\documentclass[12pt,a4paper,leqno]{amsart}

\title[Schr\"odinger type propagators and modulation spaces]{Schr\"odinger type propagators, pseudodifferential operators and modulation spaces}

\author[E. Cordero]{Elena Cordero}

\address{Dipartimento di Matematica,  Universit\`a  di Torino, Via Carlo Alberto 10, 10123 Torino, Italy}

\email{elena.cordero@unito.it}

\author[A. Tabacco]{Anita Tabacco}

\address{Dipartimento di Scienze Matematiche, Politecnico di Torino, Corso Duca degli Abruzzi 24, 10129 Torino, Italy}

\email{anita.tabacco@polito.it}

\author[P. Wahlberg]{Patrik Wahlberg}

\address{Dipartimento di Matematica,  Universit\`a  di Torino, Via Carlo Alberto 10, 10123 Torino, Italy}

\email{patrik.wahlberg@unito.it}

\subjclass[2010]{Primary 35S30; Secondary 35S05, 42B35}

\keywords{Schr\"odinger type propagators, pseudodifferential operators, modulation spaces}

\date{\today}

\usepackage{amsmath,amssymb,amsfonts,amsthm,amsopn,dsfont}
\usepackage{graphics}
%\usepackage{srcltx}
%\usepackage{setspace}
%\doublespacing
\usepackage{amscd,amsxtra}
\usepackage{latexsym}
\usepackage{enumerate}

\newtheorem{theorem}{Theorem}[section]
\newtheorem{corollary}[theorem]{Corollary}

\newtheorem{definition}[theorem]{Definition}

\newtheorem{lemma}[theorem]{Lemma}
\numberwithin{equation}{section}

\newtheorem{proposition}[theorem]{Proposition}%%
\newtheorem{remark}[theorem]{Remark}

\renewcommand{\S}{\mathcal{S}}

\newcommand{\beqa}{\begin{eqnarray*}}
\newcommand{\eeqa}{\end{eqnarray*}}

\newcommand{\field}[1]{\mathbb{#1}}
\newcommand{\bR}{\field{R}}        %  real numbers
\newcommand{\bN}{\field{N}}        %  natural numbers
        %  whole numbers
        %  complex numbers
        %  rational numbers
        %
        %

  % new list environment

                    % Greek Letters

%\def\sig{\sigma}

\def\o{\eta}

 \def\cF{\mathcal{F}}              % Calligraphic Letters
 \def\cS{\mathcal{S}}

 \def\cM{\mathcal{M}}

 \def\cW{\mathcal{W}}

\def\a{\aleph}

\def\rd{\bR^d}

\def\rdd{{\bR^{2d}}}

\def\lrd{L^2(\rd)}

\def\intrd{\int_{\rd}}

\def\R{\right)}
\def\<{\left<}
\def\>{\right>}

\def\mv1{M_v^1}

\def\phas{(x,\o )}
\def\mn{(m,n)}
\def\mn'{(m',n')}

\hyphenation{Cara-theo-do-ry}
\hyphenation{Dau-be-chies}
\hyphenation{Barg-mann}
\hyphenation{dis-tri-bu-ti-ons}
\hyphenation{pseu-do-dif-fe-ren-tial}
\hyphenation{ortho-normal}

%------ END ------

%Elena

\def\o{\eta}
\def\a{\alpha}

\def\R{\mathbb{R}}
\def\Ren{\mathbb{R}^d}
\def\Renn{\mathbb{R}^{2d}}

\def\sch{\mathcal{S}}

\def\Fur{\mathcal{F}}

\def\f{\varphi}

\def\Sn2{S_{2}(L^{2}(\Ren))}
\def\S1{S_{1}(L^{2}(\Ren))}
\def\sig00{\sigma_{0,0}}
%z\rangle ^{m-\rho|\gamma|}\}
%\langle z\
%====================

\def\la{\langle}
\def\ra{\rangle}

%====================
%weight functions

%=================

%%% Local Variables:
%%% mode: latex
%%% TeX-master: t
%%% End:

%\newcommand{\speci}{\cS _{\cC }}

%\setcounter{section}{\value{section}-1}   %fixar sektionsnumer s• att
                                         %dessa brjar med 0.

%\numberwithin{equation}{section}          %Detta gr att man f•r
                                         %formelnummer av typ
                                         %(sec.nr).

\begin{document}

%=================================================================
%=================================================================
%=================================================================
%=================================================================

\begin{abstract}
We prove continuity results for Fourier integral operators with
symbols in modulation spaces, acting between modulation spaces.
The phase functions belong to a class of nondegenerate generalized quadratic forms
that includes Schr\"odinger propagators and pseudodifferential operators.
As a byproduct we obtain a characterization of all exponents $p,q,r_1,r_2,t_1,t_2 \in [1,\infty]$ of modulation spaces such that a symbol in $M^{p,q}(\rdd)$ gives a pseudodifferential operator that is continuous from $M^{r_1,r_2}(\rd)$ into $M^{t_1,t_2}(\rd)$.
\end{abstract}

\maketitle

\section{Introduction}\label{intro}
Fourier integral operators (FIOs) represent a mathematical tool to study the behavior of the solutions to  partial
differential equations. Our type of FIOs has its origins in Quantum Mechanics: they arise naturally in the study of the
Cauchy problem for Schr\"odinger-type operators. We refer the reader to the pioneering works of Asada and Fujiwara~\cite{asada-fuji}, Cordoba and Fefferman~\cite{CF78}, and Helffer and Robert~\cite{helffer-rob1}.
This paper is concerned with the study of FIOs formally defined by
\begin{equation}\label{fio}
    Tf(x)=\intrd e^{2\pi i \Phi\phas}
    \sigma\phas
    \hat{f}(\o)d\o.
\end{equation}
The functions $\sigma$ and $\Phi$ are called symbol (or amplitude) and phase function, respectively.
Our phase functions $\Phi$, sometimes called ``tame'' \cite{CGN,CGFR12},  are real-valued, smooth functions on $\rdd$,
satisfying $\,\partial_z^\a \Phi \in L^\infty (\rdd )$ for $\a \geq 2$, and  the non-degeneracy condition
\begin{equation}\label{detcond}
   \left|{\rm det}\,
\left(\frac{\partial^2\Phi}{\partial
x_i\partial \eta_l}\Big|_{
(x,\eta)}\right)\right|\geq
\delta>0\quad \forall\,
(x,\eta)\in \R^{2d}.
\end{equation}
Basic examples are provided by quadratic forms in the variables $x,\eta\in\rd$ and the corresponding FIOs are the so called generalized metaplectic operators \cite{CGFR12,folland}.
Another well-known example is the phase $\Phi\phas=x\cdot \o$ which gives  pseudodifferential operators in the Kohn-Nirenberg form.
Note that these phase functions differ from those of FIOs arising in the solutions
of hyperbolic equations, that are positively homogeneous of degree one in
$\eta$ (see e.g.\cite{cordero-nicola-rodino,hormander71,ruzhsugimoto,seegersoggestein}).

The aim of this paper is to provide  optimal boundedness results for FIOs of the type above having rough symbols. The symbol classes that are suitable for this study reveal to be the so-called modulation spaces, introduced by Feichtinger in 1983 \cite{F1} and recalled in  Subsection \ref{2.1} below.
Modulation spaces will be employed both for  symbol spaces and  spaces on which operators act.

Sharpness results in this framework were already pursued in the papers \cite{fiomodulation1,fio1,Sjostrand1}, where symbols in the particular modulation space $M^{\infty,1}(\rdd)$ were considered. Other results in this connection are contained in \cite{concetti1,concetti2, tataru,toftconcetti1}.

The special case of pseudodifferential operators has been studied in the context of modulation spaces
by several authors, including the earlier works by
Gr\"ochenig and Heil \cite{GH99,GH04}, Labate \cite{Labate1,Labate2},
Sj\"ostrand \cite{Sjostrand1}, Tachizawa \cite{Tachizawa1}. Recent  contributions are provided by \cite{benyi,benjiKasso,corderonicola4,Czaja,sugitomita2,Toft,Toftweight}.
 For simplicity, let us first present our results in terms of pseudodifferential operators. The following sufficient conditions
 enlarge Toft's conditions \cite[Theorem 4.3]{Toft}, whereas the necessary conditions contain those in \cite[Proposition 5.3]{corderonicola4}.

\begin{theorem}\label{Charpseudo}
Assume that $1\leq p,q,r_i,t_i\leq \infty$, $i=1,2$.
Then the
pseudodifferential operator $T$, from $\cS(\rd)$ to $\cS'(\rd)$,
having symbol  $\sigma \in M^{p,q}(\R^{2d})$, extends uniquely to
a bounded operator from $\mathcal{M}^{r_1,r_2}(\R^d)$ to
$\mathcal{M}^{t_1,t_2}(\R^d)$, with the estimate
\begin{equation}\label{stimaA}
\|Tf\|_{\mathcal{M}^{t_1,t_2}} \lesssim
\|\sigma\|_{M^{p,q}}\|f\|_{\mathcal{M}^{r_1,r_2}}
\end{equation}
if and only if
\begin{equation}\label{indicitutti}
1/r_i - 1/t_i \geq 1-1/p-1/q, \quad i=1,2,
\end{equation}
\noindent
and
\begin{equation}\label{indiceq}
\quad q \leq \min(t_1,t_2,r_1',r_2').
\end{equation}
\end{theorem}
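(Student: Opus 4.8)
The plan is to establish the two implications separately: sufficiency by a Gabor‑matrix discretization, necessity by explicit examples. For sufficiency, fix a Gaussian window $g\in\cS(\rd)$ generating a Gabor frame for $L^2(\rd)$ over a lattice $\Lambda=\alpha\zd\times\beta\zd$ with $\alpha,\beta$ small enough. Since $g\in M^1$, the analysis and synthesis maps are bounded between $M^{s_1,s_2}(\rd)$ and $\ell^{s_1,s_2}(\zdd)$ for all $s_1,s_2\in[1,\infty]$, with $\|f\|_{M^{s_1,s_2}}\asymp\|(\langle f,\pi(\alpha k,\beta n)g\rangle)_{k,n}\|_{\ell^{s_1,s_2}}$, where $\ell^{s_1,s_2}$ carries $\ell^{s_1}$ on the position label $k$ and $\ell^{s_2}$ on the frequency label $n$. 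Hence it suffices to show that the Gabor matrix $A_{(k',n'),(k,n)}=\langle T_\sigma\,\pi(\alpha k,\beta n)g,\ \pi(\alpha k',\beta n')g\rangle$ is bounded from $\ell^{r_1,r_2}(\zdd)$ into $\ell^{t_1,t_2}(\zdd)$ with norm $\lesssim\|\sigma\|_{M^{p,q}(\rdd)}$; estimate \eqref{stimaA} and the unique bounded extension then follow by density (in norm when the $r_i$ are finite, in the weak‑$\ast$ sense otherwise).

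The heart of the matter is a pointwise bound on the matrix entries. Writing a Kohn‑Nirenberg operator via the Rihaczek distribution $R(\varphi,\psi)(x,\eta)=\varphi(x)\,\overline{\widehat\psi(\eta)}\,e^{-2\pi i x\cdot\eta}$ one has $\langle T_\sigma\varphi,\psi\rangle=\langle\sigma,R(\psi,\varphi)\rangle$, and $R(\pi(w)g,\pi(z)g)$ is, up to a unimodular constant, a time‑frequency shift of $\Psi:=R(g,g)\in\cS(\rdd)$; this gives, with $w=(\alpha k',\beta n')$ (output label) and $z=(\alpha k,\beta n)$ (input label), the identity $|A_{(k',n'),(k,n)}|=|V_\Psi\sigma(\alpha k',\beta n;\ \beta(n'-n),\alpha(k-k'))|$, i.e.\ the STFT of $\sigma$ is sampled at a point whose ``time'' part is $(\alpha k',\beta n)$ and whose ``frequency'' part is $(\beta(n'-n),\alpha(k-k'))$. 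This is the standard almost‑diagonalization of pseudodifferential operators, and is the $\Phi(x,\eta)=x\cdot\eta$ instance of the Gabor‑matrix estimate for the operators \eqref{fio} proved below. Since $\sigma\in M^{p,q}(\rdd)$, the array $|V_\Psi\sigma|$ sampled on this lattice is controlled by $\|\sigma\|_{M^{p,q}}$, with $\ell^p$‑summability in the $2d$‑dimensional time slot and $\ell^q$‑summability in the $2d$‑dimensional frequency slot. The desired $\ell^{r_1,r_2}\to\ell^{t_1,t_2}$ estimate thus reduces to a mixed‑norm inequality of Young‑H\"older type for this array, which one checks holds exactly under \eqref{indicitutti}--\eqref{indiceq}: the convolution (Young) gap in the difference variables $k-k'$ and $n'-n$ yields the two inequalities \eqref{indicitutti} (the $\ell^p$‑localization of $V_\Psi\sigma$ in its time variable contributing the $1/p$, the $\ell^q$‑decay in its frequency variable the $1/q$), while summing against the $\ell^p$‑localized variables $k',n$ and ordering the mixed norms correctly is admissible precisely when $q\le\min(t_1,t_2,r_1',r_2')$ (the constraints on $r_1',r_2'$ equivalently coming from the dual estimate). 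Carrying out this sharp mixed‑norm estimate --- in practice by reducing to extremal exponent configurations and interpolating --- is the main technical point.

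For necessity one tests \eqref{stimaA}. To obtain \eqref{indiceq}, take symbols with finitely supported STFT: let $V_\Phi\sigma$ be concentrated at $N$ lattice points with equal values lying at $N$ distinct ``$\sigma$-frequencies'', so $\|\sigma\|_{M^{p,q}}\asymp N^{1/q}$; by the almost‑diagonalization above, applied with $f$ a single Gabor atom, $T_\sigma f$ has $\asymp N$ significant, essentially equal Gabor coefficients whose supporting labels can be made to spread in the position variable (then $\|T_\sigma f\|_{M^{t_1,t_2}}\asymp N^{1/t_1}$) or in the frequency variable ($\asymp N^{1/t_2}$); letting $N\to\infty$ forces $q\le t_1$ and $q\le t_2$. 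Running the same argument for the adjoint $T_\sigma^*$ --- again a Kohn‑Nirenberg operator with symbol of comparable $M^{p,q}$‑norm, the passage to the adjoint symbol being a bounded change of quantization on $M^{p,q}(\rdd)$, and mapping $M^{t_1',t_2'}\to M^{r_1',r_2'}$ --- gives $q\le r_1'$ and $q\le r_2'$, completing \eqref{indiceq}. To obtain \eqref{indicitutti}, use a scaling argument: with the dilation $U_\lambda f(x)=\lambda^{d/2}f(\lambda x)$ one has $U_\lambda^{-1}T_\sigma U_\lambda=T_{\sigma^{(\lambda)}}$ with $\sigma^{(\lambda)}(x,\eta)=\sigma(x/\lambda,\lambda\eta)$; choosing $\sigma$ and $f$ to be Gaussians, applying \eqref{stimaA} with symbol $\sigma^{(\lambda)}$ to the function $U_\lambda^{-1}f$ (for which $T_{\sigma^{(\lambda)}}U_\lambda^{-1}f=U_\lambda^{-1}T_\sigma f$) gives a power‑of‑$\lambda$ inequality among explicit Gaussian modulation norms whose validity as $\lambda\to+\infty$ is exactly \eqref{indicitutti} for $i=1$ and as $\lambda\to 0^+$ exactly \eqref{indicitutti} for $i=2$. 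These examples refine those of \cite{corderonicola4}.
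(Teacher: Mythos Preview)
Your outline is sound but packaged differently from the paper. For sufficiency the paper does not argue at the Gabor-matrix level; it derives the pseudodifferential case as the specialization $\Phi(x,\eta)=x\cdot\eta$ of a FIO continuity theorem (tame phases satisfying \eqref{phasegrad}), proved by bilinear complex interpolation between three endpoints: the known $M^{\infty,1}$-symbol bound on every $\mathcal M^{r_1,r_2}$, and the elementary estimates $M^\infty\times\mathcal M^1\to\mathcal M^\infty$ and $M^1\times\mathcal M^\infty\to\mathcal M^1$ obtained from $L^\infty$ resp.\ $L^1$ control of the continuous Gabor kernel \eqref{symbstft}, followed by relaxation via the embeddings \eqref{modspaceincl1}. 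Your discrete mixed-norm inequality would be obtained by transporting exactly this interpolation scheme to sequence spaces, so the ``main technical point'' you defer is the same one; the paper's route buys a statement for a whole class of FIOs, yours stays closer to almost-diagonalization and avoids the FIO layer. For necessity of \eqref{indicitutti} your Gaussian-scaling test is essentially the paper's (it uses $\sigma_\lambda=\varphi_{\lambda/\sqrt2}\otimes\varphi_{1/\lambda}$ and $f=\varphi_\lambda$, reading off the two inequalities from the $\lambda\to0$ and $\lambda\to\infty$ asymptotics). For \eqref{indiceq} the paper takes a more concrete route than your atomic one: it tests on compactly supported chirps $h_\lambda(x)=h(x)e^{-\pi i\lambda|x|^2}$, with $\sigma_\lambda=h\otimes h_\lambda$ and $f_\lambda=\cF^{-1}\overline{h_\lambda}$, so that $Tf_\lambda$ is $\lambda$-independent while $\|\sigma_\lambda\|_{M^{p,q}}\asymp\lambda^{d(1/q-1/2)}$ and $\|f_\lambda\|_{M^{r_1,r_2}}\asymp\lambda^{d(1/r_1-1/2)}$, giving $q\le r_1'$; Weyl-calculus duality then gives $q\le t_1$, and conjugation by $\cF$ (an $M^{p,q}$-isomorphism on Weyl symbols) transfers the argument to Wiener amalgam spaces to yield $q\le r_2'$ and $q\le t_2$.

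One caveat on your atomic argument for $q\le t_1$: in the identity $|A_{(k',n'),(k,n)}|=|V_\Psi\sigma(\alpha k',\beta n;\beta(n'-n),\alpha(k-k'))|$ the \emph{time} variable of $V_\Psi\sigma$ already encodes the output position $k'$. If you fix a single time center $z$ for all symbol atoms and vary only the frequency centers $\zeta_j$, the output of a single Gabor atom $f$ can spread in $n'$ (via $\zeta_{j,1}$) but not in $k'$, because $k'$ is pinned by $z_1$; this gives $q\le t_2$ only. To force position spread you must let the time centers $z_j$ move together with the $\zeta_j$ (one per frequency), and then verify that $\|\sigma\|_{M^{p,q}}\asymp N^{1/q}$ still holds because each frequency slice of $V_\Psi\sigma$ carries a single bump. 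This works, but the bookkeeping is more delicate than ``$N$ distinct $\sigma$-frequencies'' suggests; the paper's chirp examples sidestep this by producing explicit closed-form norms.
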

This result can be seen as a characterization of pseudodifferential operators acting on modulation spaces, which completes the previous studies on this topic.

The sufficient conditions are obtained as a corollary of more general results for FIOs, contained in Theorem \ref{boundedphasegradient} below. Let us give an overview of our results in this framework.

Our main theme is to derive interpolation-theoretic consequences of the boundedness results for FIOs in \cite{fiomodulation1,fio1} and their possible sharpness.
These prior results treat symbols in the modulation space $M^{\infty,1}(\rdd)$, possibly with a spatial weight or additional constraints on the phase function, and the continuity of the corresponding FIOs acting either  on $\mathcal{M}^{r_1,r_2}(\rd)$ or from $\mathcal{M}^{r_1,r_2}(\rd)$ to $\mathcal{M}^{r_2,r_1}(\rd)$. We consider more general modulation spaces $M^{p,q}(\rdd)$ as symbol classes and studying the action from $\mathcal{M}^{r_1,r_2}(\rd)$ to $\mathcal{M}^{t_1,t_2}(\rd)$, $1\leq r_i,t_i\leq\infty$, $i=1,2$.

First we show that a symbol in $M^{\infty}(\rdd)$ gives rise to a FIO that maps $\mathcal{M}^{1}(\rd)$ into $ \mathcal{M}^{\infty}(\rd)$ continuously, and a symbol in $M^{1}(\rdd)$ gives rise to a FIO that maps $\mathcal{M}^{\infty}(\rd)$ into $\mathcal{M}^{1}(\rd)$ continuously. These results are similar to results by Concetti, Garello and Toft \cite{concetti1,concetti2,toftconcetti1}.

Using complex interpolation and the results of \cite{fiomodulation1} we then deduce continuity of FIOs with symbols in $M^{p,q}(\rdd)$ acting from $\mathcal{M}^{r_1,r_2}(\rd)$ to $\mathcal{M}^{t_1,t_2}(\rd)$, and search for the weakest possible conditions on the family of exponents $p,q,r_1,r_2,t_1,t_2 \in [1,\infty]$ that admit continuity.
If we make the additional assumption on the phase function
\begin{equation}\label{phasegrad}
\sup_{x,x',\eta\in\rd}\left| \nabla_x\Phi(x,\eta)-\nabla_x\Phi(x',\eta)\right|<\infty,
\end{equation}
then the corresponding  FIO  $T$ is continuous and satisfies \eqref{stimaA} if and only if \eqref{indicitutti} and \eqref{indiceq} hold, see Theorem \ref{boundedphasegradient} and Remark \ref{3.9}. Note that \eqref{phasegrad}
is satisfied in the special case of $\Phi\phas=x\cdot\eta$, i.e. $T$ is a pseudodifferential operator.

If we omit the assumption \eqref{phasegrad}  and study the action on  spaces $\mathcal{M}^{r_1,r_2}$, with $r_1\not=r_2$,  then the behavior of a FIO $T$ is more troublesome.
For instance, let us study the boundedness of $T$ on $\mathcal{M}^{r_1,r_2}$, with $r_1\not=r_2$.
Consider the pointwise multiplication operator  $Tf(x)=e^{\pi i |x|^2}f(x)$,
which can be seen as a FIO with phase function
$\Phi(x,\eta)=x \cdot \eta+|x|^2/2$ (that does not satisfy \eqref{phasegrad}),
and symbol $\sigma\equiv1\in
M^{\infty,1}(\rdd)$. Taking $t_i=r_i$, $i=1,2$, the conditions \eqref{indicitutti} and \eqref{indiceq} are satisfied
 but the operator $T$  is  bounded on $\mathcal{M}^{r_1,r_2}$ if and only if $r_1=r_2$, cf. \cite[Proposition
7.1]{fio1}.

Nevertheless, if we do not assume \eqref{phasegrad} we can still obtain  continuity on
$\mathcal{M}^{r_1,r_2}$ for all $r_1,r_2 \in [1,\infty]$, provided we introduce weights on the symbol spaces such that the symbols  decay faster  at infinity. Our main result in this direction is provided by Theorem \ref{main} below.

Finally, motivated by the search for fixed-time estimates for one-parameter Schr\"odinger-type propagators (see \cite[Section 4]{cordero2} and \cite[Section 5]{fiomodulation1}), we study in detail the action of a Fourier integral operator $T$  from the spaces  $\mathcal{M}^{r_1,r_2}$ into $\mathcal{M}^{r_2,r_1}$, $r_2\leq r_1$ (and analogously for Wiener amalgam spaces). We end  by discussing the sharpness of the results. This topic is detailed in Section \ref{3.2}.

\subsection*{Notation}

The Schwartz space is denoted by $\cS(\rd)$ and the tempered distributions by $\cS'(\rd)$.
The Fourier transform of $f \in \cS(\rd)$ is normalized as
$
\cF f(\eta) = \hat f(\eta) = \int_{\rd} f(x) e^{-2\pi i x \cdot \eta} dx.
$
For $s\in\R$ and $x \in \R^d$ we
set $v_s(x) = \la x \ra^s = (1+|x|^2)^{s/2}$, and $x \cdot \eta$ denotes the inner product on $\R^d$.
The notation $f \lesssim g$ means that there exists a positive constant $C>0$ such that
$f \leq C g$ (uniformly over all arguments of $f$ and $g$ where appropriate), while $f \asymp g$ means $f \lesssim g$ and $g \lesssim f$.
Translations are denoted by $T_x f(y) = f(y-x)$ and
modulations by $M_\o f(x) = e^{2 \pi i x \cdot \o} f(x)$, $x,y,\o \in \rd$, $f \in \cS(\rd)$.
The inner product on $L^2(\rd)$ is conjugate linear in the second argument and is denoted by $\la \cdot,\cdot \ra$,
which also denotes  the conjugate linear action of $\cS'$ on $\cS$.

\section{Preliminaries}\label{preliminaries}

In order to emphasize that $T$ defined by \eqref{fio} depends on $\sigma$ we sometimes write $T=T_\sigma$.
\begin{definition}\label{deffase}
A real-valued phase functions $\Phi$ is called  \emph{tame} (\cite{CGN,fiomodulation1,fio1}) provided the following three conditions are satisfied:
\begin{enumerate}[(i)]
\item $\Phi\in C^{\infty}(\rdd)$;
\item there exist constants $C_\alpha>0$ such that
\begin{equation}\label{phasedecay}
|\partial^\a \Phi(x,\eta)|\leq C_\a \quad  \forall \a \in \bN^{2d}, \quad |\a|\geq
2;
\end{equation}
\item $\Phi$ satisfies the non-degeneracy condition \eqref{detcond}.
\end{enumerate}
\end{definition}

\subsection{Modulation
spaces}\cite{F1,fe89-1,feichtinger90,
Feichtinger-grochenig89a,Feichtinger-grochenig89b,grochenig,triebel83}\label{2.1}

In order to define  modulation spaces we use the short-time Fourier transform (STFT) $V_g f$
of a tempered distribution $f\in\cS'(\rd)$ with respect
to a nonzero window $g\in\cS(\rd)$. It is defined as
$V_g f(x,\o) = \mathcal F (f \, T_x \overline{g}) (\o)$ for $x,\eta \in \rd$.
For $f \in \cS(\rd)$ we have
\begin{equation}\label{STFT}
V_g f(x,\o)=\la f,M_\o T_xg\ra= \intrd e^{-2\pi i \o \cdot
y}f(y) \, \overline{g(y-x)}\,dy, \quad x,\eta \in \R^d.
\end{equation}
The inversion formula for the STFT (see e.g. (\cite[Corollary 3.2.3]{grochenig}) reads:
If
$\|g\|_{L^2}=1$ and $f \in \cS(\R^d)$ then
\begin{equation}\label{treduetre}
f=\int_{\R^{2d}} V_g f(x,\o) \, M_\o T_x g \, dx \, d\o.
\end{equation}

The following property of the STFT \cite[Lemma
11.3.3]{grochenig} is useful when one needs to change window
function.
\begin{lemma}\label{changewind}
If $f\in\cS'(\rd)$, $g_0,g_1,\gamma\in\cS(\rd)$ and $\la \gamma,
g_1\ra\not=0$, then
$$|V_{g_0} f\phas|\leq\frac1{|\la\gamma,g_1\ra|}(|V_{g_1} f|\ast|V_{g_0}\gamma|)\phas, \quad x,\o \in \rd.
$$
\end{lemma}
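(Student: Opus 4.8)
The plan is to derive the pointwise estimate from the weak inversion formula for the STFT combined with a single explicit inner-product computation. First I would invoke the two-window reconstruction formula: since $\la\gamma,g_1\ra\neq0$, for every $f\in\cS'(\rd)$ one has, in the weak sense,
\begin{equation*}
f=\frac1{\la\gamma,g_1\ra}\intrdd V_{g_1}f(y,\xi)\,M_\xi T_y\gamma\,dy\,d\xi,
\end{equation*}
which generalizes the inversion formula \eqref{treduetre} to distinct analysis and synthesis windows (the choice $g_1=\gamma$ with $\|\gamma\|_{L^2}=1$ recovers \eqref{treduetre} exactly). For $f\in\cS(\rd)$ this is the classical identity; for $f\in\cS'(\rd)$ it is read as an equality of distributions tested against Schwartz functions, the integral converging because $V_{g_1}f$ is polynomially bounded while $\gamma\in\cS(\rd)$.

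Next I would apply $V_{g_0}$ to both sides, that is, pair $f$ with $M_\o T_x g_0$ and interchange the $\cS'$--$\cS$ pairing with the integral, obtaining
\begin{equation*}
V_{g_0}f\phas=\frac1{\la\gamma,g_1\ra}\intrdd V_{g_1}f(y,\xi)\,\la M_\xi T_y\gamma,\,M_\o T_x g_0\ra\,dy\,d\xi.
\end{equation*}
The algebraic core is the computation of $\la M_\xi T_y\gamma,M_\o T_x g_0\ra$. Writing out the defining integral and performing the change of variables $t\mapsto t+y$ shows, after factoring out the part depending only on $y$, that
\begin{equation*}
\la M_\xi T_y\gamma,\,M_\o T_x g_0\ra=e^{2\pi i(\xi-\o)\cdot y}\,V_{g_0}\gamma(x-y,\o-\xi);
\end{equation*}
the key point is that the covariance of the time-frequency shifts reproduces precisely the STFT of $\gamma$ with window $g_0$, evaluated at the difference of the arguments, up to a unimodular phase.

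Substituting this identity and then taking absolute values, the exponential has modulus one and disappears, giving
\begin{equation*}
|V_{g_0}f\phas|\leq\frac1{|\la\gamma,g_1\ra|}\intrdd|V_{g_1}f(y,\xi)|\,|V_{g_0}\gamma(x-y,\o-\xi)|\,dy\,d\xi,
\end{equation*}
and the right-hand side is exactly $\frac1{|\la\gamma,g_1\ra|}(|V_{g_1}f|\ast|V_{g_0}\gamma|)\phas$, which is the assertion.

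I expect the main obstacle to be the rigorous justification of the first two steps in the distributional category, namely establishing the two-window inversion formula for $f\in\cS'(\rd)$ and validating the interchange of the pairing with the vector-valued integral. The cleanest route is to first prove the identity for $f\in\cS(\rd)$, where Fubini and the classical inversion formula apply directly, and then pass to $f\in\cS'(\rd)$ by a duality/density argument, using that $g_0,g_1,\gamma\in\cS(\rd)$ keep all the relevant Gabor coefficients rapidly enough controlled. Once these analytic points are secured, the remaining content—the phase-factor identity for the inner product—is an elementary change of variables.
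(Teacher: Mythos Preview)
Your argument is correct and is exactly the standard proof of this inequality; the paper does not give its own proof but simply quotes the result from \cite[Lemma 11.3.3]{grochenig}, whose proof proceeds precisely along the lines you outline (two-window inversion, pairing with $M_\o T_x g_0$, the covariance identity for $\la M_\xi T_y\gamma,M_\o T_x g_0\ra$, then absolute values). There is nothing to add.
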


In order to define the weighted modulation spaces of the symbols, we first introduce the class $\mathcal{M}_{v_{s}}(\rdd)$, $s\geq0$, consisting of weights $m$  that are  positive measurable functions on $\rdd$ and satisfy $m(x+y) \lesssim v_s(x) m(y)$, $x,y \in \rdd$.
It follows that $v_t$ is $v_s$-moderate for all $t \in \mathbb R$ such that $|t| \leq s$.
In particular, we shall consider
the class of weight functions on $\rdd$ given by $v_{s_1,s_2}\phas=\la x\ra^{s_1}\la
\o\ra^{s_2}$, $s_1, s_2 \in\R$, $x, \eta \in \rd$, and
$m=v_{s_1,s_2}\otimes 1$ on $\R^{4d}$.

Given a window function $g\in\sch(\Ren) \setminus \{ 0 \}$,
$m \in \mathcal{M}_{v_{s}}(\rdd)$ for some $s \geq 0$,
and $1\leq p,q\leq \infty$, the {\it modulation space} $M^{p,q}_m(\rd)$ consists of all tempered
distributions $f\in\sch'(\Ren)$ such that $V_gf\in L^{p,q}_m(\rdd)$ (weighted mixed-norm Lebesgue space). The norm on $M^{p,q}_m$ is
$$
\|f\|_{M^{p,q}_m}=\|V_gf\|_{L^{p,q}_m}=\left(\int_{\Ren}
  \left(\int_{\Ren}|V_gf(x,\o)|^pm(x,\o)^p\,
    dx\right)^{q/p}d\o\right)^{1/q}  \,
$$
(with natural modifications when $p=\infty$ or $q=\infty$). If $p=q$, we
write $M^p_m$ instead of $M^{p,p}_m$, and if $m\equiv 1$ on
$\Renn$, then we write $M^{p,q}$ and $M^p$ for $M^{p,q}_m$ and
$M^{p,p}_m$, respectively. The space  $M^{p,q}_m (\rd)$ is a Banach space
whose definition is independent of the choice of the window $g$,
in the sense that different  nonzero window functions yield equivalent  norms.
The modulation space $M^{\infty,1}$ is also called Sj\"ostrand's class \cite{Sjostrand1}.

The closure of $\cS(\rd)$ in the  $M^{p,q}_m$-norm is denoted $\mathcal{M}^{p,q}_m(\rd)$.
Then $\mathcal{M}^{p,q}_m(\rd) \subseteq M^{p,q}_m(\rd)$, and $\mathcal{M}^{p,q}_m (\rd) = M^{p,q}_m (\rd)$
provided $p<\infty$ and $q<\infty$.
For any $p,q \in [1,\infty]$ and any $m \in \mathcal{M}_{v_{s}}(\rdd)$, $s \geq 0$, we have:
The inner product $\la \cdot,\cdot \ra$ on $\cS (\rd) \times \cS (\rd)$ extends to a continuous sesquilinear map
$M^{p,q}_m (\rd) \times M^{p',q'}_{1/m} (\rd) \rightarrow \mathbb C$.
Here and elsewhere the conjugate exponent $p'$ of $p \in [1,\infty]$ is defined by $1/p+1/p'=1$.
If
\begin{equation}\nonumber
\| f \| = \sup | \la f,g \ra |,
\end{equation}
with supremum taken over all $g \in \cS(\rd)$ such that $\| g \|_{M^{p',q'}_{1/m}} \leq 1$,
then $\| \cdot \|$ and $\| \cdot \|_{M^{p,q}_m}$ are equivalent norms (cf. \cite[Proposition 1.4 (3)]{Toftweight}).
This result will be invoked using the phrase ``by duality''.

Suppose $m_1, m_2 \in \mathcal{M}_{v_{s}}(\rdd)$ for some $s \geq 0$. Then we have the embeddings
\begin{equation}\label{modspaceincl1}
\begin{aligned}
& \cS (\rd) \subseteq M_{m_1}^{p_1,q_1} (\rd) \subseteq M_{m_2}^{p_2,q_2} (\rd) \subseteq
\sch'(\rd), \\
& p_1 \leq p_2, \quad q_1 \leq q_2, \quad m_2 \lesssim
m_1.
\end{aligned}
\end{equation}

Modulation spaces are closed under complex interpolation as follows (cf. \cite{fei83b}).
If $p_j,q_j \in [1,\infty]$, $m_j \in \mathcal{M}_{v_{s}}(\rdd)$, $j=1,2$, for some $s \geq 0$,
$0 < \theta < 1$,
\begin{align*}
\frac1{p} = \frac{1-\theta}{p_1} + \frac{\theta}{p_2}, \quad
\frac1{q} = \frac{1-\theta}{q_1} + \frac{\theta}{q_2}, \quad
m=m_1^{1-\theta} m_2^\theta,
\end{align*}
then
\begin{equation}\label{interpolation}
\left( \cM_{m_1}^{p_1,q_1} (\rd), \cM_{m_2}^{p_2,q_2} (\rd) \right)_{[\theta]} = \cM_{m}^{p,q} (\rd).
\end{equation}

We need the following result concerning the modulation space norm of distributions
of compact support in time or in frequency (cf., e.g.,
\cite{concetti1,fe89-1,kasso07,A11}).
\begin{lemma}\label{lloc} Let $1\leq p,q\leq
\infty.$\\
(i) For every $u\in \mathcal{S}'(\rd)$, supported in a compact set
$K \subseteq \rd$, we have $u\in M^{p,q}\Leftrightarrow u\in \Fur
L^q$, and
\begin{equation}\label{loc}
C_K^{-1} \|u\|_{M^{p,q}}\leq \|u\|_{\cF L^q}\leq C_K
\|u\|_{M^{p,q}},
\end{equation}
where $C_K>0$ depends only on
 $K$.\\
 (ii) For every $u\in \mathcal{S}'(\rd)$,
whose Fourier transform is supported in a compact set $K \subseteq
\rd$, we have $u\in M^{p,q}\Leftrightarrow u\in L^p$, and
\begin{equation}\label{loc2}
C_K^{-1} \|u\|_{M^{p,q}}\leq\|u\|_{ L^p}\leq C_K\|u\|_{M^{p,q}},
\end{equation}
where $C_K>0$ depends only on
 $K$.\\
\end{lemma}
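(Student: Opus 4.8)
The plan is to prove part (i) and then obtain part (ii) from it by conjugating with the Fourier transform. The starting point for (i) is the observation that, for a distribution $u$ supported in a fixed compact set $K$, one may replace the window $g \in \cS(\rd)$ in the definition of the modulation space norm by a compactly supported auxiliary function without changing the norm up to equivalence — this is exactly the content of Lemma \ref{changewind} on change of windows. So first I would fix $\chi \in C_0^\infty(\rd)$ with $\chi \equiv 1$ on a neighborhood of $K$, and note that since $\supp u \subseteq K$ we have $u \, T_x \overline{\chi} = u$ whenever $\supp T_x \chi \supseteq K$, i.e. for $x$ in a fixed compact set $K'$, while $V_\chi u(x,\o) = 0$ outside a slightly larger compact set in the $x$-variable (because $T_x \chi$ and $u$ have disjoint supports there). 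Hence $V_\chi u(x,\o) = \widehat{u}(\o)$ for $x$ in the relevant compact set and $V_\chi u(x,\o)$ is supported in $K' \times \rd$.

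The second step is then a direct computation of the mixed-norm Lebesgue norm of $V_\chi u$. Since the $x$-support of $V_\chi u$ is contained in the fixed compact set $K'$, the outer $L^p_x$ integral over that set contributes only a constant depending on $|K'|$ (hence on $K$), and for $x \in K'$ one has $|V_\chi u(x,\o)| = |\mathcal{F}(u \, T_x\overline\chi)(\o)|$, which is $|\widehat u(\o)|$ when $T_x\chi \equiv 1$ on $K$ and is in general controlled by $|\widehat u| * |\widehat{T_x\chi}|$ pointwise in $\o$; taking the $L^q_\o$ norm and using that $\widehat{T_x \chi} = M_{-x}\widehat\chi \in L^1$ with norm independent of $x \in K'$ gives the two-sided estimate
\begin{equation}\nonumber
C_K^{-1}\|\widehat u\|_{L^q} \leq \|V_\chi u\|_{L^{p,q}} \leq C_K \|\widehat u\|_{L^q}.
\end{equation}
Combining with window-independence of the modulation norm (via Lemma \ref{changewind}, which handles the passage between the non-smooth/compactly supported window $\chi$ and a genuine Schwartz window $g$) yields \eqref{loc}, and in particular shows $u \in M^{p,q} \Leftrightarrow u \in \cF L^q \Leftrightarrow \widehat u \in L^q$.

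For part (ii), I would apply part (i) to $\widehat u$, which has compact support $K$: the identity $V_g f(x,\o) = e^{-2\pi i x\cdot\o} \overline{V_{\widehat g}\widehat f(\o,-x)}$ relating the STFT of $f$ to that of $\widehat f$ shows that $\|u\|_{M^{p,q}} \asymp \|\widehat u\|_{M^{q,p}}$ (with $p$ and $q$ exchanged), so (i) applied to $\widehat u$ gives $\|\widehat u\|_{M^{q,p}} \asymp \|\widehat{\widehat u}\|_{L^p} = \|u(-\cdot)\|_{L^p} = \|u\|_{L^p}$, which is \eqref{loc2}. The main obstacle, and the only place requiring genuine care, is the bookkeeping of the various compact sets and the justification that the window may be taken compactly supported: one must check that the constants produced depend only on $K$ (through the chosen $\chi$ and the size of its support), and that the manipulations $u\, T_x\overline\chi = u$ and the vanishing of $V_\chi u$ off a compact set are valid in the sense of tempered distributions. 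Everything else is the routine interplay between convolution, the Fourier transform, and the mixed-norm structure.
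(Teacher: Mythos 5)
The paper itself does not prove this lemma (it is quoted from the literature), so I can only assess your argument on its own terms. Your proof of part (i) is essentially correct and is the standard one: with $\chi\in C_0^\infty(\rd)$ equal to $1$ near $K$ (note $\chi$ is a Schwartz function, so window independence of the $M^{p,q}$ norm applies directly and Lemma \ref{changewind} is not even needed), $V_\chi u(x,\cdot)=\hat u * \widehat{T_x\bar\chi}$ vanishes for $x$ outside a compact set and equals $\hat u$ for $x$ in a set of positive measure, and the two-sided estimate follows from Young's inequality. Only be careful to say ``$T_x\chi\equiv 1$ on a neighborhood of $K$'' rather than ``$\supp T_x\chi\supseteq K$'' for the identity $u\,T_x\bar\chi=u$.

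Part (ii), however, has a genuine gap. The identity $|V_g f(x,\omega)|=|V_{\hat g}\hat f(\omega,-x)|$ does \emph{not} give $\|u\|_{M^{p,q}}\asymp\|\hat u\|_{M^{q,p}}$. Writing $G=V_{\hat g}\hat u$, the quantity $\|u\|_{M^{p,q}}$ is the $L^p$ norm of $G$ in its second variable followed by the $L^q$ norm in its first, whereas $\|\hat u\|_{M^{q,p}}$ takes the $L^q$ norm in the first variable first and the $L^p$ norm in the second afterwards. Mixed norms in opposite orders are not equivalent in general (Minkowski gives only one inequality, depending on whether $p\le q$ or $q\le p$), and restricting the first variable to a compact set does not restore equivalence. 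What the identity actually yields is $\|u\|_{M^{p,q}}=\|\hat u\|_{W(\cF L^p,L^q)}$, as the paper itself records, and $W(\cF L^p,L^q)\neq M^{q,p}$ unless $p=q$. A posteriori both quantities are comparable to $\|u\|_{L^p}$ for the particular $u$ in (ii), but that is precisely the content of the lemma, so your reduction is circular. The fix is to prove (ii) directly, in parallel with (i): choose $g$ with $\hat g\in C_0^\infty(\rd)$ and $g(0)\neq 0$, and set $g^*(t)=\overline{g(-t)}$, so that $|V_g u(x,\omega)|=|(u*M_\omega g^*)(x)|$ vanishes for $\omega$ outside the compact set $K''=K+\supp\widehat{g^*}$. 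The uniform-in-$\omega$ bound $|V_g u(x,\omega)|\le (|u|*|g^*|)(x)$ together with Young's inequality and $|K''|<\infty$ gives $\|u\|_{M^{p,q}}\lesssim \|u\|_{L^p}$. Conversely, since $\int_{\rd}\widehat{g^*}(\xi-\omega)\,d\omega=\overline{g(0)}$ for every $\xi$, one has
\begin{equation}\nonumber
\overline{g(0)}\,u=\int_{K''} u*M_\omega g^*\,d\omega ,
\end{equation}
and Minkowski's integral inequality followed by H\"older on $K''$ gives
\begin{equation}\nonumber
|g(0)|\,\|u\|_{L^p}\le \int_{K''}\|V_g u(\cdot,\omega)\|_{L^p}\,d\omega
\le |K''|^{1-1/q}\,\|u\|_{M^{p,q}} .
\end{equation}
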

We refer to Gr\"ochenig's book \cite{grochenig} for further properties of the modulation spaces.

Parseval's formula gives $|V_g f(x,\o)|=|V_{\hat g} \hat f(\o,-x)| = |\mathcal F (\hat f \, T_\o \overline{\hat g}) (-x)|$ for $f \in \cS'(\rd)$ and $g \in \cS(\rd)$.
Hence
$$
\| f \|_{{M}^{p,q}} = \left( \int_{\rd} \| \hat f \ T_{\o} \overline{\hat g} \|_{\cF L^p}^q \ d \o \right)^{1/q}
= \| \hat f \|_{W(\cF L^p,L^q)}.
$$
Here $W(\cF L^p,L^q)(\rd)$ are  particular cases of  \emph{Wiener amalgam spaces} with local component $\cF L^p(\rd)$ and global component $L^q(\rd)$. It follows that we have $\cF ({M}^{p,q})=W(\cF L^p,L^q)$.
The closure of $\cS(\rd)$ in the  $W(\cF L^p,L^q)$-norm is denoted $\cW(\cF L^p,L^q)$.
For more information on Wiener amalgam spaces we refer to \cite{fei83a,fei83b,fournier-stewart85,Heil03}.

We will need the modulation space norm of a complex Gaussian.

\begin{lemma}\label{lemm2}
For $a>0$, $b\in\R$,  set
$h_{a+ib}(x)=e^{-\pi(a+ib)|x|^2}$.
Then we have for all $1\leq p,q\leq\infty$
\begin{equation}\label{lem2}
\|h_{a+ib}\|_{M^{p,q}}  \asymp
\frac{\left((a+1)^2+b^2\right)^{
\frac{d}{2}\left(\frac{1}{p}-\frac{1}{2}
\right)}}{a^\frac{d}{2q}\left(a(a+1)+b^2\right)^{\frac{d}{2}\left(\frac{1}{p}-\frac{1}{q}\right)}}.
\end{equation}
\end{lemma}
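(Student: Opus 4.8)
The plan is to compute the STFT of the complex Gaussian $h_{a+ib}$ against a Gaussian window explicitly, and then to estimate the resulting mixed-norm integral. Since the modulation space norm is independent of the window (up to equivalence), I would choose the standard Gaussian $g(x) = h_1(x) = e^{-\pi|x|^2}$ as window. By a well-known formula for the STFT of a Gaussian (which follows from computing a Gaussian integral), $V_g h_{a+ib}(x,\o)$ is, up to a constant and a chirp of modulus one, a Gaussian of the form $e^{-\pi Q(x,\o)}$ where $Q$ is a positive-definite quadratic form in $(x,\o)\in\rdd$ whose coefficients depend on $a$ and $b$. Concretely one finds that $|V_g h_{a+ib}(x,\o)|$ has the shape $c(a,b)\, e^{-\pi\left( \alpha |x|^2 + 2\beta\, x\cdot\o + \gamma |\o|^2\right)}$ for suitable $\alpha,\gamma>0$, $\alpha\gamma - \beta^2>0$, and a prefactor $c(a,b)$ coming from the normalization of the Gaussian integral; tracking the algebra, $\alpha = \frac{a(a+1)+b^2}{(a+1)^2+b^2}$, $\gamma = \frac{1}{(a+1)^2 + b^2}$ and $c(a,b) = a^{-d/2}\bigl((a+1)^2+b^2\bigr)^{-d/2}\cdots$ — but the precise constants are exactly what the final formula records, so this is the routine bookkeeping.

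Once $|V_g h_{a+ib}(x,\o)|$ is written as a constant times $e^{-\pi(\alpha|x|^2 + 2\beta x\cdot\o + \gamma|\o|^2)}$, the next step is to carry out the mixed $L^{p,q}$ integration. First integrate in $x$: completing the square in $x$ gives $\alpha|x|^2 + 2\beta x\cdot\o = \alpha|x + (\beta/\alpha)\o|^2 - (\beta^2/\alpha)|\o|^2$, so the inner $L^p_x$ norm of $e^{-\pi p(\cdots)}$ is $\alpha^{-d/(2p)}$ (a constant times a scaling factor, from $\int_{\rd} e^{-\pi p \alpha |y|^2}\,dy = (p\alpha)^{-d/2}$ up to the $p$-dependent constant absorbed into $\asymp$) times $e^{\pi(\beta^2/\alpha)|\o|^2} e^{-\pi\gamma|\o|^2} = e^{-\pi(\gamma - \beta^2/\alpha)|\o|^2}$. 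The exponent $\gamma - \beta^2/\alpha = (\alpha\gamma-\beta^2)/\alpha$ is positive by positive-definiteness. Then the outer $L^q_\o$ norm contributes $\bigl((\alpha\gamma-\beta^2)/\alpha\bigr)^{-d/(2q)}$, again up to a $q$-dependent constant. Multiplying the three pieces — the prefactor $c(a,b)$, the factor $\alpha^{-d/(2p)}$, and the factor $\bigl((\alpha\gamma-\beta^2)/\alpha\bigr)^{-d/(2q)}$ — and simplifying using $\alpha\gamma - \beta^2 = a/\bigl((a+1)^2+b^2\bigr)$ (the determinant of $Q$), one should recover precisely the right-hand side of \eqref{lem2}. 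The cases $p=\infty$ and/or $q=\infty$ follow by the usual modification (the $L^\infty$ norm of a Gaussian is its maximum value, i.e. the prefactor, with no scaling factor), which is consistent with setting $1/p=0$ or $1/q=0$ in the final exponents.

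The main obstacle is purely computational: correctly identifying the quadratic form $Q$ and its prefactor $c(a,b)$ from the Gaussian integral defining $V_g h_{a+ib}$, and then simplifying the product of the three scaling factors into the compact closed form displayed in \eqref{lem2}. There is no conceptual difficulty — the window-independence of the modulation norm, the explicit Gaussian STFT formula, and the elementary $L^p$ norm of a Gaussian are all standard — but keeping track of the $a,b$-dependence through the completion of squares and the final algebraic simplification requires care. One convenient sanity check along the way is the case $b=0$, $p=q=2$, where \eqref{lem2} must reduce to $\|h_a\|_{L^2} \asymp a^{-d/4}$, which is immediate from direct integration; another is the Feichtinger-algebra case $p=q=1$, for which the formula should match known estimates for $\|h_{a+ib}\|_{M^1}$.
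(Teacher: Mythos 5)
Your proposal is correct in substance but takes a genuinely different route from the paper. The paper's proof is a two-line reduction: it observes that $\widehat{h_{a+ib}}=G_{a+ib}$ with $G_{a+ib}(x)=(a+ib)^{-d/2}e^{-\pi|x|^2/(a+ib)}$, invokes the identity $\cF(M^{p,q})=W(\cF L^p,L^q)$, and then simply cites \cite[Lemma 2.9]{fiomodulation1} for the $W(\cF L^p,L^q)$-norm of $G_{a+ib}$. You instead carry out the underlying Gaussian computation from scratch: compute $V_g h_{a+ib}$ against the Gaussian window, read off the positive quadratic form, and integrate in the mixed norm. I checked your assembly: with $c=(1+a)+ib$ one gets $|V_gh_{a+ib}(x,\o)|=|c|^{-d/2}e^{-\pi(\al|x|^2+2\be\, x\cdot\o+\ga|\o|^2)}$ with $\al=\frac{a(a+1)+b^2}{(a+1)^2+b^2}$, $\be=\frac{b}{(a+1)^2+b^2}$, $\ga=\frac{1+a}{(a+1)^2+b^2}$, $\al\ga-\be^2=\frac{a}{(a+1)^2+b^2}$, and the product $|c|^{-d/2}\al^{-d/(2p)}\bigl((\al\ga-\be^2)/\al\bigr)^{-d/(2q)}$ does simplify exactly to the right-hand side of \eqref{lem2}, so your method closes. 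Two of your intermediate constants are slightly off as stated --- $\ga$ should carry the factor $(1+a)$ in the numerator, and the prefactor is $\bigl((a+1)^2+b^2\bigr)^{-d/4}$, not the expression you wrote --- but you flagged these as bookkeeping and the determinant identity you actually use in the final step is the correct one. Your approach buys a self-contained proof (essentially reproving the cited Lemma 2.9 of \cite{fiomodulation1}); the paper's buys brevity at the cost of an external dependence.
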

\begin{proof}
For $G_{a+ib}(x)=(a+ib)^{-d/2}e^{-\frac{\pi|x|^2}{a+ib}}$ we have by \cite[Lemma 2.9]{fiomodulation1}
\begin{equation}\nonumber
\| G_{a+ib} \|_{W(\Fur
L^p,L^q)}\asymp
\frac{\left((a+1)^2+b^2\right)^{
\frac{d}{2}\left(\frac{1}{p}-\frac{1}{2}
\right)}}{a^\frac{d}{2q}\left(a(a+1)+b^2\right)^{\frac{d}{2}\left(\frac{1}{p}-\frac{1}{q}\right)}},
\end{equation}
for all $1\leq p,q \leq\infty$.
Thus we obtain from $\cF ({M}^{p,q})=W(\cF
L^p,L^q)$
$$
\|h_{a+ib}\|_{M^{p,q}}
= \|\widehat{h_{a+ib}}\|_{W(\cF L^p,L^q)}
= \| G_{a+ib} \|_{W(\cF L^p,L^q)}.
$$
\end{proof}

In particular we recover Toft's result \cite[Lemma 1.8]{Toft}.
If $\varphi(x) = e^{- \pi |x|^2}$ and $\varphi_\lambda (x) = \varphi(\lambda x)$ then
\begin{equation}\label{gaussdilmod}
\| \varphi_\lambda \|_{M^{p,q}} \asymp \lambda^{-d/p} (1+\lambda)^{d(1/p+1/q-1)}, \quad \lambda>0.
\end{equation}

\subsection{Continuity of Fourier integral operators on modulation spaces}

Here we recollect and add comments on the results on Fourier integral operators and modulation spaces upon which the results in this paper build.

Assume that the phase function $\Phi$ is tame and satisfies the  condition \eqref{phasegrad}.
Then a symbol that belongs to  Sj\"ostrand's class $M^{\infty,1}(\rdd)$
gives rise to an operator that is continuous on $\mathcal{M}^{r_1,r_2}(\rd)$, for every $1\leq
r_1,r_2\leq\infty$.
More precisely, the following \cite[Theorem 1.1]{fiomodulation1} holds.

\begin{theorem}\label{cont} Consider a
tame phase function $\Phi$ satisfying \eqref{phasegrad}, and a symbol $\sigma\in M^{\infty,1}(\rdd)$.
Then the corresponding FIO \ $T$ extends to a
bounded operator on $\mathcal{M}^{r_1,r_2}(\rd)$, for every $1\leq
r_1,r_2\leq\infty$, with the estimate
\begin{equation}\nonumber
\|Tf\|_{\mathcal{M}^{r_1,r_2}}\lesssim\|\sigma\|_{M^{\infty,1}}\|f\|_{\mathcal{M}^{r_1,r_2}}.
\end{equation}
\end{theorem}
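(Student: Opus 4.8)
The plan is to linearize $T$ at the level of the short-time Fourier transform and reduce the statement to an $L^1$-type bound for the \emph{Gabor matrix} of $T$. Fix a nonzero $g\in\cS(\rd)$ with $\|g\|_{L^2}=1$ (the choice is immaterial by window-independence of the modulation norm together with Lemma~\ref{changewind}), and write $\pi(z)=M_{z_2}T_{z_1}$ for $z=(z_1,z_2)\in\rdd$, so $V_gf(z)=\langle f,\pi(z)g\rangle$. After a routine regularization that reduces us to $\sigma\in\cS(\rdd)$ while keeping $\|\sigma\|_{M^{\infty,1}}$ under control, the inversion formula \eqref{treduetre} gives, for $f\in\cS(\rd)$, the identity $f=\int_{\rdd}V_gf(z)\,\pi(z)g\,dz$; applying $T$ and pairing with $\pi(w)g$ yields
\[
V_g(Tf)(w)=\int_{\rdd}K(w,z)\,V_gf(z)\,dz,\qquad K(w,z):=\langle T\pi(z)g,\pi(w)g\rangle .
\]
Thus it suffices to show that the integral operator $\mathbf{K}$ with kernel $K$ is bounded on the mixed-norm Lebesgue space $L^{r_1,r_2}(\rdd)$ with $\|\mathbf{K}\|\lesssim\|\sigma\|_{M^{\infty,1}}$; the resulting inequality $\|Tf\|_{M^{r_1,r_2}}\lesssim\|\sigma\|_{M^{\infty,1}}\|f\|_{M^{r_1,r_2}}$ for $f\in\cS(\rd)$ then extends to $\mathcal M^{r_1,r_2}(\rd)$ by density, and the regularization is removed by lower semicontinuity of the $M^{r_1,r_2}$-norm under $\cS'$-convergence.

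The heart of the argument is the pointwise bound
\[
|K(w,z)|\le H(w-\chi(z)),\qquad z,w\in\rdd,
\]
where $\chi:\rdd\to\rdd$ is the canonical transformation attached to $\Phi$ — it sends $(y,\eta)$ to $(x,\nabla_x\Phi(x,\eta))$ with $x$ determined by $\nabla_\eta\Phi(x,\eta)=y$, and it is a globally bi-Lipschitz symplectomorphism because $\partial^\alpha\Phi$ is bounded for $|\alpha|\ge2$ (tameness) while $(\partial^2_{x\eta}\Phi)^{-1}$ is bounded by \eqref{detcond} — and $H\in L^1(\rdd)$ with $\|H\|_{L^1}\lesssim\|\sigma\|_{M^{\infty,1}}$. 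To prove this I would expand the symbol through its own STFT with a Gaussian-type window $\Psi\in\cS(\rdd)$, writing $\sigma$ as a superposition of time-frequency shifts $\pi(\zeta)\Psi$ with the $\zeta$-superposition absolutely summable against $\sup_u|V_\Psi\sigma(u,\cdot)|$, whose integral is $\|\sigma\|_{M^{\infty,1}}$. For each frozen $\zeta$, $\langle T_{\pi(\zeta)\Psi}\,\pi(z)g,\pi(w)g\rangle$ is an absolutely convergent oscillatory integral in $\eta$; a non-stationary phase analysis — integration by parts against $\Phi(x,\eta)$ minus its linear part, using the bounded higher derivatives of $\Phi$ and the non-degeneracy \eqref{detcond} to isolate the unique critical point, which is precisely the graph point of $\chi$ — dominates it by a Schwartz function of $w-\chi(z)$, translated and modulated in a controlled way by $\zeta$. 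Integrating in $\zeta$ against $\sup_u|V_\Psi\sigma(u,\cdot)|$ collapses everything to a single $H(w-\chi(z))$ with $H\in L^1$ and $\|H\|_{L^1}\lesssim\|\sigma\|_{M^{\infty,1}}$. I expect this to be the main obstacle: it couples the oscillatory-integral analysis of the tame phase with the treatment of the non-smooth symbol, and it must retain enough precise information about $\chi$ for the final step.

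It remains to pass from the kernel bound to boundedness on $L^{r_1,r_2}(\rdd)$, and this is exactly where \eqref{phasegrad} enters. Substituting $z=\chi^{-1}(\zeta)$ (Jacobian identically $1$, since $\chi$ is symplectic), $\mathbf{K}$ is dominated in absolute value by $F\mapsto H*(F\circ\chi^{-1})$; convolution with the $L^1$ function $H$ is bounded on every mixed-norm space $L^{r_1,r_2}(\rdd)$ by Minkowski's integral inequality, so everything reduces to the boundedness of composition with $\chi^{-1}$ on $L^{r_1,r_2}(\rdd)$. Writing $\chi(y,\eta)=(\chi_1(y,\eta),\chi_2(y,\eta))$ and inserting $x=0$ in $\chi_2(y,\eta)=\nabla_x\Phi(x(y,\eta),\eta)$, assumption \eqref{phasegrad} shows $\chi_2(y,\eta)=\tau(\eta)+b(y,\eta)$ with $\tau(\eta)=\nabla_x\Phi(0,\eta)$ a bi-Lipschitz self-map of $\rd$ (its Jacobian $\partial^2_{x\eta}\Phi(0,\eta)$ is bounded with determinant $\ge\delta$) and $b$ bounded — that is, the output frequency is controlled by the input frequency up to a bounded error, so $\chi$ is ``asymptotically block upper triangular'' (in the quadratic model \eqref{phasegrad} kills the $\partial^2_{xx}\Phi$-block and $\chi$ becomes genuinely block upper triangular, a case handled by a plain iterated change of variables). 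In general, computing the iterated $L^{r_1,r_2}$-norm of $F\circ\chi^{-1}$ — changing variables in the inner (position) slice, then in the outer frequency variable, and absorbing the bounded off-diagonal shift $b$ by bounding a local supremum in $\eta$ through a maximal function (using $(M(|F|^s))^{1/s}$ with $s<1$ to cover all $r_1\ge1$) — gives $\|F\circ\chi^{-1}\|_{L^{r_1,r_2}}\lesssim\|F\|_{L^{r_1,r_2}}$ for all $r_1,r_2\in[1,\infty]$. Combining, $\|\mathbf{K}\|_{L^{r_1,r_2}\to L^{r_1,r_2}}\lesssim\|H\|_{L^1}\lesssim\|\sigma\|_{M^{\infty,1}}$, which together with the reduction of the first paragraph yields the theorem.
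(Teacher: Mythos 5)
The paper does not actually prove Theorem \ref{cont}: it is quoted verbatim from \cite[Theorem 1.1]{fiomodulation1}, so your argument must be judged on its own merits. Your first two steps are sound and do reproduce the skeleton of the cited proof: the reduction $V_g(Tf)(w)=\int K(w,z)V_gf(z)\,dz$ with $K(w,z)=\la T\pi(z)g,\pi(w)g\ra$, and the $L^1$-envelope bound $|K(w,z)|\le H(w-\chi(z))$ with $\|H\|_{L^1}\lesssim\|\sigma\|_{M^{\infty,1}}$, which is exactly what \eqref{symbstft} combined with Proposition \ref{proposition4} delivers (up to the bi-Lipschitz identification of $(\o'-\nabla_x\Phi(x',\o),\,x-\nabla_\eta\Phi(x',\o))$ with $w-\chi(z)$).

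The gap is in your last step. After dominating $|\mathbf{K}F|\le H*(|F|\circ\chi^{-1})$ you apply Young's inequality and then claim $\|F\circ\chi^{-1}\|_{L^{r_1,r_2}}\lesssim\|F\|_{L^{r_1,r_2}}$ for all $r_1,r_2\in[1,\infty]$. This is false, even under \eqref{phasegrad}. Take $d=1$ and $\Phi(x,\eta)=x\eta+\psi(x)$ with all derivatives of $\psi$ of order $\ge 1$ bounded; this phase is tame, non-degenerate and satisfies \eqref{phasegrad}, and $\chi(y,\eta)=(y,\eta+\psi'(y))$, i.e.\ $\tau=\mathrm{id}$ and $b(y,\eta)=\psi'(y)$ bounded. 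Choose $\psi'$ with $\psi'(y)=(y-a_k)/N_k$ on disjoint intervals $I_k=[a_k,a_k+N_k]$, $N_k\to\infty$. For $F_k=\mathbf{1}_{I_k\times[0,1/N_k]}$ one has $\|F_k\|_{L^{\infty,1}}=1/N_k$, whereas $F_k\circ\chi^{-1}(x,\xi)=F_k(x,\xi-\psi'(x))$ has $\sup_x F_k\circ\chi^{-1}(x,\xi)=1$ for every $\xi\in[0,1]$, so $\|F_k\circ\chi^{-1}\|_{L^{\infty,1}}\ge 1$: composition with $\chi^{-1}$ is unbounded on $L^{\infty,1}(\R^2)$. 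Since the theorem itself is true for this phase ($T$ is multiplication by $e^{2\pi i\psi}$, with $\sigma\equiv 1$), your intermediate claim is strictly stronger than the statement and cannot be repaired. The auxiliary device you invoke — dominating a local supremum by $(M(|F|^s))^{1/s}$ — also fails, since a supremum over a unit ball is not controlled by an average over a unit ball (test on the indicator of a tiny ball). The decoupling of the convolution from the change of variables is exactly what loses the theorem: one must estimate the Schur kernel $H(\o'-\nabla_x\Phi(x',\o),\,x-\nabla_\eta\Phi(x',\o))$ directly in the iterated $L^{r_1,r_2}$ norm, performing the inner position integrations first so that the $L^1$ averaging coming from $H$ acts before the outer frequency norm is taken and absorbs the bounded frequency displacement guaranteed by \eqref{phasegrad}. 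This is how the proof of \cite[Theorem 1.1]{fiomodulation1} proceeds.
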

Note that the pseudodifferential operator phase function $\Phi(x,\eta) = x \cdot \eta$ satisfies the assumptions
of Theorem \ref{cont}.
If we omit the assumption \eqref{phasegrad} we can still get continuity on
$\mathcal{M}^{r_1,r_2}$ for all $r_1,r_2 \in [1,\infty]$, provided we introduce weights on the symbols according to the following result \cite[Theorem 1.2]{fiomodulation1}.

\begin{theorem}\label{elefabio}
Consider a tame phase function $\Phi$,  a symbol $\sigma\in
M_{v_{s_1,s_2}\otimes1}^{\infty,1}(\rdd)$,
$s_1,s_2\in\R$ and $1\leq r_1,r_2\leq\infty$.
Assume one of the following conditions:

\begin{enumerate}[(i)]
\item $r_1=r_2$ and $s_1,s_2\geq0$,
\item $r_2<r_1$, $s_1>d\left(\frac{1}{r_2}-\frac{1}{r_1}\right)$
and $s_2\geq0$,
\item $r_1<r_2$, $s_1\geq0$ and
$s_2>d\left(\frac{1}{r_1}-\frac{1}{r_2}\right)$.
\end{enumerate}
Then the corresponding FIO \ $T$ extends to a bounded operator on $\mathcal{M}^{r_1,r_2}(\rd)$, and
\begin{equation*}
\|Tf\|_{\mathcal{M}^{r_1,r_2}}\lesssim\|\sigma\|_{M_{v_{s_1,s_2}\otimes1}^{\infty,1}}\|f\|_{\mathcal{M}^{r_1,r_2}}.
\end{equation*}
\end{theorem}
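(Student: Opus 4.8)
\emph{Strategy.} The plan is to reduce the a priori estimate --- proved for $f\in\cS(\rd)$ and then extended by density of $\cS(\rd)$ in $\mathcal{M}^{r_1,r_2}(\rd)$ --- to the boundedness of an integral operator on the mixed-norm Lebesgue space $L^{r_1,r_2}(\rdd)$. Fix $\psi,g\in\cS(\rd)$ with $\|\psi\|_{L^2}=1$. Applying the inversion formula \eqref{treduetre} to $f$ in \eqref{fio} and pairing $Tf$ with $M_{z_2}T_{z_1}g$ gives
\begin{equation*}
V_g(Tf)(z)=\int_{\rdd}K(z,w)\,V_\psi f(w)\,dw,\qquad K(z,w)=\langle T(M_{w_2}T_{w_1}\psi),\,M_{z_2}T_{z_1}g\rangle .
\end{equation*}
Since $\|h\|_{\mathcal{M}^{r_1,r_2}}\asymp\|V_\psi h\|_{L^{r_1,r_2}}$, it suffices to prove that the operator $h\mapsto\int_{\rdd}K(\cdot,w)h(w)\,dw$ is bounded on $L^{r_1,r_2}(\rdd)$ with norm $\lesssim\|\sigma\|_{M_{v_{s_1,s_2}\otimes1}^{\infty,1}}$.

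\emph{Kernel estimate.} The key step is a pointwise bound for $K$. Expanding $\sigma$ via \eqref{treduetre} on $\rdd$ with a window $\Psi\in\cS(\rdd)$ writes $K$ as a superposition over $(X,\Xi)\in\rdd\times\rdd$, with weight $V_\Psi\sigma(X,\Xi)$, of kernels of FIOs having the same tame phase $\Phi$ and the elementary symbols $M_\Xi T_X\Psi$. For each such elementary piece the oscillatory integral is controlled by repeated integration by parts, using $\partial^\alpha\Phi\in L^\infty$ for $|\alpha|\geq2$ together with the non-degeneracy \eqref{detcond}; this produces, for every $N$, a bound $C_N\langle w-\chi_{X,\Xi}(z)\rangle^{-2N}$, where $z\mapsto\chi_{X,\Xi}(z)$ is a bi-Lipschitz map obtained from the canonical transformation associated with $\Phi$ by a translation controlled by $(X,\Xi)$. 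Inserting the weighted bound $|V_\Psi\sigma(X,\Xi)|\leq F(\Xi)\,\langle x\rangle^{-s_1}\langle\eta\rangle^{-s_2}$, $X=(x,\eta)$, with $F\in L^1(\rdd)$ and $\|F\|_{L^1}\asymp\|\sigma\|_{M_{v_{s_1,s_2}\otimes1}^{\infty,1}}$, and integrating in $X$, one arrives at $|K(z,w)|\lesssim\int_{\rdd}F(\Xi)\,\Theta_\Xi(z,w)\,d\Xi$, where $\Theta_\Xi$ is concentrated near the graph $w=\chi_\Xi(z)$ of the translated canonical transformation and carries, transversally to that graph, polynomial decay of order $s_1$ in the spatial mismatch and $s_2$ in the frequency mismatch.

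\emph{Mixed-norm estimate and the role of $(i)$--$(iii)$.} It remains to bound $h\mapsto\int_{\rdd}\Theta_\Xi(z,w)h(w)\,dw$ on $L^{r_1,r_2}(\rdd)$ uniformly in $\Xi$, and then to integrate $F\in L^1$ in $\Xi$. When $r_1=r_2$ the mixed norm is the ordinary $L^{r_1}$-norm, which is invariant up to a constant under $\chi_\Xi$, a bi-Lipschitz map whose Jacobian is bounded above and below; since $\Theta_\Xi$ has integrable transversal profile, Schur's test applies with no decay of $\sigma$ required, so $s_1,s_2\geq0$ suffices --- this is case $(i)$. When $r_1\neq r_2$ one estimates the mixed norm by integrating first over the variable carrying the smaller exponent; because \eqref{phasegrad} is not assumed, the translated canonical transformation may shear the two groups of variables into one another by an amount that grows linearly, and this shear costs, through Young's and Minkowski's integral inequalities for mixed-norm spaces, a loss that is absorbed exactly when $\Theta_\Xi$ has extra decay transverse to the shear of order strictly larger than $|1/r_1-1/r_2|\,d$ in the relevant variable: in the spatial variable when $r_2<r_1$ (supplied by $s_1>d(1/r_2-1/r_1)$, with $s_2\geq0$ harmless --- case $(ii)$), and in the frequency variable when $r_1<r_2$ (supplied by $s_2>d(1/r_1-1/r_2)$, $s_1\geq0$ --- case $(iii)$, obtained symmetrically). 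The main obstacle is the kernel estimate: one must track precisely how the polynomial decay of $V_\Psi\sigma$ in the phase-space variable $X\in\rdd$ is converted into decay of $K$ transverse to the graph of $\chi_\Xi$, and keep the integration-by-parts bounds uniform in the regime where the spatial displacement of $\chi_\Xi$ grows with the frequency.
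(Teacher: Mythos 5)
First, note that the paper does not prove this statement: Theorem \ref{elefabio} is quoted verbatim from \cite[Theorem 1.2]{fiomodulation1}, so there is no in-paper proof to match. Your overall architecture does, however, coincide with the strategy of the cited proof (and with the machinery the present paper builds around it): reduce to an integral operator on $L^{r_1,r_2}(\rdd)$ acting on $V_\psi f$, estimate the Gabor matrix $\la T(M_{w_2}T_{w_1}\psi),M_{z_2}T_{z_1}g\ra$, and run a Schur-type test on mixed-norm spaces in which the weights compensate the shear produced by the canonical transformation. One structural difference: you derive the kernel bound by decomposing $\sigma$ into elementary pieces $M_\Xi T_X\Psi$ and integrating by parts, whereas the actual proof uses the \emph{exact} identity \eqref{symbstft}, which expresses the Gabor matrix as the STFT of $\sigma$ with respect to the $z$-dependent window $\Psi_z$ of \eqref{zpsi}; combined with Lemma \ref{wienerprop} and the characterization \eqref{p1} this gives the kernel bound with no stationary-phase analysis, and the non-degeneracy \eqref{detcond} is needed only to justify the bi-Lipschitz changes of variables in the Schur test, not for the kernel estimate.

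The genuine gap is in the step you yourself flag as ``the main obstacle'', and as stated it does not close cases (ii) and (iii). You assert that the weight $v_{s_1,s_2}\otimes 1$ converts into polynomial decay of $\Theta_\Xi$ \emph{transversally to the graph} of $\chi_\Xi$, i.e.\ in the mismatch $w-\chi_\Xi(z)$. That is the wrong location for the decay, and decay there cannot absorb the shear: the shear acts \emph{along} the graph, so a kernel that is merely rapidly/polynomially decaying in the mismatch is exactly the situation of $\sigma\equiv 1$ with phase $x\cdot\eta+|x|^2/2$, where the operator $f\mapsto e^{\pi i|x|^2}f$ fails to be bounded on $\mathcal M^{r_1,r_2}$ for $r_1\neq r_2$. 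What the identity \eqref{symbstft} actually shows is that the weight $\la x\ra^{s_1}\la\eta\ra^{s_2}$ on the \emph{first} $2d$ arguments of $V_\Psi\sigma$ becomes decay of the kernel in the \emph{absolute} variables $x'$ (output space, order $s_1$) and $\omega$ (input frequency, order $s_2$), since those are precisely the first arguments $(x',\omega)$ at which $V_{\Psi_{(x',\omega)}}\sigma$ is evaluated; the $L^1$ part in the second group of variables gives the integrable profile in the mismatch $(\omega'-\nabla_x\Phi(x',\omega),\,x-\nabla_\eta\Phi(x',\omega))$. It is the factor $\la x'\ra^{-s_1}$ that pays for the unbounded spatial dependence of $\nabla_x\Phi(x',\omega)$ when $r_2<r_1$ (case (ii)), and $\la\omega\ra^{-s_2}$ that pays for $\nabla_\eta\Phi$ when $r_1<r_2$ (case (iii)). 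Your case (i) is fine, but to repair (ii) and (iii) you must re-derive the kernel bound in the form $|K(z,w)|\lesssim\la x'\ra^{-s_1}\la\omega\ra^{-s_2}\,\tilde F(\omega'-\nabla_x\Phi(x',\omega),\,x-\nabla_\eta\Phi(x',\omega))$ with $\tilde F\in L^1(\rdd)$, and then carry out the mixed-norm Schur estimate with this absolute decay, as in \cite{fiomodulation1,fio1}.
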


The next quoted result \cite[Theorem 1.3]{fiomodulation1} shows, in particular, that a further condition on the phase function \eqref{d2fix}
gives a FIO that is continuous from $\mathcal{M}^{r_1,r_2}(\rd)$ into $\mathcal{M}^{r_2,r_1}(\rd)$.

\begin{theorem}\label{elefabioqppq}
Consider a tame phase function $\Phi$, and let $1\leq r_2 \leq r_1\leq\infty$.
Assume one of the following conditions:
\begin{enumerate}[(i)]
\item $s_1,s_2 \geq 0$, the symbol $\sigma\in
M^{\infty,1}(\rdd)$ and for some $\delta>0$,
\begin{equation}\label{d2fix}
   \left|{\rm det}\,
\left(\frac{\partial^2\Phi}{\partial
x_i\partial x_l}\Big|_{
(x,\eta)}\right)\right|\geq
\delta\quad \forall
(x,\eta)\in \R^{2d},
\end{equation}
\item the symbol
$\sigma\in M^{\infty,1}_{v_{s_1,s_2}\otimes1}(\rdd)$, with
$s_1 > d\left(\frac{1}{r_2}-\frac{1}{r_1}\right)$ and $s_2 \geq 0$.
\end{enumerate}
Then the corresponding FIO \ $T$ extends to a bounded operator
from $\mathcal{M}^{r_1,r_2}(\rd)$ into $\mathcal{M}^{r_2,r_1}(\rd)$,
and
\begin{equation}\nonumber
\|Tf\|_{\mathcal{M}^{r_2,r_1}}\lesssim\|\sigma\|_{M_{v_{s_1,s_2}\otimes1}^{\infty,1}}\|f\|_{\mathcal{M}^{r_1,r_2}}.
\end{equation}
\end{theorem}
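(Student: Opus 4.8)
The plan is to derive the boundedness of $T$ from two ingredients: the almost-diagonalization of $T$ in a Gabor frame, and a change-of-variables inequality in mixed-norm Lebesgue spaces; the extra hypothesis \eqref{d2fix} (in case (i)), resp.\ the weight on the symbol (in case (ii)), will enter only in the second ingredient. Fix $g\in\cS(\rd)$ with $\|g\|_{L^2}=1$; for $z=(x,\xi)$ write $\pi(z)=M_\xi T_x$, and similarly $\pi(w)=M_\eta T_y$ for $w=(y,\eta)$; for $1\le p,q\le\infty$ write $\|F\|_{L^p_a L^q_b}$ for $\big(\int(\int|F(a,b)|^p\,da)^{q/p}\,db\big)^{1/q}$, so that $\|h\|_{\mathcal{M}^{p,q}}=\|V_gh\|_{L^p_x L^q_\o}$. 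By density of $\cS(\rd)$ in $\mathcal{M}^{r_1,r_2}(\rd)$ it suffices to prove the estimate for $f\in\cS(\rd)$. Recall that for a tame phase and $\sigma\in M^{\infty,1}(\rdd)$ the Gabor matrix of $T$ concentrates near the graph of the canonical transformation $\chi$ generated by $\Phi$, i.e.\ $\chi(y,\eta)=(x,\xi)$ with $y=\nabla_\eta\Phi(x,\eta)$, $\xi=\nabla_x\Phi(x,\eta)$: there is $H\in L^1(\rdd)$ with $|\langle T\pi(w)g,\pi(z)g\rangle|\le H(z-\chi(w))$ (when $\sigma\in M^{\infty,1}_{v_{s_1,s_2}\otimes1}$, $H$ additionally absorbs the spatial decay prescribed by $v_{s_1,s_2}$). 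Inserting the inversion formula \eqref{treduetre} yields
\[
|V_g(Tf)(z)|\le\int_{\rdd}|V_gf(w)|\,H(z-\chi(w))\,dw .
\]
Since $\Phi$ is tame, $\chi$ is a bi-Lipschitz diffeomorphism of $\rdd$ whose Jacobian is bounded above and below; substituting $w\mapsto\chi^{-1}(w)$ and invoking Young's inequality for mixed-norm spaces ($H\in L^1$) reduces the theorem to the change-of-variables estimate $\|(V_gf)\circ\chi^{-1}\|_{L^{r_2}_x L^{r_1}_\xi}\lesssim\|V_gf\|_{L^{r_1}_y L^{r_2}_\eta}$ (the left side acquiring a spatial weight in case (ii)).

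For case (i) I would parametrize the graph of $\chi$ by $(x,\eta)$, so that the input point is $(y,\eta)=(\nabla_\eta\Phi(x,\eta),\eta)$ and the output is $(x,\xi)=(x,\nabla_x\Phi(x,\eta))$. Non-degeneracy \eqref{detcond} makes $(x,\eta)\leftrightarrow(y,\eta)$ and $(x,\eta)\leftrightarrow(x,\xi)$ bi-Lipschitz diffeomorphisms with Jacobians bounded above and below, and the extra hypothesis \eqref{d2fix} does the same for the mixed coordinates $(x,\eta)\leftrightarrow(\eta,\xi)$ — the slicewise Jacobian at fixed $\xi$ equals $\pm\det(\partial_x\partial_\eta\Phi)/\det(\partial_x\partial_x\Phi)$, bounded above and below by \eqref{detcond}, \eqref{d2fix} and tameness. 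Consequently $(x,\xi)\leftrightarrow(\eta,\xi)$ is a diffeomorphism preserving $\xi$, and $(y,\eta)\leftrightarrow(\eta,\xi)$ one preserving $\eta$, both with slicewise Jacobians bounded above and below. Then I would: (a) change variables $x\mapsto\eta$ in each fixed $\xi$-slice, which replaces $\|(V_gf)\circ\chi^{-1}\|_{L^{r_2}_x L^{r_1}_\xi}$ (up to constants) by $\|F^\flat\|_{L^{r_2}_\eta L^{r_1}_\xi}$, where $F^\flat(\eta,\xi)$ denotes $(V_gf)\circ\chi^{-1}$ read in $(\eta,\xi)$ coordinates and is readily seen to equal $V_gf(y(\eta,\xi),\eta)$; (b) since $r_2\le r_1$, apply Minkowski's integral inequality to obtain $\|F^\flat\|_{L^{r_2}_\eta L^{r_1}_\xi}\le\|F^\flat\|_{L^{r_1}_\xi L^{r_2}_\eta}$; (c) change variables $\xi\mapsto y$ in each fixed $\eta$-slice, which turns $F^\flat$ back into $V_gf(y,\eta)$ and the norm into $\|V_gf\|_{L^{r_1}_y L^{r_2}_\eta}$. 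This completes case (i).

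Case (ii) follows the same scheme, but since \eqref{d2fix} may fail the slicewise Jacobian $\det(\partial_x\partial_x\Phi)$ may vanish and the swap in steps (b)–(c) is unavailable; instead one exploits the spatial decay of the symbol. The weight $v_{s_1,s_2}\otimes1$ produces, through the weighted almost-diagonalization, a factor $\langle x\rangle^{-s_1}$ (times $\langle\eta\rangle^{-s_2}$, harmless because $s_2\ge0$) in the output spatial variable of the Gabor matrix; the condition $s_1>d(1/r_2-1/r_1)$ is exactly the one guaranteeing $\langle\cdot\rangle^{-s_1}\in L^\rho(\rd)$ with $1/\rho=1/r_2-1/r_1$, so Hölder's inequality in that variable upgrades the $L^{r_1}$-in-space control — available without \eqref{d2fix}, in the spirit of Theorems \ref{cont} and \ref{elefabio} — to the required $L^{r_2}$-in-space control, at the price of the integrable weight; carried through together with the frequency-side argument this gives the $\mathcal{M}^{r_1,r_2}\to\mathcal{M}^{r_2,r_1}$ bound. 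The step I anticipate as the main obstacle is precisely this change of variables: the canonical transformation $\chi$ is genuinely nonlinear and is not a literal interchange of the time and frequency coordinates, so one must isolate from \eqref{d2fix} (resp.\ from the symbol weight) exactly the geometric, resp.\ integrability, structure that makes Minkowski's, resp.\ Hölder's, inequality applicable under the constraint $r_2\le r_1$ — and checking that all the slicewise Jacobian bounds are uniform in the frozen variable is the technical heart of the argument.
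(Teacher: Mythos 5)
This theorem is stated in the paper only as a quotation of \cite[Theorem 1.3]{fiomodulation1} and carries no proof here, so there is no in-paper argument to compare against; measured against the cited source, your proposal follows essentially the same route (Gabor-matrix almost-diagonalization along the canonical transformation $\chi$, the change of variables enabled by \eqref{d2fix} together with Minkowski's integral inequality for $r_2\le r_1$ in case (i), and H\"older's inequality with $\la \cdot\ra^{-s_1}\in L^{\rho}$, $1/\rho=1/r_2-1/r_1$, in case (ii)), and the outline is sound. Two steps are asserted rather than proved and deserve flagging: the convolution-dominated bound $|\la T\pi(w)g,\pi(z)g\ra|\le H(z-\chi(w))$ with $H\in L^1$ (and its weighted analogue) is itself a theorem from \cite{CGFR12,fiomodulation1} --- one must untwist the mixed arguments appearing in \eqref{symbstft}, which is precisely where tameness and \eqref{detcond} are consumed; and in case (ii) your ``frequency-side argument'' should be made explicit as the embedding $\mathcal{M}^{r_1,r_2}\hookrightarrow\mathcal{M}^{r_1,r_1}$ for $r_2\le r_1$ followed by the unweighted Schur-type $L^{r_1}(\rdd)$ bound for the kernel, after which your H\"older step in the output space variable closes the argument.
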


A typical tame phase function that satisfies \eqref{d2fix}
is $\Phi(x,\eta)=x \cdot \eta + |x|^2/2$. When the symbol is $\sigma \equiv 1 \in M^{\infty,1}$,
the FIO is the pointwise multiplication operator
\begin{equation}\label{pointmult}
T f(x) = e^{i \pi |x|^2} f(x), \quad x \in \rd.
\end{equation}
By Theorem \ref{elefabioqppq} and \cite[Proposition 6.6]{fiomodulation1},
continuity from $\mathcal{M}^{r_1,r_2}(\rd)$ into $\mathcal{M}^{r_2,r_1}(\rd)$ holds for this operator
if and only if $r_2 \leq r_1$.
Continuity of the operator \eqref{pointmult} from $\mathcal{M}^{r_1,r_2}(\rd)$ into $\mathcal{M}^{t_1,t_2}(\rd)$
is equivalent to continuity of the Schr\"odinger multiplier operator
\begin{equation}\label{schrodmult}
Tf(x)=\intrd e^{2\pi i x \cdot \eta} e^{\pi i |\eta |^2} \hat{f}(\eta)
\,d\eta, \quad f\in\cS(\rd),
\end{equation}
from $\cW(\cF L^{r_1},L^{r_2})$ into $\cW(\cF L^{t_1},L^{t_2})$.

For $1\leq r_1<r_2\leq\infty$, one could conjecture
that a FIO \ $T_\sigma$ is bounded from
$\mathcal{M}^{r_1,r_2}$ to $\mathcal{M}^{r_2,r_1}$,
provided $\sigma\in M^{\infty,1}(\rdd)$, and the phase function  $\Phi$ is tame and satisfies
\begin{equation}\label{detcond01}
   \left|{\rm det}\,
\left(\frac{\partial^2\Phi}{\partial \eta_i\partial \eta_l}\Big|_{
(x,\eta)}\right)\right|\geq \delta\quad \forall (x,\eta)\in
\R^{2d},
\end{equation}
for some $\delta>0$, instead of \eqref{d2fix}.
But the conjecture is false as shown by the following result.
It treats the Schr\"odinger multiplier \eqref{schrodmult}, which is a FIO with phase
$\Phi\phas=x \cdot \eta + |\eta|^2/2$ and symbol $\sigma\equiv 1\in
M^{\infty,1}$.

\begin{proposition} The  Schr\"odinger multiplier \eqref{schrodmult} is bounded from $\mathcal{M}^{r_1,r_2}(\rd)$ to $\mathcal{M}^{t_1,t_2}(\rd)$ if and only if $r_i\leq t_i$, $i=1,2$.
\end{proposition}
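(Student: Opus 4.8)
The plan is to transfer the problem to the Fourier side, where the Schrödinger multiplier \eqref{schrodmult} becomes the pointwise multiplication operator $f \mapsto e^{\pi i |\cdot|^2} f$, and then to reduce everything to the behavior of this operator on Wiener amalgam spaces. Concretely, since $\cF(M^{p,q}) = W(\cF L^p, L^q)$, the operator \eqref{schrodmult} is bounded $\mathcal{M}^{r_1,r_2}(\rd) \to \mathcal{M}^{t_1,t_2}(\rd)$ if and only if the pointwise multiplier $M f(\eta) = e^{\pi i |\eta|^2} f(\eta)$ is bounded $\cW(\cF L^{r_1},L^{r_2})(\rd) \to \cW(\cF L^{t_1},L^{t_2})(\rd)$. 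This is exactly the equivalence noted after \eqref{schrodmult}. Since the global component (the $L^q$-part) of a Wiener amalgam space is insensitive to a unimodular pointwise factor, and the local component is $\cF L^p$, the relevant estimate is, informally, that $g \mapsto e^{\pi i |\cdot|^2} g$ maps $\cF L^{r_1}_{\mathrm{loc}} \to \cF L^{t_1}_{\mathrm{loc}}$ and preserves $L^{r_2} \to L^{t_2}$ amalgamation; the latter forces $r_2 \le t_2$, and the former will force $r_1 \le t_1$.

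First I would establish \emph{sufficiency}. If $r_i \le t_i$ for $i=1,2$, then by the inclusion \eqref{modspaceincl1} we have $\mathcal{M}^{r_1,r_2}(\rd) \hookrightarrow \mathcal{M}^{t_1,t_2}(\rd)$ continuously, so it suffices to show that the Schrödinger multiplier is bounded on $\mathcal{M}^{t_1,t_2}(\rd)$ for all $t_1,t_2 \in [1,\infty]$. But \eqref{schrodmult} is a FIO with tame phase $\Phi\phas = x \cdot \eta + |\eta|^2/2$ and symbol $\sigma \equiv 1 \in M^{\infty,1}(\rdd)$; its phase satisfies \eqref{phasegrad} since $\nabla_x \Phi(x,\eta) = \eta$ is independent of $x$. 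Hence Theorem \ref{cont} applies and gives boundedness on every $\mathcal{M}^{t_1,t_2}(\rd)$. (Alternatively, one can invoke Theorem \ref{Charpseudo} with $p = \infty$, $q = 1$ after noting that \eqref{schrodmult} is unitarily equivalent, via $\cF$, to a pseudodifferential operator, but the Theorem \ref{cont} route is cleanest.) Composing the inclusion with this boundedness yields \eqref{stimaA}-type continuity $\mathcal{M}^{r_1,r_2} \to \mathcal{M}^{t_1,t_2}$.

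The substantive part is \emph{necessity}: if the Schrödinger multiplier is bounded $\mathcal{M}^{r_1,r_2} \to \mathcal{M}^{t_1,t_2}$, then $r_1 \le t_1$ and $r_2 \le t_2$. For this I would test the operator on a carefully chosen family of functions, namely dilated and modulated complex Gaussians, and extract the two necessary conditions by letting a dilation parameter tend to $0$ or $\infty$. The natural test functions are $f_\lambda(\eta) = e^{-\pi \lambda |\eta|^2}$ (or, on the original side, $h_{a+ib}$-type Gaussians), for which Lemma \ref{lemm2} gives the exact asymptotics of $\|f_\lambda\|_{M^{r_1,r_2}}$, while $\cF(e^{\pi i |\cdot|^2} f_\lambda)$ is again an explicit complex Gaussian $G_{a+ib}$ whose $M^{t_1,t_2}$-norm is computed from the same lemma. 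Writing out the ratio of the two Gaussian norms as a function of $\lambda$ and comparing the exponents of $\lambda$ (and of $1+\lambda$, via \eqref{gaussdilmod}) as $\lambda \to 0$ and $\lambda \to \infty$, one reads off precisely the constraints $1/t_1 \le 1/r_1$ and $1/t_2 \le 1/r_2$; a possible spurious third condition mixing all four indices should cancel because the symbol is trivial. To separate the two indices one should also insert a modulation $M_\o$ or translation $T_x$ and use the covariance of the multiplier under phase-space shifts (the quadratic phase intertwines translations and modulations), which is the standard device for decoupling the local and global components of a modulation space; I would also use the change-of-window Lemma \ref{changewind} freely. \textbf{The main obstacle} I anticipate is bookkeeping the two-parameter Gaussian asymptotics so that both necessary conditions emerge cleanly and no extraneous condition survives — in particular, making sure the $r_1$-versus-$t_1$ inequality is genuinely forced (and not merely the weaker combined inequality \eqref{indicitutti} with $p=\infty,q=1$, which is automatically satisfied here). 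This is where the explicit constants from Lemma \ref{lemm2}, rather than crude inclusions, are essential.
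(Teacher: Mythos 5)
Your proposal is correct and follows essentially the paper's own argument: sufficiency via the embeddings \eqref{modspaceincl1} combined with boundedness of the multiplier on a single modulation space (the paper cites \cite{benyi} where you invoke Theorem \ref{cont}; both work), and necessity by testing on dilated Gaussians and comparing the asymptotics from Lemma \ref{lemm2} and \eqref{gaussdilmod} as $\lambda\to 0$ and $\lambda\to+\infty$. The extra device you anticipate needing (modulations or translations to decouple the two indices) is unnecessary: the two limits $\lambda\to 0$ and $\lambda\to+\infty$ already yield $r_1\le t_1$ and $r_2\le t_2$ separately, exactly as in the paper.
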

\begin{proof}
The sufficiency of the condition follows immediately by combining the boundedness result on $\mathcal{M}^{r_1,r_2}(\rd)$ provided by \cite[Theorem 1]{benyi}, and the inclusion relations for modulation spaces \eqref{modspaceincl1}.
Vice versa, assume that
$$\|Tf\|_{\mathcal{M}^{t_1,t_2}(\rd)} \lesssim \|f\|_{\mathcal{M}^{r_1,r_2}(\rd)}, \quad f\in\cS(\rd).
$$
Taking $f=\f_\lambda(t)=e^{-\pi \lambda^2 |t|^2}$, \eqref{gaussdilmod} gives
\begin{equation}\label{dilgauss}
\|\f_\lambda\|_{\mathcal{M}^{r_1,r_2}(\rd)}\asymp\left\{\begin{array}{ll}
\lambda^{-\frac{d}{r_1}}, & \lambda\to 0\\
\lambda^{-\frac{d}{r_2'}}, & \lambda\to +\infty.
\end{array}
\right.
\end{equation}
Straightforward computations give
$$
T\f_\lambda = (1-i \lambda^2)^{-d/2} e^{-\pi \frac{\lambda^2}{1-i \lambda^2} |\cdot|^2},
$$
and an application of Lemma \ref{lemm2} yields
\begin{equation}\nonumber
\|T\f_\lambda\|_{\cM^{t_1,t_2}}
\asymp
\left\{\begin{array}{ll}
\lambda^{-\frac{d}{t_1}}, & \lambda\to 0\\
\lambda^{-\frac{d}{t_2'}}, & \lambda\to +\infty.
\end{array}
\right.
\end{equation}
Combining with \eqref{dilgauss} we obtain $r_1\leq t_1$ for $\lambda \to 0$,
 and $r_2\leq t_2$ for $\lambda\to+\infty$.
\end{proof}

\subsection{Fourier integral operators and Wiener amalgam spaces}

From Theorems \ref{cont}, \ref{elefabio} and \ref{elefabioqppq}
we may infer results on continuity of FIOs acting on Wiener amalgam spaces.
Indeed, since $\cF \mathcal{M}^{r_1,r_2}=\mathcal{W}(\cF L^{r_1},
L^{r_2})$, it follows  that
\begin{equation}\label{duality1}
\| T f \|_{\mathcal{W}(\cF L^{t_1},
L^{t_2})}
\lesssim
\|\sigma\|_{M_{v_{s_1,s_2} \otimes 1}^{p,q}}\|f\|_{\mathcal{W}(\cF L^{r_1},
L^{r_2})}
\end{equation}
if and only if
\begin{equation*}
\| \tilde T f \|_{\mathcal{M}^{t_1,t_2}}
\lesssim
\|\sigma\|_{M_{v_{s_1,s_2} \otimes 1}^{p,q}}\|f\|_{\mathcal{M}^{r_1,r_2}},
\end{equation*}
where $\tilde{T} = \Fur \circ T \circ \Fur^{-1}$. By duality and an explicit computation this is equivalent to veryfing that the adjoint  operator
\begin{equation}\label{Ttildeexpr}
\tilde{T}^\ast f(x)=\int e^{-2\pi i\Phi(-\eta,x)}\overline{\sigma(-\eta,x)}\hat{f}(\eta)\, d\eta
\end{equation}
extends to a bounded operator  from $\mathcal{M}^{t'_1,t'_2}(\rd)$ to $\mathcal{M}^{r'_1,r'_2}(\rd)$.
Since  $\sigma \in M_{v_{s_1,s_2} \otimes 1}^{p,q}(\rdd)$ if and only if  $\tilde \sigma(x,\eta)=\overline{\sigma(-\eta,x)}\in M_{v_{s_2,s_1} \otimes 1}^{p,q}(\rdd) $
(cf. \cite[Lemma 2.10]{fiomodulation1}),
the continuity statement for Wiener amalgam spaces in \eqref{duality1}  follows from  the  continuity estimate
\begin{equation}
\| \tilde T^\ast_{\tilde\sigma} f \|_{\mathcal{M}^{r'_1,r'_2}}
\lesssim
\|\tilde \sigma\|_{M_{v_{s_2,s_1} \otimes 1}^{p,q}}\|f\|_{\mathcal{M}^{t'_1,t'_2}}.
\end{equation}

These considerations immediately transfer continuity results for FIOs acting on modulation spaces
to FIOs acting on Wiener amalgam spaces, possibly with modified assumptions on the phase function, cf.
\cite[Corollary 3.9 and Corollary 5.2]{fiomodulation1}.
We shall state continuity results for FIOs acting on Wiener amalgam spaces as corollaries
of the corresponding continuity results for modulation spaces.

\subsection{A characterization of modulation spaces}

We  recall the  formula, obtained in \cite[Section
6]{fio1} (see also \cite[Proposition 3.2]{fiomodulation1} and \cite[Section 4]{CGFR12}), which
expresses the Gabor matrix of the FIO $\ T$ in terms of the STFT of
its symbol $\sigma$.
Suppose the phase function $\Phi$ satisfy {\it (i)} and {\it (ii)} of Definition \ref{deffase}.
Choose a non-zero window function
$g\in \cS(\R^d)$, and define for $z,\zeta\in\rdd$
\begin{equation}\label{zpsi}
\Psi_{z}(\zeta):=e^{2\pi
i\Phi_{2,z}(\zeta)}(\bar{g}\otimes\hat{g})(\zeta),
\end{equation}
where
\begin{equation}\label{taylorrest}
\Phi_{2,z}(\zeta)=2\sum_{|\a|=2}\int_0^1(1-t)\partial^\a
\Phi(z+t\zeta)\,dt\frac{\zeta^\a}{\a!}, \quad z,\zeta\in\rdd.
\end{equation}
Let $\sigma \in \cS'(\rdd)$ and denote $g_{x,\o} = M_\o T_x g$ for $x,\o\in\R^d$. Then the so called Gabor matrix of $T$, given by $\la T g_{x,\o}, g_{x',\o'}\ra$, can be expressed via the STFT as
\begin{equation}\label{symbstft}
|\la T g_{x,\o}, g_{x',\o'}\ra|
=   \ |V_{\Psi_{(x',\o)}} \sigma(x',\o,\o'-\nabla_x\Phi(x',\o),x-\nabla_\eta \Phi(x',\o))|,
\end{equation}
for $x, \o, x', \o' \in \rd.$

We present a characterization of the spaces $M^{p,q}_{1\otimes m}(\R^{2d})$ when $m\in \mathcal{M}_{v_{s}}(\rdd)$ for $s \geq 0$.
This is a generalization of \cite[Proposition 3.10]{CGN}, that treats the cases $(p,q)=(\infty,1)$ and $p=q=\infty$,
to general $p,q \in [1,\infty]$. This characterization shows that the the time-frequency concentration of the symbol $\sigma$ does not depend on the parameter $z\in\rdd$ of the window $\Psi _z$, defined in \eqref{zpsi}.

First, we need the following simplified version of \cite[Lemma 3.9]{CGN}.

\begin{lemma}\label{wienerprop}
If $s \geq 0$, $g\in\cS(\rd)$ and $\Psi\in\cS(\rdd)$ then
\begin{equation}\label{l1W}
\sup_{z\in\rdd}|V_{\Psi_{z}}\Psi| \in L^1_{1\otimes
v_{s}}(\R^{4d}).
\end{equation}
\end{lemma}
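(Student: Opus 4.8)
The plan is to estimate $V_{\Psi_z}\Psi$ pointwise, uniformly in $z$, by a function that is rapidly decreasing (in fact Schwartz-type decay) on $\rdd\times\rdd$, and then to note that any such bound lies in $L^1_{1\otimes v_s}(\R^{4d})$ because $v_s$ is polynomially bounded. The key observation is that $\Psi_z(\zeta) = e^{2\pi i \Phi_{2,z}(\zeta)}(\bar g\otimes\hat g)(\zeta)$ has the fixed Schwartz amplitude $\bar g\otimes\hat g$ independent of $z$, while the $z$-dependence sits only in the unimodular factor $e^{2\pi i\Phi_{2,z}(\zeta)}$. Since $\Phi$ is tame, condition \eqref{phasedecay} gives $|\partial^\a\Phi|\le C_\a$ for $|\a|\ge 2$, and differentiating \eqref{taylorrest} under the integral sign shows that all derivatives $\partial_\zeta^\beta \Phi_{2,z}(\zeta)$ are bounded, \emph{uniformly in} $z\in\rdd$, with at most quadratic growth in $\zeta$ coming from the $\beta=0$ term but bounded growth for $|\beta|\ge 1$; more precisely $|\partial_\zeta^\beta \Phi_{2,z}(\zeta)|\lesssim \langle\zeta\rangle^{\max(2-|\beta|,0)}$ uniformly in $z$. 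Hence $\partial_\zeta^\beta \Psi_z$ is, for each multi-index $\beta$, bounded by a fixed Schwartz function of $\zeta$ times a polynomial, the constants being uniform in $z$; equivalently $\{\Psi_z\}_{z\in\rdd}$ is a bounded subset of $\cS(\rdd)$ after absorbing the polynomial factors into the Schwartz decay of $\bar g\otimes\hat g$.

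First I would write out $V_{\Psi_z}\Psi(w,\xi) = \int_{\rdd} \Psi(\zeta)\,\overline{\Psi_z(\zeta-w)}\, e^{-2\pi i\zeta\cdot\xi}\, d\zeta$ for $w,\xi\in\rdd$, and use the standard device for proving STFT decay: integrate by parts in $\zeta$ against $e^{-2\pi i\zeta\cdot\xi}$ to gain powers of $\langle\xi\rangle^{-1}$, and use the rapid decay of both $\Psi$ and (the fixed Schwartz majorant of) $\Psi_z$ in the variable $\zeta$ together with the fact that $\Psi_z$ is centered at $w$ to gain powers of $\langle w\rangle^{-1}$ via the elementary inequality $\langle\zeta\rangle^{-M}\langle\zeta-w\rangle^{-M}\lesssim \langle w\rangle^{-M}\langle\zeta\rangle^{-M/2}\langle\zeta-w\rangle^{-M/2}$ (Peetre). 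The integration by parts is where the uniform-in-$z$ bounds on $\partial^\beta\Phi_{2,z}$ enter: each derivative hitting $e^{2\pi i\Phi_{2,z}(\zeta-w)}$ produces a factor $\partial\Phi_{2,z}(\zeta-w)$ of at most linear growth, which is harmless against the Schwartz decay, and crucially the constants produced are independent of $z$. This yields, for every $N\in\N$, an estimate
\begin{equation}\nonumber
\sup_{z\in\rdd}|V_{\Psi_z}\Psi(w,\xi)| \lesssim_N \langle w\rangle^{-N}\langle\xi\rangle^{-N}, \quad w,\xi\in\rdd.
\end{equation}

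Finally, choosing $N > 4d + s$, the right-hand side is integrable against the weight $1\otimes v_s$ on $\R^{4d} = \rdd_w\times\rdd_\xi$, since $v_s(w,\xi) = \langle(w,\xi)\rangle^s \lesssim \langle w\rangle^s\langle\xi\rangle^s$ and $\int_{\R^{4d}}\langle w\rangle^{-N+s}\langle\xi\rangle^{-N+s}\,dw\,d\xi<\infty$; this gives \eqref{l1W}. The main obstacle, and the only nonroutine point, is verifying the uniformity in $z$ of the derivative bounds on $\Phi_{2,z}$ from \eqref{taylorrest} — once that is in hand (it is a direct consequence of differentiating under the integral and using \eqref{phasedecay}), the rest is the standard Schwartz-seminorm estimate for the STFT of Schwartz functions, carried through with constants uniform over the bounded family $\{\Psi_z\}\subset\cS(\rdd)$. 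This is precisely the simplification over \cite[Lemma 3.9]{CGN}, where a weight on the symbol side forces one to track an extra moderate weight; here $\Psi\in\cS(\rdd)$ so all weights are subsumed by Schwartz decay.
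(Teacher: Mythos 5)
Your argument is correct, and it is essentially the standard proof: the paper itself gives no proof of this lemma, deferring to \cite[Lemma 3.9]{CGN}, and the argument there rests on exactly the two ingredients you identify, namely that $\{\Psi_z\}_{z\in\rdd}$ is a bounded family in $\cS(\rdd)$ (uniform Schwartz seminorms thanks to \eqref{phasedecay}) and the standard integration-by-parts/Peetre estimate giving $\sup_z|V_{\Psi_z}\Psi(w,\xi)|\lesssim_N\langle w\rangle^{-N}\langle\xi\rangle^{-N}$. One small remark: your bound $|\partial_\zeta^\beta\Phi_{2,z}(\zeta)|\lesssim\langle\zeta\rangle^{\max(2-|\beta|,0)}$ is true, but naive term-by-term differentiation of \eqref{taylorrest} only yields $\langle\zeta\rangle^2$ for $|\beta|=1$ (a derivative can land on $\partial^\a\Phi(z+t\zeta)$ without lowering the degree of $\zeta^\a$); the clean way to get the sharper bound is the Taylor identity $\Phi_{2,z}(\zeta)=\Phi(z+\zeta)-\Phi(z)-\nabla\Phi(z)\cdot\zeta$, whence $\nabla_\zeta\Phi_{2,z}(\zeta)=\nabla\Phi(z+\zeta)-\nabla\Phi(z)$ and $\partial^\beta_\zeta\Phi_{2,z}(\zeta)=\partial^\beta\Phi(z+\zeta)$ for $|\beta|\ge2$. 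This is immaterial anyway, since all your argument needs is polynomial bounds uniform in $z$, which hold either way; likewise your use of $\langle(w,\xi)\rangle^s$ only overestimates the actual weight $1\otimes v_s$, so the conclusion stands.
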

The characterization for modulation spaces in terms of $\Psi _z$ is as follows.
\begin{proposition}\label{proposition4}
Let $s\geq0$, $m\in \mathcal{M}_{v_{s}}(\rdd)$, $p,q \in [1,\infty]$, and
$\sigma\in\cS'(\rdd)$.  Then
$$\sigma \in M^{p,q}_{1\otimes
 m}(\R^{2d}) \quad \Longleftrightarrow \quad \sup_{z\in\rdd}|V_{\Psi_z}
 \sigma|\in L^{p,q}_{1\otimes m}(\R^{4d}),$$
and
\begin{align}\label{p1}
\|\sigma\|_{M^{p,q}_{1\otimes
 m}(\R^{2d})}&\asymp \|\sup_{z\in\rdd}|V_{\Psi_z}
\sigma|\,\|_{L^{p,q}_{1\otimes
    m}(\R^{4d})} \\
    &= \left(\int_{\rdd} \left(\int_{\rdd}\sup_{z\in\rdd}|V_{\Psi_z}
\sigma(u_1,u_2)|^p m(u_2)^p d u_1\right)^{\frac q
p}\,du_2\right)^{\frac1q}\nonumber
\end{align}
(with obvious modifications when $p=\infty$ or $q=\infty$).
\end{proposition}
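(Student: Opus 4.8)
The plan is to reduce the characterization of $M^{p,q}_{1\otimes m}(\rdd)$ to the special case $z=0$ (or any fixed $z_0$), where $\Psi_{z_0}$ is a fixed Schwartz window, and then control the $z$-dependence uniformly. Recall that for a fixed nonzero window $\gamma \in \cS(\rdd)$ we have, by the very definition of the modulation space norm, $\|\sigma\|_{M^{p,q}_{1\otimes m}(\rdd)} \asymp \|V_\gamma \sigma\|_{L^{p,q}_{1\otimes m}(\R^{4d})}$, with the mixed norm taken in the order specified in \eqref{p1}. First I would fix $z_0\in\rdd$ and note that $\Psi_{z_0}\in\cS(\rdd)\setminus\{0\}$ (the factor $\bar g\otimes\hat g$ is Schwartz and nonzero, and the exponential has modulus one), so $\|\sigma\|_{M^{p,q}_{1\otimes m}}\asymp\|V_{\Psi_{z_0}}\sigma\|_{L^{p,q}_{1\otimes m}}$. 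Since trivially $|V_{\Psi_{z_0}}\sigma|\le\sup_{z\in\rdd}|V_{\Psi_z}\sigma|$ pointwise, the lower bound
\[
\|\sigma\|_{M^{p,q}_{1\otimes m}}\lesssim\bignm{\sup_{z\in\rdd}|V_{\Psi_z}\sigma|}_{L^{p,q}_{1\otimes m}}
\]
is immediate, and in particular finiteness of the right-hand side forces $\sigma\in M^{p,q}_{1\otimes m}$.

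For the reverse inequality I would use the change-of-window estimate, Lemma \ref{changewind}. Applying it with $g_0=\Psi_z$, $g_1=\Psi_{z_0}$, and $\gamma=\Psi_{z_0}$ gives, uniformly in $z$,
\[
|V_{\Psi_z}\sigma(w)|\le\frac{1}{\|\Psi_{z_0}\|_{L^2}^2}\,\bigparen{|V_{\Psi_{z_0}}\sigma|\ast|V_{\Psi_{z_0}}\Psi_z|}(w),\qquad w\in\R^{4d}.
\]
Taking the supremum over $z$ and using that the convolution is monotone, we get
\[
\sup_{z\in\rdd}|V_{\Psi_z}\sigma|\le C\,\Bigparen{|V_{\Psi_{z_0}}\sigma|\ast\sup_{z\in\rdd}|V_{\Psi_{z_0}}\Psi_z|}.
\]
Now the key input is Lemma \ref{wienerprop}, which — applied with the pair of Schwartz functions $g$ and $\Psi_{z_0}$, or rather in the symmetric form needed here — tells us that $\sup_{z\in\rdd}|V_{\Psi_{z_0}}\Psi_z|$, equivalently $\sup_z|V_{\Psi_z}\Psi_{z_0}|$ after the reflection identity for the STFT, lies in $L^1_{1\otimes v_s}(\R^{4d})$. (One should check that Lemma \ref{wienerprop} as stated, with its window $\Psi_z$ built from $z$, furnishes exactly $\sup_z|V_{\Psi_{z_0}}\Psi_z|\in L^1_{1\otimes v_s}$; this follows from the symmetry $|V_{\phi}\psi(u_1,u_2)|=|V_\psi\phi(-u_1,-u_2)|$ together with $v_s$ being even, or directly by noting that the roles of the two arguments in \eqref{l1W} can be swapped at the cost of a reflection.) Call this $L^1_{1\otimes v_s}$ function $H$.

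Then I would invoke the standard convolution inequality for weighted mixed-norm Lebesgue spaces: since $m$ is $v_s$-moderate, i.e. $m(u_2)\lesssim v_s(u_2-w_2)m(w_2)$, Young-type estimates give
\[
\bignm{F\ast H}_{L^{p,q}_{1\otimes m}(\R^{4d})}\lesssim\|F\|_{L^{p,q}_{1\otimes m}(\R^{4d})}\,\|H\|_{L^1_{1\otimes v_s}(\R^{4d})}
\]
for all $p,q\in[1,\infty]$. Combining the three displays with $F=|V_{\Psi_{z_0}}\sigma|$ yields
\[
\bignm{\sup_{z\in\rdd}|V_{\Psi_z}\sigma|}_{L^{p,q}_{1\otimes m}}\lesssim\|V_{\Psi_{z_0}}\sigma\|_{L^{p,q}_{1\otimes m}}\asymp\|\sigma\|_{M^{p,q}_{1\otimes m}},
\]
which is the remaining inequality, and it also shows the right-hand side is finite whenever $\sigma\in M^{p,q}_{1\otimes m}$. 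Together with the reverse bound this proves the equivalence of norms \eqref{p1} and the stated characterization. The main obstacle is the bookkeeping in the second paragraph: one must be careful that $\Psi_{z_0}$ (not merely $g$) is the window for which Lemma \ref{wienerprop} is applied, that the supremum over $z$ passes through the convolution correctly, and that the weight $v_s$ appearing there is precisely the one controlling the $v_s$-moderateness of $m$ — so that the weighted Young inequality is applicable with the same $s$. The rest is routine, and crucially every step holds for the full range $p,q\in[1,\infty]$ because the weighted convolution inequality for $L^{p,q}$ against $L^1$ does.
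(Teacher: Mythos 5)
Your proof is correct, and the substantive direction --- bounding $\|\sup_{z}|V_{\Psi_z}\sigma|\,\|_{L^{p,q}_{1\otimes m}}$ by $\|\sigma\|_{M^{p,q}_{1\otimes m}}$ via Lemma \ref{changewind}, Lemma \ref{wienerprop} and the weighted Young inequality --- is exactly the paper's argument. For the reverse inequality the paper runs the same convolution machinery a second time, estimating $|V_{\Psi}\sigma|\lesssim (\sup_z|V_{\Psi_z}\sigma|)\ast(\sup_z|\widetilde{V_{\Psi_z}\Psi}|)$ and applying Young again, whereas you shortcut it by fixing $z_0$, observing that $\Psi_{z_0}\in\cS(\rdd)\setminus\{0\}$, and invoking the window-independence of the modulation space norm together with the trivial pointwise bound $|V_{\Psi_{z_0}}\sigma|\le\sup_z|V_{\Psi_z}\sigma|$; this is legitimate and slightly more economical, at the cost of quoting the window-change equivalence of norms as a black box rather than re-deriving it. One bookkeeping remark: Lemma \ref{changewind} with $g_0=\Psi_z$ and $g_1=\gamma=\Psi_{z_0}$ produces the convolution factor $|V_{\Psi_z}\Psi_{z_0}|$ (i.e.\ $|V_{g_0}\gamma|$), not $|V_{\Psi_{z_0}}\Psi_z|$, so Lemma \ref{wienerprop} applied with $\Psi=\Psi_{z_0}$ furnishes exactly the required $L^1_{1\otimes v_s}$ majorant with no reflection needed; your detour through the reflection identity is harmless (since $v_s$ is even) but unnecessary.
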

\begin{proof}  We first prove
$\|\sup_{z\in\rdd}|V_{\Psi_z} \sigma|\,\|_{ L^{p,q}_{1\otimes m}}\lesssim \|\sigma\|_{M^{p,q}_{1\otimes m}}$.
Taking $\Psi\in\cS(\rdd)$ such that $\| \Psi \|_{L^2} = 1$ and using Lemma \ref{changewind}, we have
$$
|V_{\Psi_z}\sigma|(u_1,u_2)|\leq |V_{\Psi}\sigma| \ast|V_{\Psi_z} \Psi|(u_1,u_2)\leq |V_{\Psi}\sigma| \ast\sup_{z\in\rdd} |V_{\Psi_z} \Psi|(u_1,u_2).
$$
Young's inequality and the assumption $m\in\mathcal{M}_{v_{s}}(\rdd)$ yield
$$
\|\sup_{z\in\rdd} |V_{\Psi_z}
\sigma|\,\|_{L^{p,q}_{1\otimes m}}\leq \|V_{\Psi}\sigma\|_{L^{p,q}_{1\otimes m}}\|\sup_{z\in\rdd} |V_{\Psi_z}
\Psi|\,\|_{L^1_{1\otimes v_s}}
\lesssim \|\sigma\|_{M^{p,q}_{1\otimes m}},
$$
thanks to Lemma \ref{wienerprop}.

On the other hand, assume
$\|\sup_{z\in\rdd}|V_{\Psi_z} \sigma|\,\|_{L^{p,q}_{1\otimes m}(\R^{4d})}<\infty$.
Since $\| \Psi_z \|_{L^2} = \|g\|_{L^2}^4$ and $|V_{\Psi}\Psi_z(u)| = |V_{\Psi_z}\Psi (-u)|$,  denoting $\widetilde f(x)=f(-x)$, Lemma \ref{changewind} gives
\begin{align*}
|V_{\Psi}\sigma (u_1,u_2)|
& \lesssim |V_{\Psi_z}\sigma|\ast| V_{\Psi}\Psi_z |(u_1,u_2) \\
& = |V_{\Psi_z}\sigma|\ast| \widetilde{V_{\Psi_z}\Psi} |(u_1,u_2) \\
& \leq \left(\sup_{z\in\rdd} |V_{\Psi_z}\sigma |\right)\ast \left( \sup_{z\in\rdd} |\widetilde{V_{\Psi_z}\Psi}| \right)(u_1,u_2).
\end{align*}
Applying Young's inequality and Lemma \ref{wienerprop}, we finally obtain
\begin{align*}
\|\sigma\|_{M^{p,q}_{1\otimes m}}
& = \|V_{\Psi}\sigma\|_{L^{p,q}_{1\otimes m}}
\lesssim \|\sup_{z\in\rdd}|V_{\Psi_z}\sigma|\|_{L^{p,q}_{1\otimes m}}\|
\sup_{z\in\rdd} | \widetilde{V_{\Psi_z}\Psi} |\|_{L^{1}_{1\otimes v_{s}}} \\
& \lesssim \|\sup_{z\in\rdd}| V_{\Psi_z}\sigma |\|_{L^{p,q}_{1\otimes m}}.
\end{align*}
\end{proof}

%%%%%%%%%%%%%%%%%%%%%%%%%%%%%%%%%%%%%%%%%%%%%%%%%%%%
\section{Continuity results for FIOs}\label{FIOcont}
%%%%%%%%%%%%%%%%%%%%%%%%%%%%%%%%%%%%%%%%%%%%%%%%%%%%

First we will prove two results concerning continuity of FIOs with symbols in $M^\infty(\R^{2d})$ and $M^1(\R^{2d})$, respectively.
Then  we will make complex interpolation between them and the results in Section \ref{preliminaries}.

We need the following Schur-type test, whose proof is obvious.

\begin{lemma}\label{proschur}
Consider an integral operator $A$ on $\R^{2d}$, given by
\[
(A f)(x',\o')=\iint_{\rdd} K(x',\o';x,\o)f(x,\o)\,dx\,d\o.
\]
(i) If $K\in L^\infty(\R^{4d})$ then $A$ is continuous
from $L^1(\rdd)$ into $L^{\infty}(\rdd)$.\\
(ii) If $K\in L^1(\R^{4d})$ then $A$ is continuous
from $L^\infty(\rdd)$ into $L^{1}(\rdd)$.
\end{lemma}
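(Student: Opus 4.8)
The plan is to verify both parts directly from the triangle inequality for integrals together with Tonelli's theorem; this is exactly the kind of Schur-type estimate where continuity is read off from a single bound on the kernel, so I expect no substantive difficulty.

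For part (i), I would fix $(x',\o')\in\R^{2d}$ and bound the integral pointwise:
\[
|(Af)(x',\o')|\leq \iint_{\rdd}|K(x',\o';x,\o)|\,|f(x,\o)|\,dx\,d\o \leq \|K\|_{L^\infty(\R^{4d})}\,\|f\|_{L^1(\rdd)}.
\]
Taking the supremum over $(x',\o')\in\R^{2d}$ then gives $\|Af\|_{L^\infty(\rdd)}\leq \|K\|_{L^\infty(\R^{4d})}\|f\|_{L^1(\rdd)}$, which is the asserted continuity $L^1(\rdd)\to L^\infty(\rdd)$.

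For part (ii), I would start from the same pointwise inequality, integrate it in $(x',\o')$ over $\R^{2d}$, and interchange the order of integration by Tonelli's theorem (the integrand being nonnegative and measurable on $\R^{4d}$):
\[
\|Af\|_{L^1(\rdd)}\leq \iint_{\rdd}\iint_{\rdd}|K(x',\o';x,\o)|\,|f(x,\o)|\,dx\,d\o\,dx'\,d\o' \leq \|f\|_{L^\infty(\rdd)}\,\|K\|_{L^1(\R^{4d})},
\]
i.e.\ continuity $L^\infty(\rdd)\to L^1(\rdd)$. The only points deserving (trivial) attention are the measurability of $K$ on $\R^{4d}$ and the legitimacy of swapping integrals in part (ii), both of which are immediate; hence I anticipate no real obstacle, in line with the remark that the proof is obvious.
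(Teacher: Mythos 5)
Your proof is correct and is exactly the elementary argument the paper has in mind: the paper states the lemma "whose proof is obvious" and omits any details, and your pointwise bound for (i) and Tonelli interchange for (ii) supply precisely those details.
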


\begin{proposition}\label{Minfty}
Consider a tame phase function $\Phi$  and suppose $\sigma \in M^\infty(\R^{2d})$. Then
$T_\sigma$ extends to a bounded operator from $\mathcal{M}^1(\R^d)$ to
$\mathcal{M}^\infty(\R^d)$, and
\begin{equation}\label{caseMinftyinfty}
\| T_\sigma f \|_{\mathcal{M}^\infty(\R^d)} \lesssim \| \sigma
\|_{M^{\infty}(\R^{2d})} \| f \|_{\mathcal{M}^1(\R^d)}.
\end{equation}
\end{proposition}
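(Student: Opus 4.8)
The natural strategy is to reduce the claimed boundedness $\mathcal{M}^1 \to \mathcal{M}^\infty$ to a Schur-type estimate on the Gabor matrix of $T_\sigma$, using the characterization \eqref{symbstft} of the Gabor matrix in terms of the STFT of the symbol, together with Proposition \ref{proposition4} to control that STFT uniformly in the window parameter $z$. Concretely, fix a non-zero window $g \in \cS(\rd)$ with $\|g\|_{L^2}=1$; then by the inversion formula \eqref{treduetre} one has
\begin{equation*}
\langle T_\sigma f, g_{x',\o'}\rangle = \int_{\rdd} V_g f(x,\o)\, \langle T_\sigma g_{x,\o}, g_{x',\o'}\rangle\, dx\, d\o,
\end{equation*}
so that $V_g(T_\sigma f)(x',\o')$ is obtained from $V_g f$ by an integral operator on $\rdd$ whose kernel is exactly the Gabor matrix $\langle T_\sigma g_{x,\o}, g_{x',\o'}\rangle$. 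By definition of the norms, $\|f\|_{\mathcal{M}^1} \asymp \|V_g f\|_{L^1(\rdd)}$ and $\|T_\sigma f\|_{\mathcal{M}^\infty} \asymp \|V_g(T_\sigma f)\|_{L^\infty(\rdd)}$, so by Lemma \ref{proschur}(i) it suffices to show that the Gabor matrix is in $L^\infty(\R^{4d})$ with norm controlled by $\|\sigma\|_{M^\infty(\rdd)}$.

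**Carrying out the kernel estimate.** This last point is where \eqref{symbstft} enters: it gives
\begin{equation*}
|\langle T_\sigma g_{x,\o}, g_{x',\o'}\rangle| = |V_{\Psi_{(x',\o)}}\sigma(x',\o,\o'-\nabla_x\Phi(x',\o),x-\nabla_\eta\Phi(x',\o))|,
\end{equation*}
with $\Psi_z$ as in \eqref{zpsi}; since the phase function is tame, conditions (i)--(ii) of Definition \ref{deffase} hold, so this representation is available. Bounding the right-hand side by $\sup_{z\in\rdd}|V_{\Psi_z}\sigma|$ evaluated at the corresponding point of $\R^{4d}$, we get
\begin{equation*}
\sup_{x,\o,x',\o'}|\langle T_\sigma g_{x,\o}, g_{x',\o'}\rangle| \leq \|\sup_{z\in\rdd}|V_{\Psi_z}\sigma|\,\|_{L^\infty(\R^{4d})} \asymp \|\sigma\|_{M^\infty(\rdd)},
\end{equation*}
where the last equivalence is Proposition \ref{proposition4} with $p=q=\infty$ and $m\equiv1$ (so $s=0$). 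Combining, $\|V_g(T_\sigma f)\|_{L^\infty} \lesssim \|\sigma\|_{M^\infty}\|V_g f\|_{L^1}$, which is \eqref{caseMinftyinfty}; the extension from $\cS(\rd)$ to all of $\mathcal{M}^1(\rd)$ follows by density, since $\mathcal{M}^1$ is by definition the $M^1$-closure of $\cS(\rd)$ and the estimate is continuous in the $M^1$-norm.

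**Main obstacle.** The conceptual content is entirely packaged in \eqref{symbstft} and in Proposition \ref{proposition4}; once those are granted, the argument is a one-line Schur test. The only genuine point requiring a little care is the change of variables in the kernel: the map $(x,\o)\mapsto(x-\nabla_\eta\Phi(x',\o),\o)$ (for fixed $x',\o'$) and similarly in the $\o'$-variable are diffeomorphisms with Jacobian bounded above and below by the non-degeneracy condition \eqref{detcond} and the bounds \eqref{phasedecay} on second derivatives; but for the $L^\infty \to L^\infty$ sup-estimate of the kernel this change of variables is not even needed — one only needs it for the $L^1$-version in the companion Proposition \ref{Minfty}'s dual statement. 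So for the present proposition the estimate is immediate: take the supremum over $\R^{4d}$ and the shift in the arguments is irrelevant. I expect the proof in the paper to consist of precisely the three displayed lines above plus the density remark.
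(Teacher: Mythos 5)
Your proof is correct and follows essentially the same route as the paper: the inversion formula turns $V_g(T_\sigma f)$ into an integral operator whose kernel is the Gabor matrix, Lemma \ref{proschur}(i) reduces matters to an $L^\infty(\R^{4d})$ bound on that kernel, and \eqref{symbstft} together with Proposition \ref{proposition4} (with $p=q=\infty$, $m\equiv 1$) gives exactly that bound, the shift in the arguments being irrelevant for a supremum. The only difference is cosmetic: the paper justifies pulling the distributional pairing inside the vector-valued integral of the inversion formula by invoking \cite[Theorem 5.1.1]{hormander1}, a step you pass over silently.
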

\begin{proof}
Let $g \in \cS(\rd)$ with $\| g \|_{L^2}=1$.
For $\varphi \in \cS(\rd)$, the map $f \rightarrow \la T f, \varphi \ra$, denoted $u_\varphi$, belongs to $\cS'(\rd)$.
Since $u_\varphi$ is linear (rather than antilinear) we obtain from the inversion formula \eqref{treduetre} and \cite[Theorem 5.1.1]{hormander1}
\begin{align*}
\la T f, \varphi \ra
& = \la u_\varphi, \overline f \ra
= \la u_\varphi, \int_{\rdd} \overline{V_g f (x,\eta) M_\eta T_x g (\cdot)} \, dx \, d\eta \ra \\
& = \int_{\rdd} \la u_\varphi, \overline{M_\eta T_x g} \ra \, V_g f (x,\eta)  \, dx \, d\eta \\
& = \int_{\rdd} \la T \, M_\eta T_x g, \varphi \ra \, V_g f (x,\eta) \, dx \, d\eta.
\end{align*}
It follows that, for $f \in \cS(\rd)$,
\[
V_g (Tf)(x',\o')=\int_{\R^{2d}}\langle Tg_{x,\o},
g_{x',\o'}\rangle \, V_g f(x,\o) \, dx\,d\o.
\]
The desired estimate \eqref{caseMinftyinfty} thus follows if we can prove that the map $K_T$
defined by
\[
K_T G(x',\o')=\int_{\R^{2d}}\langle Tg_{x,\o},
g_{x',\o'} \rangle \, G(x,\o) \, dx\,d\o
\]
is continuous from $L^1(\rdd)$ into $L^\infty(\rdd)$. By Lemma
\ref{proschur} \emph{(i)} it suffices to prove that its integral kernel
\[
K_T(x',\o';x,\o)=\langle Tg_{x,\o}, g_{x',\o'}\rangle
\]
satisfies $ K_T\in L^\infty(\R^{4d})$. By \eqref{symbstft} we have
\begin{align*}
|K_T(x',\o';x,\o)|&=|V_{\Psi_{(x',\o)}}\sigma(x',\o,\o'-\nabla_x\Phi(x',\o),x-\nabla_\eta
\Phi(x',\o))|\\&\leq \sup_{z\in\rdd}
|V_{\Psi_{z}}\sigma(x',\o,\o'-\nabla_x\Phi(x',\o),x-\nabla_\eta,
\Phi(x',\o))|
\end{align*}
and hence
\begin{align*}
& \sup_{(x,\o,x',\o')\in\R^{4d}}|K_T(x',\o';x,\o)| \\
& \leq\sup_{(x,\o,x',\o')\in\R^{4d}}
\sup_{z\in\rdd}
|V_{\Psi_{z}}\sigma(x',\o,\o'-\nabla_x\Phi(x',\o),x-\nabla_\eta
\Phi(x',\o))|\\
&=\sup_{(x,\o,x',\o')\in\R^{4d}} \sup_{z\in\rdd}
|V_{\Psi_{z}}\sigma(x',\o,\o',x)|\asymp \|\sigma\|_{M^{\infty}}
\end{align*}
by the characterization \eqref{p1}.
\end{proof}

Proceeding similarly as in the proof of Proposition \ref{Minfty}, using
Lemma \ref{proschur} $(ii)$ instead of $(i)$, gives the following dual result.

\begin{proposition}\label{M1}
Consider a tame phase function $\Phi$  and suppose $\sigma \in M^1(\R^{2d})$. Then $T_\sigma$
extends to a bounded operator from $\mathcal{M}^\infty(\R^d)$ to
$\mathcal{M}^1(\R^d)$, and
\begin{equation}\label{caseM11}
\| T_\sigma f \|_{\mathcal{M}^1(\R^d)} \lesssim \| \sigma
\|_{M^{1}(\R^{2d})} \| f \|_{\mathcal{M}^\infty(\R^d)}.
\end{equation}
\end{proposition}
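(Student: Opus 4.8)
The plan is to mimic the proof of Proposition \ref{Minfty} almost verbatim, replacing the $L^\infty$ Schur test by its $L^1$ counterpart. Concretely, for $f\in\cS(\rd)$ the same manipulation with the inversion formula \eqref{treduetre} gives
\[
V_g(Tf)(x',\o')=\int_{\rdd}\la Tg_{x,\o},g_{x',\o'}\ra\,V_gf(x,\o)\,dx\,d\o,
\]
so the estimate \eqref{caseM11} reduces to showing that the integral operator $K_T$ with kernel $K_T(x',\o';x,\o)=\la Tg_{x,\o},g_{x',\o'}\ra$ is continuous from $L^\infty(\rdd)$ into $L^1(\rdd)$. By Lemma \ref{proschur}~$(ii)$ it suffices that $K_T\in L^1(\R^{4d})$.

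The second step is the kernel estimate. By the Gabor-matrix formula \eqref{symbstft},
\[
|K_T(x',\o';x,\o)|=|V_{\Psi_{(x',\o)}}\sigma(x',\o,\o'-\nabla_x\Phi(x',\o),x-\nabla_\eta\Phi(x',\o))|\leq\sup_{z\in\rdd}|V_{\Psi_z}\sigma(x',\o,\o'-\nabla_x\Phi(x',\o),x-\nabla_\eta\Phi(x',\o))|.
\]
Now I integrate over $\R^{4d}$ and perform, for each fixed $(x',\o)$, the change of variables $u\mapsto \o'-\nabla_x\Phi(x',\o)$ and $v\mapsto x-\nabla_\eta\Phi(x',\o)$, which are translations in $\o'$ and $x$ respectively and hence have unit Jacobian. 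This yields
\[
\|K_T\|_{L^1(\R^{4d})}\leq\int_{\R^{4d}}\sup_{z\in\rdd}|V_{\Psi_z}\sigma(x',\o,\o',x)|\,dx\,d\o\,dx'\,d\o'=\Big\|\sup_{z\in\rdd}|V_{\Psi_z}\sigma|\Big\|_{L^1(\R^{4d})}\asymp\|\sigma\|_{M^1(\R^{2d})}
\]
by the characterization \eqref{p1} with $p=q=1$ and $m\equiv1$. Combining with Lemma \ref{proschur}~$(ii)$ gives boundedness of $T_\sigma$ from $\mathcal{M}^\infty(\rd)$ to $\mathcal{M}^1(\rd)$ with the claimed bound, and the extension from $\cS(\rd)$ to all of $\mathcal{M}^\infty(\rd)$ is automatic since $\cS$ is dense there.

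The only point demanding a little care — and thus the main obstacle, though a mild one — is justifying the change of variables and the interchange of the $\sup_z$ with the integral: one must check that $(x,\o,x',\o')\mapsto\sup_{z}|V_{\Psi_z}\sigma(\cdots)|$ is measurable and that the substitution $\o'\mapsto\o'-\nabla_x\Phi(x',\o)$, $x\mapsto x-\nabla_\eta\Phi(x',\o)$ is admissible for each fixed $(x',\o)$; here one uses that $\Phi$ is tame, so $\nabla_x\Phi$ and $\nabla_\eta\Phi$ are smooth, and Tonelli's theorem applies to the nonnegative integrand. Measurability of the supremum over $z\in\rdd$ follows from continuity of $z\mapsto V_{\Psi_z}\sigma$ together with separability of $\rdd$, exactly as implicitly used in Proposition \ref{Minfty} and in \cite{CGN}. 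With these routine verifications the dual statement follows by the stated ``proceeding similarly'' argument.
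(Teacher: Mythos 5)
Your proof is correct and follows exactly the route the paper intends: it repeats the argument of Proposition \ref{Minfty} with Lemma \ref{proschur}~$(ii)$ in place of $(i)$, the only genuinely new ingredient being the unit-Jacobian translation $(x,\o')\mapsto(x-\nabla_\eta\Phi(x',\o),\,\o'-\nabla_x\Phi(x',\o))$ for fixed $(x',\o)$, which correctly reduces $\|K_T\|_{L^1(\R^{4d})}$ to $\|\sup_z|V_{\Psi_z}\sigma|\|_{L^1(\R^{4d})}\asymp\|\sigma\|_{M^1}$ via \eqref{p1}. The paper leaves all of this implicit, so your write-up simply supplies the details it omits.
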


\begin{remark}
We notice that Proposition \ref{Minfty} and Proposition \ref{M1} hold with weaker assumptions on the real-valued phase function.
In fact, conditions (i), (ii) and (iii) of Definition \ref{deffase} may evidently be relaxed to $\Phi \in C^\infty(\R^{2d})$
and $\sup_{|\alpha|=2} |\partial^\alpha \Phi| \lesssim v_N$ for some $N>0$.
A similar result, with assumptions on $\Phi$ that are weaker than (i), (ii) and (iii) but stronger than $\Phi \in C^2(\R^{2d})$ and $\sup_{|\alpha|=2} |\partial^\alpha \Phi| \lesssim v_{N}$,
is shown in \cite[Theorem 2.7]{concetti2}.
More precisely, \cite[Theorem 2.7]{concetti2} treats a more general type of FIO whose phase function depends on three variables as
\begin{equation}\nonumber
Tf(x) = \iint_{\R^{2 d}} e^{2\pi i \, \varphi(x,y,\xi)} \ \sigma(x,\xi) \ f(y) \ dy \ d\xi.
\end{equation}
Specializing to our situation, we have $\varphi(x,y,\xi) = \Phi(x,\xi)-y \cdot \xi$, and the sufficient condition on $\Phi$ in \cite[Theorem 2.7]{concetti2} is $\partial ^\alpha \Phi \in M^{\infty,1}$ for all $\alpha \in \mathbb N^d$ such that $|\alpha|=2$.

There is also a version of this result for weighted modulation spaces in \cite[Proposition 3.1 (3)]{toftconcetti1}.
The symbol space, as well as the spaces between which the operator acts, are then weighted modulation spaces, with polynomially bounded weights that are related as described in \cite[Proposition 3.1 (3)]{toftconcetti1}.
\end{remark}

\subsection{Results based on Theorems \ref{cont} and \ref{elefabio}}

Propositions \ref{Minfty} and \ref{M1} admit us to prove the following interpolation-theoretic consequences of
Theorems \ref{cont} and \ref{elefabio}.
First we discuss the case when both the domain and the range are equal-index modulation spaces.

\begin{theorem}\label{fiointerpolation1}
Consider a tame phase function $\Phi$,
and let $1 \leq p,q,r,t \leq \infty$.
If
\begin{equation}\label{indexcondition1}
q \leq \min(t,r') \quad \mbox{and} \quad 1/r-1/t \geq 1-1/p-1/q,
\end{equation}
and $\sigma \in M^{p,q}(\R^{2d})$, then $T$ extends to a bounded operator from
$\mathcal M^r(\R^d)$ to $\mathcal M^t(\R^d)$, with
\begin{equation}\label{contstatement1}
\| T f \|_{\mathcal M^t(\rd)} \lesssim \| \sigma \|_{M^{p,q}(\rdd)} \| f \|_{\mathcal M^r(\rd)}.
\end{equation}
\end{theorem}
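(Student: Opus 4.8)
The strategy is to obtain \eqref{contstatement1} by complex interpolation among four "corner" estimates, and then to extend the range of validity using the inclusion relations \eqref{modspaceincl1}. The four corners are: (a) Proposition~\ref{Minfty}, which gives $T\colon \mathcal{M}^1 \to \mathcal{M}^\infty$ for $\sigma \in M^\infty$; (b) Proposition~\ref{M1}, which gives $T\colon \mathcal{M}^\infty \to \mathcal{M}^1$ for $\sigma \in M^1$; (c) Theorem~\ref{cont}, which gives $T\colon \mathcal{M}^r \to \mathcal{M}^r$ for every $r$ when $\sigma \in M^{\infty,1}$ --- but note this requires the extra hypothesis \eqref{phasegrad}, so we cannot use it directly here; instead we use the special case $r=2$, namely $T\colon \mathcal{M}^2=L^2 \to L^2$ for $\sigma\in M^{\infty,1}$, which in fact holds for every tame phase function (this is the Calder\'on--Vaillancourt-type result underlying Theorem~\ref{cont}, and also follows from Theorem~\ref{elefabio}(i) with $r_1=r_2=2$, $s_1=s_2=0$); (d) the $L^2$-boundedness again, or alternatively Theorem~\ref{elefabio}(i) at another index. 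The key observation is that the bilinear dependence $(\sigma, f) \mapsto Tf$ --- linear in $\sigma$ and linear in $f$ --- allows us to interpolate simultaneously in the symbol space $M^{p,q}$ and in the domain/target spaces $\mathcal{M}^r, \mathcal{M}^t$, using \eqref{interpolation} and the bilinear complex interpolation theorem.

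\textbf{Key steps.} First I would fix the two "basic" bounded bilinear maps to interpolate between. From Proposition~\ref{Minfty}: $\|Tf\|_{\mathcal{M}^\infty} \lesssim \|\sigma\|_{M^\infty}\|f\|_{\mathcal{M}^1}$. From the $L^2$-boundedness for $\sigma\in M^{\infty,1}$ (tame phase, no extra assumption): $\|Tf\|_{\mathcal{M}^2} \lesssim \|\sigma\|_{M^{\infty,1}}\|f\|_{\mathcal{M}^2}$. Interpolating these two with parameter $\theta\in(0,1)$ via \eqref{interpolation} on all three scales gives, with $1/p = \theta/\infty + (1-\theta)/\infty = 0$ wait --- here one must be careful: to reach general $p,q$ one needs more than two corners. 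So more precisely: interpolate Proposition~\ref{Minfty} ($M^\infty \to \mathcal B(\mathcal{M}^1,\mathcal{M}^\infty)$) with Proposition~\ref{M1} ($M^1 \to \mathcal B(\mathcal{M}^\infty,\mathcal{M}^1)$) to get $M^p \to \mathcal B(\mathcal{M}^{p'},\mathcal{M}^p)$ for $1\le p\le 2$; then interpolate \emph{that} family with the $L^2$-estimate ($M^{\infty,1}\to \mathcal B(L^2,L^2)$) to bring in a second exponent and reach $M^{p,q} \to \mathcal B(\mathcal{M}^r,\mathcal{M}^t)$ for the indices lying on the appropriate face; finally invoke \eqref{modspaceincl1} to pass from the boundary case (equality in \eqref{indexcondition1}) to the general case (inequality $1/r-1/t \geq 1-1/p-1/q$), since enlarging the target or shrinking the domain only helps, and similarly monotonicity in $p,q$ of the symbol norm together with $q \le \min(t,r')$ handles the second condition. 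Throughout, one checks that the arithmetic of the interpolation identities $1/p = (1-\theta)/p_0 + \theta/p_1$ etc., combined on the symbol scale and the two function-space scales, produces exactly the relation $1/r - 1/t = 1 - 1/p - 1/q$ on the interpolation face, and that the constraint $q \le \min(t,r')$ is precisely what keeps all the auxiliary exponents in $[1,\infty]$.

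\textbf{Main obstacle.} The hard part is bookkeeping: choosing the right finite collection of corner estimates and the right interpolation parameters so that the three scales (symbol, domain, target) move in lockstep and the union of the resulting faces covers exactly the region cut out by \eqref{indexcondition1}. In particular, one must verify that the boundary hyperplane $1/r - 1/t = 1 - 1/p - 1/q$ is swept out, and that the side condition $q \le \min(t,r')$ is not merely sufficient for the interpolation to close up but is genuinely forced by requiring the intermediate exponents to be admissible --- this is where the extra assumption \eqref{phasegrad} in Theorem~\ref{cont} must be avoided, by never using Theorem~\ref{cont} except at $r=2$ where it reduces to plain $L^2$-boundedness valid for all tame $\Phi$. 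Once the face is obtained, extending to the full region by \eqref{modspaceincl1} is routine.
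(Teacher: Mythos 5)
There is a genuine gap, and it is precisely at the point where you retreat from Theorem \ref{cont} to the single $L^2$ estimate. Your worry about the hypothesis \eqref{phasegrad} is legitimate for Theorem \ref{cont}, but the paper avoids it differently: Theorem \ref{elefabio} $(i)$ already gives, for \emph{any} tame phase and $\sigma \in M^{\infty,1}(\R^{2d})$, boundedness of $T$ on $\mathcal M^s(\R^d)$ for \emph{every} $s \in [1,\infty]$ (take $r_1=r_2=s$, $s_1=s_2=0$; the condition \eqref{phasegrad} is only needed when $r_1 \neq r_2$, which never occurs in the present theorem since domain and target are equal-index spaces). The paper interpolates this one-parameter family of diagonal estimates first with Proposition \ref{Minfty} and then with Proposition \ref{M1}, which produces a three-parameter family sweeping out the whole face $1/r-1/t = 1-1/p-1/q$ subject to $q \leq \min(p,t,r')$, and then relaxes via \eqref{modspaceincl1}. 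By contrast, your concrete plan uses only the three corners $M^\infty \times \cM^1 \to \cM^\infty$, $M^1 \times \cM^\infty \to \cM^1$ and $M^{\infty,1} \times L^2 \to L^2$; their convex hull (in the coordinates $(1/p,1/q,1/r,1/t)$) is a $2$-simplex, too small to cover the claimed index set. For instance $(p,q,r,t)=(\infty,1,4,4)$ satisfies \eqref{indexcondition1}, but every point of your interpolation family with $p=\infty$, $q=1$ is forced to be the $L^2$ point $r=t=2$, and the embeddings \eqref{modspaceincl1} cannot pass from $\cM^2 \to \cM^2$ to $\cM^4 \to \cM^4$ (one would need $r_0 \geq 4$ and $t_0 \leq 4$ at a proven point, which your family does not supply). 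Your parenthetical ``or alternatively Theorem \ref{elefabio}(i) at another index'' gestures at the fix, but a single additional index still yields only a $3$-simplex that covers the region only if the extra diagonal points are taken at $s=1$ and $s=\infty$ (equivalently, the full family $s \in [1,\infty]$); you never commit to this, and your executed steps use $s=2$ alone.

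Two smaller points: interpolating Propositions \ref{Minfty} and \ref{M1} against each other gives $M^p \times \cM^{p'} \to \cM^p$ for all $p \in [1,\infty]$, not only $1 \leq p \leq 2$; and your closing claim that $q \leq \min(t,r')$ is ``genuinely forced'' by admissibility of intermediate exponents is not something the interpolation argument establishes --- the necessity of that condition is proved separately in the paper (Theorem \ref{Charpseudo}, via explicit chirp-type test symbols), not as a by-product of the sufficiency proof.
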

\begin{proof}
If $\sigma \in M^{\infty,1}(\R^{2d})$ then $T$ extends, according to Theorem \ref{elefabio} $(i)$, to a bounded operator on $\mathcal M^s(\R^d)$ for all $1 \leq s \leq \infty$ and
\begin{equation}\label{caseMinfty0}
\| T_\sigma f \|_{\mathcal M^s(\R^d)} \lesssim \| \sigma \|_{M^{\infty,1}(\R^{2d})} \| f \|_{\mathcal M^s(\R^d)}.
\end{equation}
Regarding $T$ as the bilinear map $(\sigma,f) \mapsto T f$, \eqref{caseMinfty0} and \eqref{caseMinftyinfty} of Proposition \ref{Minfty} says that $T$ is continuous
\begin{equation*}
\begin{aligned}
M^{\infty,1}(\R^{2d}) & \times \mathcal M^s(\R^d) & \to & \quad
\mathcal M ^s(\R^d) \quad \mbox{for} \quad 1 \leq s \leq \infty, \quad \mbox{and} \\
M^\infty(\R^{2d}) & \times \cM^1(\R^d) & \to & \quad \cM^\infty(\R^d)
\end{aligned}
\end{equation*}
Using multi-linear complex interpolation (cf. \cite[Theorem 4.4.1]{bergh-lofstrom})
and \eqref{interpolation}, it follows that the bilinear map $T$ is continuous
\begin{equation}\label{caseMinftyq}
T : \cM^{\infty,q}(\R^{2d}) \times \mathcal M^r(\R^d) \to \mathcal M^t(\R^d),
\end{equation}
for $q,r,t \in [1,\infty]$ such that $1/r - 1/t = 1-1/q$, $q\leq \min(t,r')$ and $r \leq t$.
Likewise, interpolation between \eqref{caseMinftyq} and \eqref{caseM11} of Proposition \ref{M1} gives
\eqref{contstatement1} for $p,q,r,t \in [1,\infty]$ such that
$$
q \leq \min(p,t,r') \quad \mbox{and} \quad 1/r-1/t = 1-1/p-1/q.
$$
Due to the embeddings \eqref{modspaceincl1}, we may relax these assumptions on $r$ and $t$ (possibly decreasing $r$ and increasing $t$), keeping $p,q$ fixed, into
\begin{equation}\label{assumption1}
q \leq \min(p,t,r') \quad \mbox{and} \quad 1/r-1/t \geq 1-1/p-1/q,
\end{equation}
and \eqref{contstatement1} still holds true.
Finally, again using the embeddings \eqref{modspaceincl1} in order to relax the conditions on $p$ and $q$, possibly decreasing $p$ and $q$ while keeping $r,t$ fixed, it can be verified that the result extends to all $p,q,r,t \in [1,\infty]$ such that
$$
q \leq \min(t,r'), \quad 1/r-1/t \geq 1-1/p-1/q.
$$
\end{proof}

\begin{remark}
For $p=\infty$ and $1 < q < \infty$, the sufficient condition on the symbol in Theorem \ref{fiointerpolation1} should be
$\sigma \in \cM^{\infty,q}(\rdd)$ rather than $\sigma \in M^{\infty,q}(\rdd)$.
This small modification is understood in all results of this paper, but not spelled out in order not to burden the presentation.
\end{remark}

\begin{remark}
The sufficient conditions in Theorem \ref{boundedphasegradient} are also necessary. Indeed, choose $\Phi(x,\eta) = x \cdot \eta$ which is tame. Then the corresponding  FIO $T$ reduces to a pseudodifferential operator and the necessary conditions are provided by Theorem \ref{Charpseudo}, written for the case $r_1=r_2=r$, $t_1=t_2=t$.
\end{remark}

\begin{remark}
We notice that a related result for weighted modulation spaces follows from a combination of \cite[Proposition 1.10]{toftconcetti1} and \cite[Theorem 2.10]{toftconcetti1}.
In particular, it follows  that a version of the inequality \eqref{caseMinfty0} holds for weighted spaces and symbols,
for certain combinations of weights, and $1 < s < \infty$,
when the phase function $\Phi$ is tame.
However, we inform the reader that the condition on the phase function
\begin{equation}\nonumber
   \left|{\rm det}\,
\left(\frac{\partial^2 \varphi}{\partial
y_i\partial \xi_l}\Big|_{
(x,y,\xi)}\right)\right| \geq \delta \quad \forall
(x,y,\xi)\in \R^{3d}
\end{equation}
for some $\delta>0$,
specified as sufficient for the conclusions in \cite[Proposition 3.1]{toftconcetti1}, is correct
for parts (1), (2) and (3) of that proposition, but not for part (4).
\end{remark}

Next we treat FIOs acting on modulation spaces from $\cM^{r_1,r_2}$ to $\cM^{t_1,t_2}$ with possibly $r_1 \neq r_2$ or $t_1 \neq t_2$. In this case the assumptions of Theorem \ref{fiointerpolation1} are not enough to provide boundedness (see the counterexample in the Introduction). Instead we obtain  results by strenghtening either the phase (Theorem \ref{boundedphasegradient}) or the symbol (Theorem \ref{main}) hypotheses.

Since the arguments of the proof of the result below follow closely the proof of Theorem \ref{fiointerpolation1}, starting from Theorem \ref{cont} and using Propositions \ref{Minfty} and \ref{M1}, we omit its proof.

\begin{theorem}\label{boundedphasegradient}
Let $1 \leq p,q,r_1,r_2,t_1,t_2 \leq \infty$, let the phase
function $\Phi$ be tame and satisfy \eqref{phasegrad},
and suppose \eqref{indicitutti} and \eqref{indiceq} hold true.
If $\sigma\in M^{p,q}(\rdd)$ then the corresponding operator $T$ extends to a bounded
operator from $\mathcal M^{r_1,r_2}(\R^d)$ to $\mathcal
M^{t_1,t_2}(\R^d)$, with
\begin{equation}\label{normaoperatore7}
\|Tf\|_{\mathcal{M}^{t_1,t_2}} \lesssim
\|\sigma\|_{M^{p,q}}\|f\|_{\mathcal{M}^{r_1,r_2}}.
\end{equation}
\end{theorem}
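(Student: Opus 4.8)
The plan is to follow essentially the same interpolation scheme that proves Theorem \ref{fiointerpolation1}, but starting from Theorem \ref{cont} (which already gives boundedness on $\mathcal M^{r_1,r_2}$ with $r_1\neq r_2$ allowed) in place of Theorem \ref{elefabio}$(i)$, and keeping the two indices independent throughout. First I would record the two "endpoint" facts: under the hypotheses on $\Phi$ (tame and satisfying \eqref{phasegrad}), Theorem \ref{cont} gives that a symbol in $M^{\infty,1}(\rdd)$ produces an operator bounded on $\mathcal M^{r_1,r_2}(\rd)$ for all $1\le r_1,r_2\le\infty$, with norm controlled by $\|\sigma\|_{M^{\infty,1}}$; and Proposition \ref{Minfty} (resp.\ Proposition \ref{M1}) gives boundedness from $\mathcal M^1$ to $\mathcal M^\infty$ with a symbol in $M^\infty$ (resp.\ from $\mathcal M^\infty$ to $\mathcal M^1$ with a symbol in $M^1$). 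Note that Propositions \ref{Minfty} and \ref{M1} require only that $\Phi$ be tame, so they remain available here.

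Next I would regard $T$ as the bilinear map $(\sigma,f)\mapsto T_\sigma f$ and apply multilinear complex interpolation (\cite[Theorem 4.4.1]{bergh-lofstrom}) together with the interpolation identity \eqref{interpolation} for modulation spaces. Interpolating between the continuity $M^{\infty,1}(\rdd)\times\mathcal M^{r_1,r_2}(\rd)\to\mathcal M^{r_1,r_2}(\rd)$ (valid for all $r_1,r_2$) and $M^{\infty}(\rdd)\times\mathcal M^{1}(\rd)\to\mathcal M^{\infty}(\rd)$ yields continuity $\mathcal M^{\infty,q}(\rdd)\times\mathcal M^{r_1,r_2}(\rd)\to\mathcal M^{t_1,t_2}(\rd)$ whenever $1/r_i-1/t_i=1-1/q$ for $i=1,2$, together with $q\le\min(t_1,t_2,r_1',r_2')$ and $r_i\le t_i$; here one picks the interpolation parameter $\theta$ so that the target indices $t_i$ satisfy $1/t_i=(1-\theta)/r_i$ and simultaneously $1/q=1-\theta$, which forces the common value $1-1/q$ of $1/r_i-1/t_i$. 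A second interpolation between this statement and the $M^1\to\mathcal M^\infty$, sorry, the $M^{1}(\rdd)\times\mathcal M^{\infty}(\rd)\to\mathcal M^{1}(\rd)$ continuity of Proposition \ref{M1} then produces \eqref{normaoperatore7} for exponents satisfying $q\le\min(p,t_1,t_2,r_1',r_2')$ and $1/r_i-1/t_i=1-1/p-1/q$, $i=1,2$. Finally, the embeddings \eqref{modspaceincl1} let me relax the equalities $1/r_i-1/t_i=1-1/p-1/q$ to the inequalities \eqref{indicitutti} (decreasing $r_i$ or increasing $t_i$) and then also drop the superfluous constraint $q\le p$ (decreasing $p$ and $q$ with $r_i,t_i$ fixed), arriving at exactly \eqref{indicitutti} and \eqref{indiceq}.

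The step I expect to be most delicate — though it is essentially bookkeeping rather than a genuine obstacle — is tracking the independent index pairs $(r_1,r_2)$ and $(t_1,t_2)$ through the two-fold interpolation and checking that the resulting admissible region is precisely the conjunction of \eqref{indicitutti} and \eqref{indiceq}, with no spurious extra conditions surviving the final relaxation by \eqref{modspaceincl1}. One must verify in particular that the interpolation forces the \emph{same} scalar defect $1-1/p-1/q$ in both the first and second components (this is why \eqref{indicitutti} appears as two inequalities sharing one right-hand side), and that the condition $q\le\min(t_1,t_2,r_1',r_2')$ is exactly what guarantees the intermediate spaces $\mathcal M^{\infty,q}$, $\mathcal M^{p,q}$ are the correct complex-interpolation spaces with $q$ in the right range. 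Since all of this runs in complete parallel to the proof of Theorem \ref{fiointerpolation1} — the only change being the use of Theorem \ref{cont} instead of Theorem \ref{elefabio}$(i)$ and the retention of two independent indices — the proof is omitted, as stated.
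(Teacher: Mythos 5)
Your proposal is correct and follows exactly the route the paper intends: the paper omits the proof of Theorem \ref{boundedphasegradient} precisely because it "follows closely the proof of Theorem \ref{fiointerpolation1}, starting from Theorem \ref{cont} and using Propositions \ref{Minfty} and \ref{M1}," which is the two-fold multilinear complex interpolation plus embedding-relaxation argument you describe, carried out componentwise in the two independent index pairs.
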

\begin{remark}\label{3.9}
The sufficient conditions in Theorem \ref{boundedphasegradient} are also necessary. Indeed, choose $\Phi(x,\eta) = x \cdot \eta$ which is tame and satisfies \eqref{phasegrad}. Then the corresponding  FIO $T$ reduces to a pseudodifferential operator and the necessary conditions are provided by Theorem \ref{Charpseudo}.
\end{remark}

As a consequence of Theorem  \ref{boundedphasegradient} we obtain the following result for Wiener amalgam spaces.
\begin{corollary}
Under the assumptions of Theorem \ref{boundedphasegradient}, with \eqref{phasegrad} replaced by
\begin{equation}\label{fase-bis}
\sup_{x,\eta,\eta'\in\rd}\left|
\nabla_{\eta}\Phi(x,\eta)-\nabla_{\eta}\Phi(x,\eta')\right|<\infty,
\end{equation}
the corresponding operator $T$ extends to a bounded operator from the space  $\mathcal{W}(\cF L^{r_1},L^{r_2})(\rd)$ to $\mathcal{W}(\cF L^{t_1},L^{t_2})(\rd)$, with
\begin{equation}\label{normaoperatore3Wpq}
\| T f \|_{\mathcal{W}(\cF L^{t_1},L^{t_2})}
\lesssim\| \sigma \|_{M^{p,q}} \| f \|_{\mathcal{W}(\cF L^{r_1},L^{r_2})}.
\end{equation}
\end{corollary}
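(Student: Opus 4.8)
The plan is to transfer the modulation space estimate of Theorem~\ref{boundedphasegradient} to Wiener amalgam spaces by the Fourier conjugation and duality device set up in the subsection preceding the corollary. Conjugating $T$ by $\cF$ and passing to the adjoint, one has by \eqref{duality1} and \eqref{Ttildeexpr} that the sought bound \eqref{normaoperatore3Wpq} is equivalent to the modulation space estimate
\[
\| \tilde T^\ast_{\tilde\sigma} f \|_{\mathcal{M}^{r'_1,r'_2}}
\lesssim
\|\tilde \sigma\|_{M^{p,q}}\|f\|_{\mathcal{M}^{t'_1,t'_2}},
\]
where $\tilde T^\ast_{\tilde\sigma}$ is the FIO with symbol $\tilde\sigma(x,\eta)=\overline{\sigma(-\eta,x)}$ and phase $\Psi(x,\eta)=-\Phi(-\eta,x)$, and where $\|\tilde\sigma\|_{M^{p,q}}\asymp\|\sigma\|_{M^{p,q}}$ by \cite[Lemma~2.10]{fiomodulation1}. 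Thus it suffices to check that this FIO satisfies the hypotheses of Theorem~\ref{boundedphasegradient} for the action $\mathcal{M}^{t'_1,t'_2}\to\mathcal{M}^{r'_1,r'_2}$, i.e. that $\Psi$ is tame and satisfies \eqref{phasegrad}, and that the index conditions \eqref{indicitutti}, \eqref{indiceq} hold with the exponents in the correct slots.

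The crux is the behaviour of the phase under the $x\leftrightarrow\eta$ exchange induced by Fourier conjugation. From $\Psi(x,\eta)=-\Phi(-\eta,x)$ the chain rule shows that $\partial^\alpha\Psi$ is bounded for $|\alpha|\ge2$ whenever the same holds for $\Phi$, and identifies $|\det\partial^2_{x\eta}\Psi(x,\eta)|$ with $|\det\partial^2_{x\eta}\Phi(-\eta,x)|$, so \eqref{detcond} passes from $\Phi$ to $\Psi$ and $\Psi$ is tame. Moreover $\nabla_x\Psi(x,\eta)=-(\nabla_\eta\Phi)(-\eta,x)$, so condition \eqref{phasegrad} for $\Psi$, namely $\sup_{x,x',\eta}|\nabla_x\Psi(x,\eta)-\nabla_x\Psi(x',\eta)|<\infty$, reads $\sup_{x,x',\eta}|(\nabla_\eta\Phi)(-\eta,x)-(\nabla_\eta\Phi)(-\eta,x')|<\infty$, which after the substitution $-\eta\to\eta$ is exactly the hypothesis \eqref{fase-bis} on $\Phi$. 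Finally the index conditions are invariant under $(r_i,t_i)\mapsto(t'_i,r'_i)$: \eqref{indicitutti} becomes $1/t'_i-1/r'_i\ge1-1/p-1/q$, which is again $1/r_i-1/t_i\ge1-1/p-1/q$, and \eqref{indiceq} becomes $q\le\min(r'_1,r'_2,(t'_1)',(t'_2)')=\min(r'_1,r'_2,t_1,t_2)$, i.e. \eqref{indiceq} once more.

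With these verifications Theorem~\ref{boundedphasegradient} applies to $\tilde T^\ast_{\tilde\sigma}$ and yields the displayed modulation space bound; reversing the chain of equivalences above then gives \eqref{normaoperatore3Wpq}. The only genuinely delicate point is the bookkeeping in the second paragraph, namely the observation that Fourier conjugation trades the $\nabla_x$-control \eqref{phasegrad} for the $\nabla_\eta$-control \eqref{fase-bis}; once this is established, the rest is a routine application of the transfer principle already recorded in the excerpt.
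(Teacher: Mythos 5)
Your proposal is correct and is exactly the argument the paper intends: the corollary is stated without proof, relying on the transfer principle of the subsection on Wiener amalgam spaces, and your write-up fills in the details of that principle (the identification of the adjoint conjugated operator via \eqref{Ttildeexpr}, the verification that the swapped phase $-\Phi(-\eta,x)$ is tame and that \eqref{phasegrad} for it is precisely \eqref{fase-bis} for $\Phi$, the symbol norm equivalence from \cite[Lemma~2.10]{fiomodulation1}, and the invariance of the index conditions under $(r_i,t_i)\mapsto(t_i',r_i')$). No gaps.
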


If we do not assume the condition \eqref{phasegrad} on the phase, then similar FIO boundedness results can still be obtained by asking for more decay at infinity of the corresponding symbol. This means that we replace the unweighted modulation spaces $M^{p,q}$ by  weighted spaces.

\begin{theorem}\label{main}
Let $1 \leq p,q,r_1,r_2,t_1,t_2 \leq \infty$ and suppose \eqref{indicitutti} holds.
Consider a tame phase function $\Phi$, and a
symbol $\sigma\in M_{v_{s_1,s_2}\otimes1}^{p,q}(\rdd)$,
$s_1,s_2\in\R$. Suppose furthermore that either of the following two requirements are satisfied.

\begin{equation*}
\begin{aligned}
(i) \quad & s_2 \geq 0, \quad \mbox{and \ either} \\
& q \leq \min (t_1,t_2,r_1'), \quad r_2 \leq r_1, \quad \mbox{and} \quad s_1>d(1/r_2-1/r_1), \\
& \mbox{or} \\
& q \leq \min (t_2,r_1',r_2'), \quad t_2 \leq t_1, \quad \mbox{and} \quad s_1>d(1/t_2-1/t_1).
\end{aligned}
\end{equation*}
\begin{equation*}
\begin{aligned}
(ii) \quad & s_1 \geq 0, \quad \mbox{and \ either} \\
& q \leq \min (t_1,t_2,r_2'), \quad r_1 \leq r_2, \quad \mbox{and} \quad s_2 > d(1/r_1-1/r_2), \\
& \mbox{or} \\
& q \leq \min (t_1,r_1',r_2'), \quad t_1 \leq t_2, \quad \mbox{and} \quad s_2 > d(1/t_1-1/t_2).
\end{aligned}
\end{equation*}
Then $T$ extends to a bounded operator from $\mathcal
M^{r_1,r_2}(\R^d)$ to $\mathcal M^{t_1,t_2}(\R^d)$, and
\begin{equation*}
\|Tf\|_{\mathcal{M}^{t_1,t_2}} \lesssim
\|\sigma\|_{M_{v_{s_1,s_2} \otimes 1}^{p,q}}\|f\|_{\mathcal{M}^{r_1,r_2}}.
\end{equation*}
\end{theorem}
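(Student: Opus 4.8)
The plan is to argue exactly as in the proof of Theorem \ref{fiointerpolation1}, by multilinear complex interpolation applied to the bilinear map $(\sigma,f)\mapsto T_\sigma f$, but replacing the equal-index weighted endpoint estimate \eqref{caseMinfty0} (i.e.\ Theorem \ref{elefabio}$(i)$) by the \emph{different-index} weighted endpoint estimates of Theorems \ref{elefabio}$(ii)$--$(iii)$ and \ref{elefabioqppq}$(ii)$. All of these hold for any tame $\Phi$; one also uses that the ``swapped'' phase $\Phi^{\flat}(x,\eta)=-\Phi(-\eta,x)$ appearing in \eqref{Ttildeexpr} is again tame, together with the identity \cite[Lemma~2.10]{fiomodulation1}, that $\sigma\in M_{v_{s_1,s_2}\otimes1}^{p,q}(\rdd)$ iff $\overline{\sigma(-\eta,x)}\in M_{v_{s_2,s_1}\otimes1}^{p,q}(\rdd)$, and the modulation/Wiener--amalgam correspondence recalled before \eqref{Ttildeexpr}. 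The four alternatives in $(i)$ and $(ii)$ correspond to the four weighted endpoints one may feed to the interpolation: Theorem \ref{elefabio}$(ii)$ (extra decay in the space variable of the symbol, domain exponents $r_2\le r_1$) for the first alternative of $(i)$, Theorem \ref{elefabio}$(iii)$ (its mirror image under $x\leftrightarrow\eta$) for the first alternative of $(ii)$, and Theorem \ref{elefabioqppq}$(ii)$ --- whose target carries the index-swap $\mathcal{M}^{\rho_1,\rho_2}\to\mathcal{M}^{\rho_2,\rho_1}$, which is what makes the constraint fall on $t_1$ versus $t_2$ rather than on $r_1$ versus $r_2$ --- applied to $\Phi$ or to $\Phi^{\flat}$ for the second alternatives of $(i)$ and $(ii)$.

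Concretely, for the first alternative of $(i)$ I would fix auxiliary exponents $\rho_2<\rho_1$ in $[1,\infty]$ and weight exponents $a_1>d(1/\rho_2-1/\rho_1)$, $a_2\ge0$, and interpolate the three endpoints
\begin{equation*}
M_{v_{a_1,a_2}\otimes1}^{\infty,1}\times\mathcal{M}^{\rho_1,\rho_2}\to\mathcal{M}^{\rho_1,\rho_2},\quad M^{\infty}\times\mathcal{M}^{1}\to\mathcal{M}^{\infty},\quad M^{1}\times\mathcal{M}^{\infty}\to\mathcal{M}^{1}
\end{equation*}
(the first from Theorem \ref{elefabio}$(ii)$, the last two being \eqref{caseMinftyinfty} and \eqref{caseM11}): first the first two with parameter $\theta_1$, then the result with the third with parameter $\theta_2$, using bilinear complex interpolation \cite[Theorem~4.4.1]{bergh-lofstrom} and the identity \eqref{interpolation}. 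Writing $\nu=(1-\theta_1)(1-\theta_2)\in(0,1)$, a routine computation of the interpolated indices gives $1/p=\theta_2$, $1/q=\theta_2+\nu$, the relations $1/r_i=\nu/\rho_i+(1-\theta_2)\theta_1$ and $1/t_i=\nu/\rho_i+\theta_2$ (so $1/r_i-1/t_i=1-1/p-1/q$ for $i=1,2$), the bounds $q\le\min(t_1,t_2,r_1',r_2')$ for free, and the interpolated symbol weight $v_{\nu a_1,\nu a_2}\otimes1$. One then relaxes the conclusion by \eqref{modspaceincl1} exactly as in the proof of Theorem \ref{fiointerpolation1} (decreasing $r_i$, increasing $t_i$, then decreasing $p$ and $q$), which turns the identity into the inequality \eqref{indicitutti} and, since $\rho_2<\rho_1$ forces $r_1'<r_2'$, turns $q\le\min(t_1,t_2,r_1',r_2')$ into the stated $q\le\min(t_1,t_2,r_1')$; finally $M_{v_{s_1,s_2}\otimes1}^{p,q}\subseteq M_{v_{\nu a_1,\nu a_2}\otimes1}^{p,q}$ as soon as $\nu a_1\le s_1$, and we take $a_2=0$ so that $\nu a_2=0\le s_2$. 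The remaining three alternatives are handled in the same way with the weighted endpoint listed above; the only new feature is a longer relaxation step via \eqref{modspaceincl1} for the cases coming from Theorem \ref{elefabioqppq}$(ii)$, whose index-swapped target makes the interpolation produce the cross-relations $1/r_1-1/t_2=1/r_2-1/t_1=1-1/p-1/q$ first.

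The main obstacle is the weight bookkeeping. The auxiliary exponents are tied to the final ones by $1/\rho_2-1/\rho_1=(1/r_2-1/r_1)/\nu$ --- the same factor $\nu$ by which the interpolation shrinks the weight --- so the constraint $a_1>d(1/\rho_2-1/\rho_1)$ of Theorem \ref{elefabio}$(ii)$ is equivalent to $\nu a_1>d(1/r_2-1/r_1)$, while one simultaneously needs $\nu a_1\le s_1$; such a choice of $a_1$ exists precisely when $d(1/r_2-1/r_1)<s_1$, which is exactly the strict hypothesis in $(i)$. Each of the four strict inequalities $s_1>d(1/r_2-1/r_1)$, $s_1>d(1/t_2-1/t_1)$, $s_2>d(1/r_1-1/r_2)$, $s_2>d(1/t_1-1/t_2)$ enters at precisely this point and essentially nowhere else; everything else is the routine interpolation-plus-embedding bookkeeping of the proof of Theorem \ref{fiointerpolation1}. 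Minor care is needed with $\mathcal{M}^{p,q}$ versus $M^{p,q}$ when an exponent equals $\infty$ (as in the remark after Theorem \ref{fiointerpolation1}), and with the degenerate cases $r_1=r_2$ or $t_1=t_2$, where one instead invokes Theorem \ref{elefabio}$(i)$, Theorem \ref{fiointerpolation1} and Proposition \ref{Minfty} together with \eqref{modspaceincl1}.
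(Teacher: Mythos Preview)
Your treatment of the first alternatives of $(i)$ and $(ii)$ is essentially the paper's argument: interpolate Theorem~\ref{elefabio}$(ii)$ (resp.~$(iii)$) with Propositions~\ref{Minfty} and~\ref{M1}, then relax via \eqref{modspaceincl1}. The weight bookkeeping you describe for this part is correct.

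The gap is in your plan for the \emph{second} alternatives. Interpolating Theorem~\ref{elefabioqppq}$(ii)$ (domain $\mathcal{M}^{\rho_1,\rho_2}$, target $\mathcal{M}^{\rho_2,\rho_1}$, $\rho_2\le\rho_1$) with Propositions~\ref{Minfty} and~\ref{M1} produces intermediate target indices with $1/\tilde t_1=\nu/\rho_2+\theta_2$ and $1/\tilde t_2=\nu/\rho_1+\theta_2$, hence $\tilde t_1\le\tilde t_2$, and the interpolated weight threshold is $\nu a_1>d\,\nu(1/\rho_2-1/\rho_1)=d(1/\tilde t_1-1/\tilde t_2)$. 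This is the \emph{wrong sign}: the second alternative of $(i)$ requires $t_2\le t_1$ and $s_1>d(1/t_2-1/t_1)$. The cross-relations $1/\tilde r_1-1/\tilde t_2=1/\tilde r_2-1/\tilde t_1=1-1/p-1/q$ do not help here; relaxing from $\tilde t_1\le\tilde t_2$ to a target with $t_2\le t_1$ forces you back to the diagonal $\tilde t_1=\tilde t_2$, where Theorem~\ref{elefabioqppq}$(ii)$ offers nothing beyond Theorem~\ref{elefabio}$(i)$. The $\Phi^\flat$ device does not repair this, since it swaps $s_1\leftrightarrow s_2$ as well as the index roles, so it cannot turn a condition of the form ``$s_1>d(1/\tilde t_1-1/\tilde t_2)$'' into ``$s_1>d(1/t_2-1/t_1)$''.

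The paper's route is simpler and does not use Theorem~\ref{elefabioqppq} at all. After interpolating Theorem~\ref{elefabio}$(ii)$ with Propositions~\ref{Minfty} and~\ref{M1} one has $1/\tilde r_i-1/\tilde t_i=1-1/p-1/q$ with $\tilde r_2\le\tilde r_1$, and since the two differences are equal this \emph{automatically} gives $\tilde t_2\le\tilde t_1$ and $d(1/\tilde r_2-1/\tilde r_1)=d(1/\tilde t_2-1/\tilde t_1)$. Both alternatives of $(i)$ then drop out of the \emph{same} interpolated estimate by choosing the relaxation direction differently: pin $\tilde r_i$ near $r_i$ to obtain the first alternative, or pin $\tilde t_i$ near $t_i$ (precisely, take $1/\tilde t_i=\max(1/p,1/t_i)$, which lies in the admissible range because $q\le r_i'$ and $q\le t_2\le t_1$) to obtain the second. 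The same remark applies to $(ii)$ via Theorem~\ref{elefabio}$(iii)$. So the fix is not a new endpoint but the observation that the weight threshold from Theorem~\ref{elefabio}$(ii)$ can be rewritten in terms of either the $\tilde r_i$ or the $\tilde t_i$.
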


\begin{proof} The boundedness follows by complex interpolation, using  Theorem \ref{elefabio}, Propositions \ref{Minfty} and \ref{M1}, as detailed below.
By Theorem \ref{elefabio} $(i)$ and $(ii)$ we have
\begin{equation}\nonumber
\|Tf\|_{\mathcal{M}^{r_1,r_2}} \lesssim \|\sigma\|_{M_{v_{s_1,0} \otimes 1}^{\infty,1}}\|f\|_{\mathcal{M}^{r_1,r_2}}
\end{equation}
for $1 \leq r_2 \leq r_1 \leq \infty$ and $s_1 > d(1/r_2-1/r_1)$.
Proposition \ref{Minfty} and interpolation give continuity of
\begin{equation}\label{caseMinftyq1}
T : \cM_{v_{s,0} \otimes 1}^{\infty,q}(\R^{2d}) \times \mathcal M^{r_1,r_2}(\R^d) \to \mathcal M^{t_1,t_2}(\R^d),
\end{equation}
for $q,r_1,r_2,t_1,t_2 \in [1,\infty]$ and $s \in \R$ such that
$1/r_i - 1/t_i = 1-1/q$ for $i=1,2$, $r_2 \leq r_1$, $q \leq \min(t_2,r_1')$ and $s > d(1/r_2-1/r_1)$.
Next interpolation between \eqref{caseMinftyq1} and Proposition \ref{M1} gives
\begin{equation}\label{intermediateresult12}
\| T f \|_{\mathcal{M}^{t_1,t_2}}\lesssim\|\sigma\|_{\cM_{v_{s,0}\otimes1}^{p,q}}\|f\|_{\mathcal{M}^{r_1,r_2}}
\end{equation}
for $p,q,r_1,r_2,t_1,t_2 \in [1,\infty]$ and $s \in \R$ that satisfy
\begin{equation*}
\begin{aligned}
& q\leq \min (p,t_2,r_1'), \quad 1/r_i-1/t_i =
1-1/p-1/q \quad \mbox{for} \quad i=1,2, \\
& r_2 \leq r_1, \quad \mbox{and} \quad s>d(1/r_2-1/r_1).
\end{aligned}
\end{equation*}
Invoking \eqref{modspaceincl1}, we may relax the conditions on $t_i$, $r_i$ for $i=1,2$.
Thus we may possibly increase $t_i$ and decrease $r_i$ for $i=1,2$ while keeping $p,q$ fixed, such that
\eqref{indicitutti} holds, and either
\begin{equation*}
q \leq \min (p,t_1,t_2,r_1'), \quad r_2 \leq r_1, \quad \mbox{and} \quad s>d(1/r_2-1/r_1),
\end{equation*}
or
\begin{equation*}
q \leq \min (p,t_2,r_1',r_2'), \quad t_2 \leq t_1, \quad \mbox{and} \quad s>d(1/t_2-1/t_1),
\end{equation*}
while preserving \eqref{intermediateresult12}.
Again using \eqref{modspaceincl1}, these conditions may be further relaxed, in the sense of possibly decreasing $p$ and $q$ while keeping $r_i,t_i$, $i=1,2$, fixed, such that \eqref{indicitutti} holds, and either
\begin{equation*}
q \leq \min (t_1,t_2,r_1'), \quad r_2 \leq r_1, \quad \mbox{and} \quad s>d(1/r_2-1/r_1),
\end{equation*}
or
\begin{equation*}
q \leq \min (t_2,r_1',r_2'), \quad t_2 \leq t_1, \quad \mbox{and} \quad s>d(1/t_2-1/t_1),
\end{equation*}
while maintaining \eqref{intermediateresult12}.
Finally, another appeal to \eqref{modspaceincl1} shows that
$M_{v_{s_1,s_2}\otimes1}^{p,q}(\rdd) \subseteq M_{v_{s_1,0}\otimes1}^{p,q}(\rdd)$ which
proves the Theorem under assumption $(i)$.

If we instead use the assumption $(ii)$, the theorem is proved with a similar argument,
replacing Theorem \ref{elefabio} $(ii)$ by Theorem \ref{elefabio} $(iii)$ at the beginning.
\end{proof}

\begin{corollary}
Consider a phase $\Phi$ and a symbol $\sigma$ satisfying the assumptions of Theorem \ref{main}.
Then the corresponding operator $T$ extends to a bounded operator from $\mathcal{W}(\cF L^{r_1},L^{r_2})(\rd)$ to $\mathcal{W}(\cF L^{t_1},L^{t_2})(\rd)$, with
\begin{equation}\nonumber
\| T f \|_{\mathcal{W}(\cF L^{t_1},L^{t_2})}
\lesssim\|\sigma\|_{M_{v_{s_1,s_2}\otimes1}^{p,q}}\| f \|_{\mathcal{W}(\cF L^{r_1},L^{r_2})}.
\end{equation}
\end{corollary}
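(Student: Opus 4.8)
The plan is to deduce the Wiener amalgam space statement from Theorem \ref{main} by the duality/conjugation argument already laid out in Subsection ``Fourier integral operators and Wiener amalgam spaces''. Recall that continuity of $T_\sigma$ from $\cW(\cF L^{r_1},L^{r_2})$ to $\cW(\cF L^{t_1},L^{t_2})$ is equivalent, via $\widetilde{T} = \cF \circ T \circ \cF^{-1}$ and the identity $\cF \cM^{r_1,r_2} = \cW(\cF L^{r_1},L^{r_2})$, to boundedness of $\widetilde{T}$ from $\cM^{r_1,r_2}$ to $\cM^{t_1,t_2}$. By the explicit formula \eqref{Ttildeexpr} for the adjoint and duality, this in turn amounts to continuity of $\widetilde{T}^\ast_{\widetilde\sigma}$ from $\cM^{t_1',t_2'}$ to $\cM^{r_1',r_2'}$, where $\widetilde{T}^\ast$ is a FIO with phase $\widetilde\Phi(x,\eta) = -\Phi(-\eta,x)$ and symbol $\widetilde\sigma(x,\eta) = \overline{\sigma(-\eta,x)}$, and where $\widetilde\sigma \in M^{p,q}_{v_{s_2,s_1}\otimes 1}(\rdd)$ precisely when $\sigma \in M^{p,q}_{v_{s_1,s_2}\otimes 1}(\rdd)$ (this is \cite[Lemma 2.10]{fiomodulation1}).

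So first I would check that $\widetilde\Phi$ is again a tame phase function: conditions (i) and (ii) of Definition \ref{deffase} are immediate since differentiation of $-\Phi(-\eta,x)$ only permutes and negates the variables, and the non-degeneracy condition \eqref{detcond} for $\widetilde\Phi$ is equivalent to that for $\Phi$ because the mixed Hessian $\partial^2_{x_i\eta_l}\widetilde\Phi$ is, up to sign and transposition, the mixed Hessian of $\Phi$ evaluated at the swapped point. Hence Theorem \ref{main} applies to $\widetilde{T}^\ast_{\widetilde\sigma}$. Next I would apply Theorem \ref{main} with the roles of the exponents interchanged: the domain exponents become $(t_1',t_2')$, the target exponents become $(r_1',r_2')$, and the weight parameters become $(s_2,s_1)$. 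The index condition \eqref{indicitutti}, namely $1/r_i - 1/t_i \geq 1 - 1/p - 1/q$, is self-dual under $r_i \leftrightarrow t_i'$, $t_i \leftrightarrow r_i'$, since $1/t_i' - 1/r_i' = (1-1/t_i) - (1-1/r_i) = 1/r_i - 1/t_i$; so it transfers verbatim. The $q$-conditions and the weight conditions of cases $(i)$ and $(ii)$ of Theorem \ref{main} likewise swap into each other under this substitution — for instance $s_1 > d(1/r_2 - 1/r_1)$ with $r_2 \leq r_1$ becomes, after the swap $s_1 \to s_2$, $r_i \to t_i'$, the condition $s_2 > d(1/t_2' - 1/t_1') = d(1/t_1 - 1/t_2)$ with $t_1 \leq t_2$, which is one of the alternatives in $(ii)$ — so the hypotheses of Theorem \ref{main} on $(\Phi,\sigma)$ are exactly what is needed for the dual statement, and no new assumptions are required.

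The conclusion $\|\widetilde{T}^\ast_{\widetilde\sigma} f\|_{\cM^{r_1',r_2'}} \lesssim \|\widetilde\sigma\|_{M^{p,q}_{v_{s_2,s_1}\otimes 1}} \|f\|_{\cM^{t_1',t_2'}}$ then translates back, through the equivalence of norms $\|\widetilde\sigma\|_{M^{p,q}_{v_{s_2,s_1}\otimes1}} \asymp \|\sigma\|_{M^{p,q}_{v_{s_1,s_2}\otimes1}}$ and the chain of equivalences above, into the desired estimate $\|T_\sigma f\|_{\cW(\cF L^{t_1},L^{t_2})} \lesssim \|\sigma\|_{M^{p,q}_{v_{s_1,s_2}\otimes1}} \|f\|_{\cW(\cF L^{r_1},L^{r_2})}$. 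This is essentially the mechanism recorded in \cite[Corollary 5.2]{fiomodulation1}, so the proof can be kept short. The only point requiring genuine care — and the step I would expect to be the main obstacle in writing it out cleanly — is the bookkeeping of the index and weight conditions under the duality substitution, i.e.\ verifying that each of the four branches in $(i)$ and $(ii)$ of Theorem \ref{main} maps to a branch of the same system after $(r_i,t_i,s_1,s_2) \mapsto (t_i',r_i',s_2,s_1)$; once that symmetry is confirmed, nothing else is needed beyond citing Theorem \ref{main} and the duality lemma.
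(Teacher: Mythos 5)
Your proposal is correct and follows exactly the mechanism the paper intends for this corollary (the duality/conjugation argument of the subsection on Wiener amalgam spaces, which the paper leaves implicit here). Your bookkeeping of the branches is right: under $(r_i,t_i,s_1,s_2)\mapsto(t_i',r_i',s_2,s_1)$ the two alternatives of case $(i)$ of Theorem \ref{main} map onto the two alternatives of case $(ii)$ and vice versa, and tameness of the phase is preserved by $\Phi\mapsto-\Phi(-\eta,x)$, so no modified phase hypothesis is needed.
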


\subsection{Action from $\mathcal M^{r_1,r_2}(\R^d)$ to $\mathcal M^{r_2,r_1}(\R^d)$.}\label{3.2}

In this subsection we prove results for tame phase functions that satisfy \eqref{d2fix}.
This setup is particularly  useful to derive fixed-time estimates for a  family of  time-dependent FIOs $\{T_t\}_{t\in\R}$. These arise as  solutions to Cauchy problems for partial differential equations. For instance, consider the propagators $T_t= e^{it H}$,  where $H$ is the Weyl quantization of a quadratic form on the phase space $\rdd$.  We refer e.g. to \cite{cordero2} and \cite{fiomodulation1}.

As a byproduct, we obtain continuity results for FIOs acting between Wiener amalgam spaces (see Corollary \ref{3.15}).

\begin{theorem}\label{elefabioqppqnew}
Consider a tame phase function $\Phi$ that satisfies \eqref{d2fix},
and $1\leq p,q,
r_1,r_2\leq\infty$ such that
\begin{equation}\label{indici}
r_2\leq r_1,\quad q \leq \min(r_2,r_1'),\quad\frac1p+\frac1q\geq 1.
\end{equation}
If the  symbol $\sigma\in
M^{p,q}(\rdd)$, then the corresponding FIO $\ T$ extends to a
bounded operator $\mathcal{M}^{r_1,r_2}(\rd)\to
\mathcal{M}^{r_2,r_1}(\rd)$, with
\begin{equation}\label{normaoperatore9}
\|Tf\|_{\mathcal{M}^{r_2,r_1}} \lesssim
\|\sigma\|_{M^{p,q}}\|f\|_{\mathcal{M}^{r_1,r_2}}.
\end{equation}
\end{theorem}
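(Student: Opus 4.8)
\emph{Proof strategy.} The plan is to obtain the estimate by complex interpolation, starting from the $M^{\infty,1}$-case contained in Theorem~\ref{elefabioqppq}~(i) and interpolating it against Propositions~\ref{Minfty} and~\ref{M1}, with the interpolation parameters tuned so that the ``swapped'' target $\mathcal M^{r_2,r_1}(\rd)$ is reproduced exactly.

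I would first record two reductions. Since $r_2\le r_1$, the hypothesis $q\le\min(r_2,r_1')$ is equivalent to $q\le r_2\le r_1\le q'$, and in particular forces $q\le 2$. Moreover $1/p+1/q\ge1$ is the same as $p\le q'$, so $M^{p,q}(\rdd)\subseteq M^{q',q}(\rdd)$ with $\|\sigma\|_{M^{q',q}}\lesssim\|\sigma\|_{M^{p,q}}$ by \eqref{modspaceincl1}; hence it suffices to prove $\|Tf\|_{\mathcal M^{r_2,r_1}}\lesssim\|\sigma\|_{M^{q',q}}\|f\|_{\mathcal M^{r_1,r_2}}$ whenever $q\le r_2\le r_1\le q'$. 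The case $q=1$, i.e.\ $p=\infty$, is already Theorem~\ref{elefabioqppq}~(i), so one may assume $1<q\le2$ (the convention that for $p=\infty$ symbol spaces are to be replaced by closures of $\cS$, cf.\ the Remark after Theorem~\ref{fiointerpolation1}, being understood in what follows).

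Regarding $(\sigma,f)\mapsto T_\sigma f$ as a bilinear map, I would combine three continuity statements: $T\colon M^{\infty,1}(\rdd)\times\mathcal M^{a,b}(\rd)\to\mathcal M^{b,a}(\rd)$ for every $1\le b\le a\le\infty$ (Theorem~\ref{elefabioqppq}~(i), where \eqref{d2fix} enters), $T\colon M^{\infty}(\rdd)\times\mathcal M^{1}(\rd)\to\mathcal M^{\infty}(\rd)$ (Proposition~\ref{Minfty}), and $T\colon M^{1}(\rdd)\times\mathcal M^{\infty}(\rd)\to\mathcal M^{1}(\rd)$ (Proposition~\ref{M1}). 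By multilinear complex interpolation \cite[Theorem~4.4.1]{bergh-lofstrom} together with \eqref{interpolation}, interpolating the first two with parameter $\theta=q-1$ gives continuity of $T\colon M^{\infty,q_0}(\rdd)\times\mathcal M^{\rho_1,\rho_2}(\rd)\to\mathcal M^{\tau_1,\tau_2}(\rd)$, where $1/q_0=2-q$, $1/\rho_1=(2-q)/a+(q-1)$, $1/\rho_2=(2-q)/b+(q-1)$, $1/\tau_1=(2-q)/b$, $1/\tau_2=(2-q)/a$; here $1/\tau_1=1/\rho_2-(q-1)$ and $1/\tau_2=1/\rho_1-(q-1)$, and as $(a,b)$ runs over $1\le b\le a\le\infty$ the pair $(\rho_1,\rho_2)$ runs over all of $1\le\rho_2\le\rho_1\le q_0'=1/(q-1)$. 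Interpolating this with Proposition~\ref{M1}, now with $\mu=1/q'$, the symbol space becomes $M^{q',q}(\rdd)$, the domain becomes $\mathcal M^{r_1,r_2}(\rd)$ with $1/r_i=(1/q)(1/\rho_i)$, and the target becomes $\mathcal M^{t_1,t_2}(\rd)$ with $1/t_1=(1/q)(1/\tau_1)+1/q'=(1/q)(1/\rho_2)=1/r_2$ and, symmetrically, $1/t_2=1/r_1$, i.e.\ exactly $\mathcal M^{r_2,r_1}(\rd)$. As $(\rho_1,\rho_2)$ runs over $1\le\rho_2\le\rho_1\le1/(q-1)$ the pair $(r_1,r_2)$ runs over exactly $q\le r_2\le r_1\le q'$, so the desired estimate holds on the whole admissible range.

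The only genuine difficulty is the rigidity of $\mathcal M^{r_1,r_2}\to\mathcal M^{r_2,r_1}$: in contrast to Theorems~\ref{fiointerpolation1} and~\ref{main}, one cannot afterwards enlarge the target and shrink the domain independently, so the two interpolation parameters must be chosen so that the index shifts produced at the two stages cancel; this cancellation forces $\theta=q-1$ and $\mu=1/q'$, and these lie in $[0,1]$ precisely because $q\le2$, which in turn we saw follows from $q\le\min(r_2,r_1')$. The remaining verifications --- the three endpoint estimates (the first resting on the Gabor-matrix formula \eqref{symbstft} underlying Theorem~\ref{elefabioqppq}) and the exponent bookkeeping --- are routine, and the companion statement for Wiener amalgam spaces (Corollary~\ref{3.15}) follows by the duality computation recorded in Section~\ref{preliminaries}.
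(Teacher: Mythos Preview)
Your argument is correct, but the paper takes a shorter route. Instead of the two-stage interpolation via Propositions~\ref{Minfty} and~\ref{M1}, the paper first observes the elementary $L^2$ bound
\[
\|T_\sigma f\|_{L^2}\le\|\sigma\|_{L^2(\rdd)}\|f\|_{L^2}
\]
(immediate from Cauchy--Schwarz and Plancherel, since $|\la T_\sigma f,g\ra|=|\la e^{2\pi i\Phi}\sigma,\bar{\hat f}\otimes g\ra|$), and then performs a \emph{single} bilinear interpolation between Theorem~\ref{elefabioqppq}~(i) and this $L^2$ estimate. With parameter $\theta\in[0,1]$ this yields directly $T\colon M^{p,q}\times\mathcal M^{r_1,r_2}\to\mathcal M^{r_2,r_1}$ with $1/p+1/q=1$, $p\ge2$, and $q\le r_2\le r_1\le q'$; the swap in the target comes for free because both endpoints already have it (at $\theta=1$ the swap is trivial). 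The extension to $1/p+1/q\ge1$ then follows from \eqref{modspaceincl1}, as in your reduction.

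Your approach has the virtue of reusing only the building blocks already assembled for Theorems~\ref{fiointerpolation1} and~\ref{main}, so no new endpoint estimate needs to be proved; the price is the more delicate bookkeeping needed to arrange that the offsets produced by the two interpolation steps cancel and restore the exact swap $\mathcal M^{r_1,r_2}\to\mathcal M^{r_2,r_1}$. The paper's proof avoids this by noticing that the $L^2$ midpoint (where $r_1=r_2=2$ and the swap is vacuous) is available by an independent and essentially trivial argument. One minor point: in your scheme $\theta=q-1$ hits the endpoint $\theta=1$ when $q=2$; you can sidestep this by treating $q=2$ (which forces $r_1=r_2=2$) separately via a single interpolation between Propositions~\ref{Minfty} and~\ref{M1} at parameter $1/2$, which indeed reproduces the paper's $L^2$ estimate.
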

\begin{proof}
First we observe that if  $\sigma \in M^2(\R^{2d})=L^2(\R^{2d})$ then $T=T_\sigma$ is bounded on $L^2(\rd)$ with
\begin{equation}\label{caseL2}
\| T_\sigma f \|_{L^2(\R^d)} \leq \| \sigma \|_{L^2(\R^{2d})} \| f \|_{L^2(\R^d)},\quad \forall\,f\in\lrd.
\end{equation}
Indeed, using the Cauchy--Schwarz inequality and the Plancherel theorem, for every $f,g\in\lrd$,
\begin{align*}
|\la T_\sigma f,g\ra|&=\left|\la e^{2\pi i\Phi} \sigma,{\bar {\hat{f}}}\otimes g\ra\right|
\leq \| \sigma\|_{L^2(\rdd)}\|f\|_{L^2(\rd)}\|g\|_{L^2(\rd)}
\end{align*}
and \eqref{caseL2} follows.
Next, multilinear complex interpolation between Theorem \ref{elefabioqppq} $(i)$ and  \eqref{caseL2}
yields the estimate \eqref{normaoperatore9}, for $r_2\leq r_1$, $q \leq \min(r_2,r_1')$, $\,p\geq 2$ and $1/p+1/q=1$.
Finally, the inclusion relations for modulation spaces \eqref{modspaceincl1} extend the result to $1\leq p\leq\infty$ and  $1/p+1/q\geq 1$ (note that $q\leq 2$).
\end{proof}
\begin{corollary}\label{3.15}
Under the assumptions of Theorem \ref{elefabioqppqnew},
with \eqref{d2fix} replaced by \eqref{detcond01}, the operator
$T$ extends to a bounded operator from $\mathcal{W}(\cF L^{r_1},L^{r_2})(\rd)$ to $\mathcal{W}(\cF L^{r_2},L^{r_1})(\rd)$, with
\begin{equation}\nonumber
\| T f \|_{\mathcal{W}(\cF L^{r_2},L^{r_1})}
\lesssim\| \sigma \|_{M^{p,q}} \| f \|_{\mathcal{W}(\cF L^{r_1},L^{r_2})}.
\end{equation}
\end{corollary}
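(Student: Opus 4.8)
The plan is to transfer the boundedness result of Theorem~\ref{elefabioqppqnew} from modulation spaces to Wiener amalgam spaces via the Fourier-transform duality worked out in the subsection "Fourier integral operators and Wiener amalgam spaces." Concretely, I would use the identity $\cF \mathcal{M}^{a,b} = \mathcal{W}(\cF L^a, L^b)$, so that boundedness of $T$ from $\mathcal{W}(\cF L^{r_1},L^{r_2})$ to $\mathcal{W}(\cF L^{r_2},L^{r_1})$ is equivalent to boundedness of $\tilde T = \cF \circ T \circ \cF^{-1}$ from $\mathcal{M}^{r_1,r_2}$ to $\mathcal{M}^{r_2,r_1}$, with the same operator norm up to constants.

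The key computation is to identify $\tilde T$ as a Fourier integral operator and, via duality, reduce its boundedness to that of its adjoint $\tilde T^\ast$, whose expression is recorded in \eqref{Ttildeexpr}:
\begin{equation*}
\tilde{T}^\ast f(x)=\int e^{-2\pi i\Phi(-\eta,x)}\overline{\sigma(-\eta,x)}\hat{f}(\eta)\, d\eta.
\end{equation*}
This is again a FIO, with phase $\Phi^\ast(x,\eta) = -\Phi(-\eta,x)$ and symbol $\sigma^\ast(x,\eta) = \overline{\sigma(-\eta,x)}$; moreover $\tilde T^\ast$ should act boundedly from $\mathcal{M}^{r_2',r_1'}$ to $\mathcal{M}^{r_1',r_2'}$. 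I would then invoke the symbol symmetry $\sigma \in M^{p,q}(\rdd) \Leftrightarrow \sigma^\ast \in M^{p,q}(\rdd)$ (cf. \cite[Lemma 2.10]{fiomodulation1}, here with trivial weight), and check that the second-order derivatives of $\Phi^\ast$ are controlled: indeed $\partial^2_{x_i x_l}\Phi^\ast(x,\eta) = -\partial^2_{\eta_i\eta_l}\Phi(-\eta,x)$, so the hypothesis \eqref{detcond01} on $\Phi$ is exactly what guarantees that $\Phi^\ast$ satisfies \eqref{d2fix}. One also needs that $\Phi^\ast$ is tame, which is immediate from tameness of $\Phi$ since the linear change of variables $(x,\eta)\mapsto(-\eta,x)$ and the overall sign preserve all the conditions (i)--(iii) of Definition~\ref{deffase}. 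Finally, the index conditions are symmetric under $r_i \mapsto r_i'$: $r_2\leq r_1$ becomes $r_1'\leq r_2'$, $q\leq\min(r_2,r_1')$ becomes $q\leq\min(r_1',r_2)=\min((r_2')',(r_1')')$, and $1/p+1/q\geq 1$ is unchanged, so the hypotheses of Theorem~\ref{elefabioqppqnew} hold for $\tilde T^\ast$ with the swapped exponents.

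Putting these pieces together: Theorem~\ref{elefabioqppqnew} applied to $\Phi^\ast$, $\sigma^\ast$ gives boundedness of $\tilde T^\ast$ from $\mathcal{M}^{r_2',r_1'}$ to $\mathcal{M}^{r_1',r_2'}$ with $\|\tilde T^\ast f\|_{\mathcal{M}^{r_1',r_2'}} \lesssim \|\sigma^\ast\|_{M^{p,q}}\|f\|_{\mathcal{M}^{r_2',r_1'}} \asymp \|\sigma\|_{M^{p,q}}\|f\|_{\mathcal{M}^{r_2',r_1'}}$; by duality this is exactly boundedness of $\tilde T$ from $\mathcal{M}^{r_1,r_2}$ to $\mathcal{M}^{r_2,r_1}$, hence of $T$ from $\mathcal{W}(\cF L^{r_1},L^{r_2})$ to $\mathcal{W}(\cF L^{r_2},L^{r_1})$ with the stated estimate. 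The main obstacle is bookkeeping rather than conceptual: one must track carefully how the phase transformation $\Phi \mapsto \Phi^\ast$ interacts with the structural conditions—in particular verifying that \eqref{detcond01} is the correct replacement for \eqref{d2fix} (the Hessian block in the $\eta$ variables transforming into the Hessian block in the $x$ variables)—and confirm that the index inequalities in \eqref{indici} are genuinely invariant under passing to conjugate exponents. Since the subsection on Wiener amalgam spaces already spells out all the duality machinery, no new analytic input is needed beyond these verifications.
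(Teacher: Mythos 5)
Your proposal is correct and is essentially the paper's own argument: the corollary is stated without a separate proof precisely because the subsection on Wiener amalgam spaces sets up the reduction to the adjoint operator \eqref{Ttildeexpr}, and your verifications (that the phase $\Phi^\ast(x,\eta)=-\Phi(-\eta,x)$ is tame and satisfies \eqref{d2fix} exactly when $\Phi$ satisfies \eqref{detcond01}, that the symbol transformation preserves $M^{p,q}$, and that the conditions \eqref{indici} are invariant under passing to conjugate exponents) are the intended bookkeeping. No gap.
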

The sharpness of the preceeding results can be derived as a special case of the following.

\begin{proposition}\label{necessary0}
Let $1 \leq p,q,r_1,r_2,t_1,t_2 \leq \infty$. Consider the phase function $\Phi\phas=|x|^2/2+x \cdot \eta$ which is tame and satisfies \eqref{d2fix}. Suppose the following estimates for the corresponding FIO $T$:
\begin{equation}\label{normaoperatore8}
\|Tf\|_{\mathcal{M}^{t_1,t_2}} \lesssim \|\sigma\|_{\mathcal M^{p,q}} \|f\|_{\mathcal{M}^{r_1,r_2}},
\quad \forall \sigma \in \mathcal S (\R^{2d}), \quad \forall f \in \mathcal S (\R^d).
\end{equation}
Then
\begin{equation}\label{necessary1}
 \frac1{r_1}-\frac1{t_2} \geq 1-\frac1p-\frac1q, \quad \frac1{r_2}-\frac1{t_2} \geq 1-\frac1p-\frac1q
 \end{equation}
and
\begin{equation}\label{necessary2}
 q \leq \min(t_1,t_2,r_1',r_2').
 \end{equation}
\end{proposition}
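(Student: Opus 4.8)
The plan is to prove the necessity of \eqref{necessary1} and \eqref{necessary2} by testing the estimate \eqref{normaoperatore8} against carefully chosen families of symbols $\sigma$ and functions $f$, and extracting the exponent inequalities by letting a dilation parameter $\lambda$ tend to $0$ or $\infty$. For this phase function $\Phi\phas = |x|^2/2 + x\cdot\eta$ the operator is explicit: $Tf(x) = e^{i\pi|x|^2}\, (\cF^{-1}(\sigma(x,\cdot)\hat f))(x)$, so when $\sigma(x,\eta) = \sigma_1(x)\sigma_2(\eta)$ is a tensor product, $Tf(x) = e^{i\pi|x|^2}\sigma_1(x)\, (\cF^{-1}\sigma_2 \ast \cdots)$. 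The cleanest choices are Gaussians: take $\sigma$ to be a (dilated, possibly modulated) Gaussian on $\rdd$ and $f$ a (dilated) Gaussian on $\rd$, so that $Tf$ is again a complex Gaussian whose modulation-space norms are computed via Lemma \ref{lemm2} and \eqref{gaussdilmod}. Since the pointwise factor $e^{i\pi|x|^2}$ only changes the imaginary part $b$ in $h_{a+ib}$, and Lemma \ref{lemm2} controls $\|h_{a+ib}\|_{M^{p,q}}$ for all $a>0, b\in\R$, all the relevant norms are available in closed form.

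First I would establish \eqref{necessary2}, i.e. $q\leq\min(t_1,t_2,r_1',r_2')$. The standard strategy (as in \cite[Proposition 5.3]{corderonicola4}) is: to get $q\leq t_1$ and $q\leq t_2$, fix $f$ a fixed Schwartz Gaussian and let $\sigma$ range over $M_\xi(\text{Gaussian})$ translated/modulated so that $\|\sigma\|_{\mathcal M^{p,q}}$ stays bounded while $\|Tf\|_{\mathcal M^{t_1,t_2}}$ blows up unless $q\leq t_i$; concretely one sums/superposes time-frequency shifts of a fixed bump, exploiting that the $\ell^q$-type summation in the symbol norm must dominate the $\ell^{t_i}$-type summation in the output norm. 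To get $q\leq r_1'$ and $q\leq r_2'$ one argues dually: fix the symbol and vary $f$ over a family of time-frequency shifts, using that the map $f\mapsto Tf$ pairs the $r_i'$-summability of the input against the $q$-summability forced by the symbol. Because $T$ here is just a pointwise multiplication composed with a Fourier multiplier, $Tg_{x,\eta}$ is again (up to phase) a time-frequency shift of a fixed Gaussian, which makes these superposition estimates transparent.

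Next I would establish \eqref{necessary1}. The first inequality $1/r_1 - 1/t_2 \geq 1 - 1/p - 1/q$ comes from plugging in a dilated Gaussian symbol $\sigma = h^{(2d)}_{\lambda}$ and a dilated Gaussian $f = \varphi_\mu$, computing $\|\sigma\|_{\mathcal M^{p,q}}$ via \eqref{gaussdilmod} (in dimension $2d$), $\|f\|_{\mathcal M^{r_1,r_2}}$ via \eqref{dilgauss}, and $\|Tf\|_{\mathcal M^{t_1,t_2}}$ via Lemma \ref{lemm2}, then choosing the scaling relation between $\lambda$ and $\mu$ and the limit ($\lambda,\mu\to 0$ or $\to\infty$) that makes each factor behave like a pure power, and matching exponents. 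The pointwise factor $e^{i\pi|x|^2}$ must be carried through: it contributes a $b$-term $\asymp 1$ in $h_{a+ib}$ after the multiplier step, which is precisely why $t_2$ (the high-frequency index) rather than $t_1$ appears on the left-hand side — the quadratic chirp spreads the output in frequency. The second inequality $1/r_2 - 1/t_2 \geq 1-1/p-1/q$ follows from the same computation with the roles of the two scaling regimes interchanged, or equivalently by also modulating $f$.

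The main obstacle I anticipate is bookkeeping the asymptotics of $\|h_{a+ib}\|_{M^{p,q}}$ in \eqref{lem2} through the multiplier step: after applying $T$ to $\varphi_\mu$ one gets (as in the proof of the Proposition just above) $T\varphi_\mu = c\, e^{-\pi\frac{\mu^2}{1-i\mu^2}|\cdot|^2}$ composed with the chirp, so the effective parameters $a = \mathrm{Re}\frac{\mu^2}{1-i\mu^2}$ and $b = \mathrm{Im}\frac{\mu^2}{1-i\mu^2} \pm 1$ have genuinely different behavior as $\mu\to 0$ versus $\mu\to\infty$, and one must track which of $(a+1)^2+b^2$ and $a(a+1)+b^2$ dominates in each regime and in each of the four quadrants of $(1/p,1/q)$ relative to $(1/2,1/2)$. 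A clean way to organize this is to reduce, via the identity between continuity of $T$ and continuity of the Schr\"odinger multiplier \eqref{schrodmult} between Wiener amalgam spaces noted in the excerpt, to a single scaling computation and then read off all of \eqref{necessary1}–\eqref{necessary2} by letting the dilation parameter go to $0$ and to $\infty$; but either way the exponent arithmetic is the crux, and I would present it as a short case analysis rather than a single formula.
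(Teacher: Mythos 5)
Your overall strategy (Gaussian and chirp test families, scaling limits as $\lambda\to0$ and $\lambda\to\infty$) has the right flavor and overlaps with the paper's, but there is a concrete gap in your treatment of \eqref{necessary1}. You propose the symbol $\sigma=h^{(2d)}_{\lambda}$, an \emph{isotropically} dilated Gaussian on $\rdd$. Its modulation norm factorizes as $\|\f_\lambda\|_{M^{p,q}(\rd)}^2$, which by \eqref{gaussdilmod} behaves like $\lambda^{-2d/p}$ as $\lambda\to0$ and like $\lambda^{-2d/q'}$ as $\lambda\to\infty$; no coupling between $\lambda$ and the input dilation $\mu$ can then produce the power $\lambda^{\pm d(1-1/p-1/q)}$ that must appear on the symbol side, so the inequalities you would extract (of the type $1/r_1-1/t_2\geq-2/p$) are strictly weaker than \eqref{necessary1}. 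The paper's essential trick is to dilate the two variables of the symbol \emph{reciprocally}: $\sigma_\lambda=\f_{\lambda/\sqrt2}\otimes\f_{1/\lambda}$, whose norm is $\asymp\lambda^{-d/p}\cdot\lambda^{d/q'}=\lambda^{d(1-1/p-1/q)}$ as $\lambda\to0$ (and the reciprocal power as $\lambda\to\infty$), and which moreover makes $T_\lambda\f_\lambda=2^{-d/2}e^{-\pi(\lambda^2-i)|\cdot|^2}$ exactly a complex Gaussian covered by Lemma \ref{lemm2} with $a=\lambda^2$, $b=-1$; a single parameter then yields both inequalities of \eqref{necessary1} from the two limits. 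Without this anisotropic choice your computation cannot close.

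For \eqref{necessary2} your sketch is too thin to assess: the superposition of time-frequency shifts is never specified, and the chirp $e^{i\pi|x|^2}$ shears time-frequency shifts, which must be accounted for. The paper instead tests with compactly supported chirps $h_\lambda=h\,e^{-\pi i\lambda|\cdot|^2}$, whose $M^{p,q}$-norm is $\asymp\lambda^{d(1/q-1/2)}$ by Lemma \ref{lloc}, arranged so that $Tf_\lambda$ is independent of $\lambda$; this gives $q\leq r_1'$ directly. Crucially, the remaining bounds $q\leq t_1$, $q\leq t_2$ and $q\leq r_2'$ are \emph{not} obtained by "the same argument with roles interchanged": they require a cutoff $\chi_n$ together with a dominated-convergence step to localize a non-compactly-supported output, and, for $q\leq t_2$ and $q\leq r_2'$, passage to the adjoint operator and the Wiener amalgam reformulation \eqref{amalgam0}. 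These are genuine missing ingredients rather than bookkeeping, so as written the proposal does not yet constitute a proof.
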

\begin{proof}
For $\lambda>0$, consider the family of FIOs $T_\lambda$, having
phase function $\Phi$ and symbols $\sigma_\lambda=\f_{\lambda/\sqrt {2}} \otimes \f_{1/\lambda}$, with $\f(x)=e^{-\pi |x|^2}$ and $\f_{\lambda}(x) = \f(\lambda x)$.
By assumption we have
\begin{equation}\label{assumption3}
\|T_\lambda \f_\lambda\|_{\mathcal{M}^{t_1,t_2}}\lesssim \|\sigma_\lambda\|_{\mathcal M^{p,q}}\|\f_\lambda\|_{\mathcal{M}^{r_1,r_2}}.
\end{equation}
A straightforward computation shows that $T_\lambda\f_\lambda(x)=2^{-d/2}e^{-\pi(\lambda^2-i)|x|^2}$, so that, using\eqref{lem2} with $a=\lambda^2$ and $b=-1$, we obtain
\begin{align*}
\|T_\lambda\f_\lambda\|_{\mathcal M^{t_1,t_2}(\rd)}&\asymp \frac{((\lambda^2+1)^2+1)^{\frac d2\left(\frac 1 {t_1}-\frac12\right)}}{\lambda^{\frac d {t_2}}(\lambda^2(\lambda^2+1)+1)^{\frac d2\left(\frac 1 {t_1}-\frac 1 {t_2}\right)}}\\
&\asymp\left\{\begin{array}{ll}
\lambda^{-\frac {d}{t_2}}, & \lambda\to 0\\
\lambda^{-d\left(1-\frac 1 {t_2}\right)}, & \lambda\to +\infty.
\end{array}
\right.
\end{align*}
From \eqref{gaussdilmod} we obtain
\begin{align}
\|\sigma_\lambda\|_{\mathcal M^{p,q}(\rdd)}&\asymp \|\f_{\lambda/\sqrt {2}}\|_{\mathcal M^{p,q}(\rd)}\|\f_{1/\lambda}\|_{\mathcal M^{p,q}(\rd)} \nonumber \\
& \asymp\left\{\begin{array}{ll}
\lambda^{-d/p}\lambda^{d/q'}=\lambda^{d(1-1/p-1/q)}, & \lambda\to 0 \\
\lambda^{-d/q'}\lambda^{d/p}=\lambda^{-d(1-1/p-1/q)}, & \lambda\to +\infty,
\end{array}
\right. \label{symbolbehaviour2}
\end{align}
whereas $\| \f_\lambda \|_{\mathcal M^{r_1,r_2}}$ depends on $\lambda$ according to \eqref{dilgauss}.
Combining  this with \eqref{assumption3} we obtain
 for $\lambda \to 0$  the inequality
\begin{equation}\nonumber
\frac 1{r_1}-\frac 1 {t_2}\geq 1-\frac 1p-\frac 1q,
\end{equation}
whereas letting $\lambda\to+\infty$ gives
\begin{equation}\nonumber
\frac 1{r_2}-\frac 1 {t_2}\geq 1-\frac 1p-\frac 1q.
\end{equation}
This proves \eqref{necessary1}.

In order to prove \eqref{necessary2}, define $h_\lambda(x) = h(x) e^{- \pi i
\lambda |x|^2}$ for $\lambda \geq 1$ where $h \in
C_c^\infty(\R^d) \setminus \{0\}$, $h \geq 0$, $h$ even, and the parameter-dependent symbol
$\sigma_\lambda=h \otimes h_\lambda$.
Since $\sigma_\lambda $
has support in a compact set independent of $\lambda\geq 1$,
Lemma \ref{lloc} (i) and \cite[Lemma~4.2]{corderonicola4}
give
\begin{equation}\label{symbolbehaviour0}
\| \sigma_\lambda \|_{\mathcal M^{p,q}} \asymp \lambda^{d \left( \frac1{q}
- \frac1{2}\right)}, \quad \lambda \geq 1.
\end{equation}
If we set $f_\lambda=\mathcal F^{-1} (\overline h_\lambda)$, then the operator
with phase function $\Phi$ and symbol $\sigma_\lambda$ acting on $f_\lambda$
is
$$
T f_\lambda (x) = e^{\pi i |x|^2} h(x) \, \mathcal F^{-1} h^2 (x).
$$
Hence $T f_\lambda$ does not depend on $\lambda$, and we may choose $h$ such that
$1 \lesssim \| T f_\lambda \|_{\mathcal M^{t_1,t_2}}$ for $\lambda \geq 1$.
By Lemma \ref{lloc} (ii) and \cite[Lemma~4.2]{corderonicola4} we
have
\begin{equation}\label{functionbehaviour0}
\| f_\lambda \|_{\mathcal M^{r_1,r_2}} \asymp \lambda^{d \left( \frac1{r_1}
- \frac1{2}\right)}.
\end{equation}
Combining \eqref{symbolbehaviour0}, \eqref{functionbehaviour0} with the assumption \eqref{normaoperatore8} and the observation above we obtain
$$
1 \lesssim \lambda^{d \left( \frac1{q}
+\frac1{r_1}- 1\right)}, \quad \lambda \geq 1,
$$
which gives $q \leq r_1'$.

Next we define the symbol $\sigma_\lambda=\chi_n e^{- \pi i |\cdot|^2} \otimes h_\lambda$,
where $\chi_n(x)=\chi(x/n)$, $n>0$ is an integer, $\chi \in \mathcal F C_c^\infty(\R^d)$, $\chi$ real-valued
and $\chi(0)=1$.
If $f=\mathcal F^{-1} h$ we obtain
$$
T f (x) = \chi_n(x) \, \mathcal F^{-1} (e^{-\pi i \lambda |\cdot|^2} h^2) (x).
$$
Again \cite[Lemma~4.2]{corderonicola4} gives
\begin{equation}\label{functionbehaviour2}
\| \mathcal F^{-1} (e^{-\pi i \lambda |\cdot|^2} h^2) \|_{L^{t_1}} \asymp
\lambda^{d \left( \frac1{t_1}
-\frac1{2}\right)}, \quad \lambda \geq 1.
\end{equation}
By means of dominated convergence we know that
$$
\| (1-\chi_n) \, \mathcal F^{-1} (e^{-\pi i \lambda |\cdot|^2} h^2) \|_{L^{t_1}} \leq
\frac1{2} \, \| \mathcal F^{-1} (e^{-\pi i \lambda |\cdot|^2} h^2) \|_{L^{t_1}}
$$
for $n \geq N$ where $N$ is sufficiently large. Let $n \geq N$ be fixed.
We have now
\begin{equation}\nonumber
\begin{aligned}
\lambda^{d \left( \frac1{t_1} -\frac1{2}\right)}
& \asymp  \| \mathcal F^{-1} (e^{-\pi i \lambda |\cdot|^2} h^2) \|_{L^{t_1}} \\
& \leq 2 \left(  \| \mathcal F^{-1} (e^{-\pi i \lambda |\cdot|^2} h^2) \|_{L^{t_1}}
- \| (1-\chi_n) \, \mathcal F^{-1} (e^{-\pi i \lambda |\cdot|^2} h^2) \|_{L^{t_1}} \right) \\
& \leq 2 \, \| \chi_n \, \mathcal F^{-1} (e^{-\pi i \lambda |\cdot|^2} h^2) \|_{L^{t_1}}
= 2 \, \| T f \|_{L^{t_1}}, \quad \lambda \geq 1.
\end{aligned}
\end{equation}
Because $\mathcal F \chi_n$ is supported in a fixed compact set for all $n$, $\mathcal F(T f)$ is supported in a fixed compact set for all $n$ and for all $\lambda \geq 1$.
Thus Lemma \ref{lloc} (ii) gives $\| T f \|_{L^{t_1}} \asymp \| T f \|_{\mathcal M^{t_1,t_2}}$.
Since $\|f\|_{\mathcal{M}^{r_1,r_2}} \lesssim 1$, and \eqref{symbolbehaviour0} holds because $n$ is fixed,
 when combined with the assumption \eqref{normaoperatore8}, this gives
$$
\lambda^{d \left( \frac1{t_1} -\frac1{2} \right)}
\lesssim \lambda^{d \left( \frac1{q} -\frac1{2} \right)}, \quad \lambda \geq 1.
$$
This implies $q \leq t_1$.

Next we note that \eqref{normaoperatore8} and \eqref{duality1} gives
\begin{equation}\label{amalgam0}
\| \tilde T^* f\|_{W(\mathcal F L^{r_1'}, L^{r_2'})} \lesssim \|\sigma\|_{\mathcal M^{p,q}}\|f\|_{W(\mathcal F L^{t_1'}, L^{t_2'})}
\end{equation}
where $\tilde T^*$ is specified by \eqref{Ttildeexpr}.
Let  $\sigma_\lambda=h_\lambda \otimes h$, $f_\lambda=\mathcal F^{-1} (h_\lambda)$.
Then
\begin{equation}\nonumber
\tilde T^* f_\lambda (x) = h(x) \, \mathcal F^{-1} (e^{-\pi i |\cdot|^2} h^2) (x)
\end{equation}
which implies $1 \lesssim \| \tilde T^* f_\lambda \|_{W(\mathcal F L^{r_1'}, L^{r_2'})}$ for all $\lambda \geq 1$.

As before we obtain
\begin{equation}\nonumber
\| f_\lambda \|_{\cW(\mathcal F L^{t_1'}, L^{t_2'})} =  \| h_\lambda \|_{\mathcal M^{t_1',t_2'} }
\asymp \lambda^{d \left( \frac1{t_2'} - \frac1{2}\right)}, \quad \lambda \geq 1.
\end{equation}
Combination with \eqref{amalgam0} and \eqref{symbolbehaviour0} now gives $q \leq t_2$.

Finally, in order to prove \eqref{necessary2}, it remains to verify $q \leq r_2'$.
Let $\sigma_\lambda=h_{-\lambda} \otimes \chi_n$, $f=\mathcal F^{-1} h$.
Then
\begin{equation}\nonumber
\tilde T^* f (x) = \chi_n (x) \, \mathcal F^{-1} (e^{-\pi i (1+\lambda) |\cdot|^2}  h^2) (x),
\end{equation}
The same argument as above and Lemma \ref{lloc} (i) give, for $n$ sufficiently large (and fixed) and $\lambda \geq 1$,
\begin{equation}\nonumber
\begin{aligned}
\lambda^{d \left( \frac1{r_2'} -\frac1{2}\right)} &
\asymp \| \mathcal F^{-1} ( e^{-\pi i (1+\lambda) |\cdot|^2} h^2) \|_{L^{r_2'}} \\
& \lesssim \| \chi_n \, \mathcal F^{-1} ( e^{-\pi i (1+\lambda) |\cdot|^2} h^2) \|_{L^{r_2'}} \\
& = \| \widehat{\chi_n} * e^{-\pi i (1+\lambda) |\cdot|^2} h^2 \|_{\mathcal F L^{r_2'}} \\
& \asymp \| \widehat{\chi_n} * e^{-\pi i (1+\lambda) |\cdot|^2} h^2 \|_{\mathcal M^{r_1',r_2'}} \\
& = \| \tilde T^* f\|_{W(\mathcal F L^{r_1'}, L^{r_2'})}.
\end{aligned}
\end{equation}
The estimate \eqref{symbolbehaviour0} holds since $n$ is fixed and by Lemma \ref{lloc} (i) and \cite[Lemma~4.2]{corderonicola4}
\begin{equation}\nonumber
\| h_{-\lambda} \|_{\mathcal M^{p,q}}
\asymp \| \cF h_{-\lambda} \|_{L^q}
= \| \cF (\overline{h_{\lambda}}) \|_{L^q}
= \| \cF h_{\lambda} \|_{L^q}
\asymp \lambda^{d \left( \frac1{q}
- \frac1{2}\right)}, \quad \lambda \geq 1.
\end{equation}
These considerations combined with \eqref{amalgam0} finally prove $q \leq r_2'$.
\end{proof}
We apply the previous result to discuss the sharpness of Theorem \ref{elefabioqppqnew}.
Indeed, if we choose $t_1=r_2$ and $t_2=r_1$, then \eqref{necessary1} becomes
$$\frac1p+\frac1q\geq 1, \quad \frac1{r_2}-\frac1{r_1} \geq 1- \frac1p-\frac1q $$
and \eqref{necessary2} is $q\leq \min(r_1,r_2,r_1',r_2')$. If we assume $r_2 \leq r_1$,  this can be rephrased as $q\leq \min(r_2,r_1')$, and thus the conditions \eqref{indici} are necessary
under the assumption $r_2 \leq r_1$.

%%%%%%%%%%%%%%%%%%%%%%%%%%%%%%%%%%%%%%%%%%%%%%%%%%%%%%%%%
\section{Consequences for pseudodifferential operators}\label{sharpness}
%%%%%%%%%%%%%%%%%%%%%%%%%%%%%%%%%%%%%%%%%%%%%%%%%%%%%%%%%%%%%%%%%%%%%%%%

If we choose the  phase function $\Phi(x,\o) = x \cdot
\o$, the FIO reduces to a pseudodifferential operator in
the Kohn--Nirenberg form. Boundedness results for
pseudodifferential operators acting between modulation spaces are
contained in many recent papers, see e.g.
\cite{boulkhemair,CG02,corderonicola4,GH99,GH04,Toft,Toftweight}.
In particular, the action of a pseudodifferential operator
between different modulation spaces was studied by Toft, and his result
can be rephrased in our context as follows
\cite[Theorem~4.3]{Toft}.

\begin{theorem}\label{psidomodtoft} Assume that $1 \leq p,q,r_1,t_1,r_2,t_2 \leq
\infty$ satisfy
\begin{equation}\label{settoft}
1/r_1 - 1/t_1 = 1/r_2 - 1/t_2 = 1 - 1/p - 1/q, \quad q \leq t_1, t_2
\leq p.
\end{equation}
 Then the pseudodifferential operator $T$, from $\cS(\rd)$ to $\cS'(\rd)$,
 having symbol  $\sigma \in M^{p,q}(\R^{2d})$, extends uniquely
to a bounded operator from $\mathcal{M}^{r_1,r_2}(\R^d)$ to
$\mathcal{M}^{t_1,t_2}(\R^d)$, with the estimate
\begin{equation}\label{stimaToft}
\|Tf\|_{\mathcal{M}^{t_1,t_2}} \lesssim
\|\sigma\|_{M^{p,q}}\|f\|_{\mathcal{M}^{r_1,r_2}}.
\end{equation}
\end{theorem}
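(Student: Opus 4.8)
The plan is to deduce Theorem~\ref{psidomodtoft} as a special case of a boundedness result already at our disposal, namely Theorem~\ref{boundedphasegradient} (equivalently, the sufficiency half of Theorem~\ref{Charpseudo}). First I would record that the Kohn--Nirenberg phase $\Phi(x,\o)=x\cdot\o$ is a tame phase function satisfying \eqref{phasegrad}: one has $\partial^\a\Phi=0$ for every $\a\in\bN^{2d}$ with $|\a|\geq 3$, while for $|\a|=2$ the derivative $\partial^\a\Phi$ is one of the entries of the mixed Hessian $\partial^2\Phi/\partial x_i\partial\o_l=\delta_{il}$, hence bounded; the same identity gives $\det(\partial^2\Phi/\partial x_i\partial\o_l)\equiv 1$, so the non-degeneracy condition \eqref{detcond} holds with $\delta=1$; and since $\nabla_x\Phi(x,\o)=\o$ does not depend on $x$, the supremum in \eqref{phasegrad} is $0$.

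Next I would verify that Toft's index hypotheses \eqref{settoft} imply the conditions \eqref{indicitutti} and \eqref{indiceq} required by Theorem~\ref{boundedphasegradient}. Condition \eqref{indicitutti} is immediate, since \eqref{settoft} asserts the corresponding equalities, which are in particular the required inequalities. For \eqref{indiceq}, the bounds $q\leq t_1$ and $q\leq t_2$ are part of \eqref{settoft}, so it remains only to produce $q\leq r_i'$ for $i=1,2$. Here I would combine $t_i\leq p$, i.e.\ $1/t_i\geq 1/p$, with the identity $1/r_i=1/t_i+1-1/p-1/q$ from \eqref{settoft} to get
\[
\frac1{r_i}=\frac1{t_i}+1-\frac1p-\frac1q\geq \frac1p+1-\frac1p-\frac1q=1-\frac1q,
\]
that is, $1/r_i+1/q\geq 1$, which is exactly $q\leq r_i'$.

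With the hypotheses of Theorem~\ref{boundedphasegradient} now in force for the phase $\Phi(x,\o)=x\cdot\o$ and the symbol $\sigma\in M^{p,q}(\rdd)$, I would conclude that $T=T_\sigma$ extends to a bounded operator $\mathcal M^{r_1,r_2}(\rd)\to\mathcal M^{t_1,t_2}(\rd)$ with the estimate \eqref{stimaToft}; uniqueness of the extension is automatic, because $\cS(\rd)$ is by definition dense in $\mathcal M^{r_1,r_2}(\rd)$. I would also remark that this route in fact delivers strictly more than Theorem~\ref{psidomodtoft}, since \eqref{indiceq} is weaker than the constraint $t_1,t_2\leq p$ in \eqref{settoft}; thus Theorem~\ref{Charpseudo} subsumes and extends Toft's statement. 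Should a self-contained argument be preferred, one could instead reprove \eqref{stimaToft} directly by multilinear complex interpolation, exactly as in the proof of Theorem~\ref{fiointerpolation1}, among the anchor estimates: $\sigma\in M^{\infty,1}$ gives continuity of $T$ on each equal-index space $\mathcal M^{r,r}$ (Theorem~\ref{cont} with $\Phi=x\cdot\o$), $\sigma\in M^{\infty}$ gives $\mathcal M^{1}\to\mathcal M^{\infty}$ (Proposition~\ref{Minfty}), and $\sigma\in M^{1}$ gives $\mathcal M^{\infty}\to\mathcal M^{1}$ (Proposition~\ref{M1}). Either way, the only point that is not a direct invocation of an already-established result is the elementary index bookkeeping of the previous paragraph, so I do not anticipate any real obstacle.
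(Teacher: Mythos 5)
Your argument is correct. It is worth noting, however, that the paper itself gives no proof of Theorem~\ref{psidomodtoft}: it is stated purely as a quotation of Toft's result \cite[Theorem~4.3]{Toft}, rephrased in the paper's notation, and serves only as a benchmark against which the paper's own Theorem~\ref{Charpseudo} is compared. So your route is necessarily different from "the paper's proof" — what you have done is rederive Toft's theorem as a corollary of the paper's Theorem~\ref{boundedphasegradient}. This is logically sound and non-circular, since Theorem~\ref{boundedphasegradient} rests on Theorem~\ref{cont}, Propositions~\ref{Minfty} and~\ref{M1}, and complex interpolation, none of which invoke Toft's result. Your index bookkeeping is exactly right: the equalities in \eqref{settoft} trivially give \eqref{indicitutti}, $q\leq t_1,t_2$ is explicit, and $1/r_i=1/t_i+1-1/p-1/q\geq 1-1/q$ (using $1/t_i\geq 1/p$) yields $q\leq r_i'$, hence \eqref{indiceq}. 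In fact this computation is precisely what substantiates the paper's otherwise unproved assertion that conditions \eqref{indicitutti}--\eqref{indiceq} "enlarge" \eqref{settoft}, so your write-up adds genuine value. Two minor caveats: the strength of your conclusion is only as good as Theorem~\ref{boundedphasegradient}, whose proof the paper merely sketches by analogy with Theorem~\ref{fiointerpolation1}; and when $p=\infty$, $1<q<\infty$, the symbol class should be read as $\cM^{\infty,q}$ rather than $M^{\infty,q}$, per the paper's own standing remark — your statement inherits that convention.
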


We can now prove our main result concerning psedodifferential operators, stated in the introduction.
\begin{proof}[Proof of
Theorem \ref{Charpseudo}.]

(i) \emph{Sufficient conditions}. We observe that the assumptions of Theorem
\ref{boundedphasegradient} are satisfied when $\Phi\phas=x\cdot\eta$ and the result  follows immediately.\\
(ii) \emph{Necessary conditions}. We now assume \eqref{stimaA} and want to show that \eqref{indicitutti} and \eqref{indiceq} hold.

For $\lambda>0$, consider the families of pseudodifferential operators of Kohn--Nirenberg form $T_\lambda$, having phase function $\Phi$ and symbol $\sigma_\lambda=\f_{\lambda/\sqrt {2}} \otimes \f_{1/\lambda}$, with $\f(x)=e^{-\pi |x|^2}$ and $\f_{\lambda}(x) = \f(\lambda x)$.
Observe that the behavior of these symbols is expressed by \eqref{symbolbehaviour2} in the proof of Proposition \ref{necessary0}.
By assumption we have
\begin{equation}\label{assumption4}
\|T_\lambda \f_\lambda\|_{\mathcal{M}^{t_1,t_2}}\lesssim \|\sigma_\lambda\|_{\mathcal M^{p,q}}\|\f_\lambda\|_{\mathcal{M}^{r_1,r_2}}.
\end{equation}
Since $T_\lambda\f_\lambda(x)=2^{-d/2}e^{-\pi\lambda^2|x|^2}$ we obtain
by \eqref{gaussdilmod}
\begin{align}\label{resultbehaviour1}
\|T_\lambda\f_\lambda\|_{\mathcal M^{t_1,t_2}(\rd)}
& \asymp\left\{\begin{array}{ll}
\lambda^{-\frac {d}{t_1}}, & \lambda\to 0, \\
\lambda^{-\frac {d}{t_2'}}, & \lambda\to +\infty.
\end{array}
\right.
\end{align}
Combining \eqref{assumption4}, \eqref{resultbehaviour1} with \eqref{symbolbehaviour2}
and \eqref{gaussdilmod}, we obtain for $\lambda \to 0$
\begin{equation}\nonumber
\frac 1{r_1}-\frac 1 {t_1}\geq 1-\frac 1p-\frac 1q,
\end{equation}
whereas letting $\lambda\to+\infty$ gives
\begin{equation}\nonumber
\frac 1{r_2}-\frac 1 {t_2}\geq 1-\frac 1p-\frac 1q.
\end{equation}
This proves \eqref{indicitutti}.

In order to prove \eqref{indiceq} we set
$h_\lambda(x) = h(x) e^{- \pi i \lambda |x|^2}$ for $\lambda \geq 1$, where $h \in
C_c^\infty(\R^d) \setminus \{0\}$, $h \geq 0$, $h$ even, and the parameter-dependent symbol
$\sigma_\lambda=h \otimes h_\lambda$.
If we set $f_\lambda=\mathcal F^{-1} (\overline h_\lambda)$, the operator
with symbol $\sigma_\lambda$ acting on $f_\lambda$
is $T f_\lambda (x) = h(x) \, \mathcal F^{-1} h^2 (x)$. The same argument as in the proof of Proposition \ref{necessary0}
 gives $q \leq r_1'$.

If we switch quantization from the Kohn--Nirenberg $\sigma(x,D)$
to the Weyl quantization $\sigma^w(x,D)$, according to $\sigma(x,D)=a^w(x,D)$, then
we have $\| \sigma \|_{M^{p,q}} \asymp \| a \|_{M^{p,q}}$ (cf.
\cite[Remark~1.5]{Toft}). This fact, in combination with the Weyl
quantization formula
$$
\la \sigma^ w(x,D) f,g\ra_{L^2} = \la f, \overline{\sigma}^w(x,D) g\ra_{L^2}, \quad f,g
\in \mathcal S(\R^d),
$$
allows us to conclude that the assumption \eqref{stimaA} implies
the dual result
\begin{equation}\nonumber
\| T f \|_{\mathcal M^{r_1',r_2'}} \lesssim \| \sigma \|_{M^{p,q}}
\| f \|_{\mathcal M^{t_1',t_2'}}.
\end{equation}
Reasoning as above we now obtain $q \leq t_1$, which proves $q
\leq \min(t_1,r_1')$.

It remains to show $q\leq\min(t_2,r'_2)$.
We use the same arguments as in \cite[Theorem 5.2]{corderonicola4}. Let
us write down the details for the benefit of the reader. We
consider the Weyl quantization $\sigma^w(x,D)$ and conjugate the operator $\sigma^w(x,D)$ with the
Fourier transform. Then $\Fur^{-1}
\sigma^w(x,D) \Fur = (\sigma \circ \chi)^w(x,D)$, where $\chi(x,\o)=(\o,-x)$. Moreover, the
map $\sigma \mapsto \sigma \circ\chi$ is an isomorphism of $M^{p,q}$, so
that \eqref{stimaA} is equivalent to
\begin{equation*}
\| T f \|_{W(\cF L^{t_1},L^{t_2})} \lesssim \| \sigma \|_{M^{p,q}}
\| f \|_{W(\cF L^{r_1},L^{r_2})} \quad \forall \sigma\in\cS(\rdd)
\quad \forall f\in\cS(\rd),
\end{equation*}
where we switch back to the Kohn--Nirenberg form of the operator.
Now we test the last estimate on the same families of symbols
$\sigma_\lambda = h \otimes h_\lambda$ and functions
$f_\lambda = \mathcal F^{-1} (\overline h_\lambda)$ as specified above.
Observe that, since the functions $f_\lambda$ have
Fourier transforms supported in a fixed compact set,
we have by Lemma \ref{lloc} (i) $\| f_\lambda \|_{\cW( \cF L^{r_1},L^{r_2})} = \| \hat f_\lambda \|_{\cM^{r_1,r_2}} \asymp \|f_\lambda\|_{L^{r_2}}$. Hence we get $q \leq r_2'$.
By duality we finally obtain $q\leq t_2$.
\end{proof}

Observe that, for $r_1=r_2$ and $t_1=t_2$, Theorem  \ref{Charpseudo} gives
the sharpness of Theorem \ref{fiointerpolation1}.

Let us conclude by a comparison between our results and those of Theorem \ref{psidomodtoft} (\cite[Theorem~4.3]{Toft}).

The index set corresponding to \eqref{indicitutti} and \eqref{indiceq} is
larger than the index set corresponding to \eqref{settoft}.
For simplicity, let us draw a picture for the
particular case $t_i=r_i$, $i=1,2$. In this case \eqref{settoft}
reduces to $q\leq t_1,t_2 \leq q'=p$,  whereas \eqref{indicitutti} and
\eqref{indiceq} become $q\leq\min(p',t_1,t_2,t_1',t_2')$.
Projections of the set of points
$(1/p,1/q,1/t_1,1/t_2)$ that satisfy $q\leq t_1,t_2 \leq q'=p$ and $q\leq\min(p',t_1,t_2,t_1',t_2')$, respectively, onto the $(1/p,1/q)$-plane and onto the
$(1/q,1/t_i)$-plane, $i=1,2$, are shown in Figures $1$ and $2$,
respectively. We see that the range of exponents specified by \eqref{indicitutti} and
\eqref{indiceq} widens the exponents specified by \eqref{settoft} in the
$(1/p,1/q)$-plane, whereas  the range of exponents in the $(1/q,1/t_i)$-plane remains the same.

\vspace{1.2cm}
 \begin{center}
           \includegraphics{figindicisimboli.1}
            \\
           $ $
\end{center}
 \begin{center}{\quad\quad\quad \quad\quad \,\,(a)\hfill \quad\quad  (b)}\quad\quad\quad\quad\quad\quad\quad\quad
           \end{center}
         \smallskip
           \begin{center}{ Figure 1. The range of exponents $(1/p,1/q)$ when $t_i=r_i$, $i=1,2$:\\ (a) The range in \eqref{settoft},  (b)  The new range in \eqref{indicitutti} and \eqref{indiceq}. }
           \end{center}
 \begin{center}
           \includegraphics{indiciqt.1}
            \\
           $ $
\end{center}
\begin{center}{ Figure 2. The range of exponents $(1/q,1/r_i)$ when $t_i=r_i$, $i=1,2$: \eqref{settoft}, and  \eqref{indicitutti}, \eqref{indiceq}, coincide. }
           \end{center}
  \vspace{1.2cm}

\end{document}